\DeclarePairedDelimiter\floor{\lfloor}{\rfloor}
\theoremstyle{plain}
\newtheorem{theorem}{Theorem}[section]
\newtheorem{lemma}[theorem]{Lemma}
\newtheorem{corollary}[theorem]{Corollary}
\newtheorem{prop}[theorem]{Proposition}
\newtheorem{defn}[theorem]{Definition}
\newtheorem{remark}[theorem]{Remark}
\numberwithin{equation}{section}
\newcommand*{\medcup}{\mathbin{\scalebox{1.5}{\ensuremath{\cup}}}}
\newcommand{\twobar}{/\kern-0.5em/}
\newcommand{\threebar}{/\kern-0.2em/\kern-0.2em/}
\newcommand\norm[1]{\left\lVert#1\right\rVert}
\newcommand{\tr}{\mathrm{tr} \,}
\newcommand{\ev}{\mathrm{ev}}
\newcommand{\pt}{\mathrm{pt}}
\newcommand{\bN}{\mathbb{N}}
\newcommand{\bP}{\mathbb{P}}
\newcommand{\bS}{\mathbb{S}}
\newcommand{\bT}{\mathbf{T}}
\newcommand{\Z}{\mathbb{Z}}
\newcommand{\Q}{\mathbb{Q}}
\newcommand{\R}{\mathbb{R}}
\newcommand{\C}{\mathbb{C}}
\newcommand{\fm}{\mathfrak{m}}
\newcommand{\one}{\mathbf{1}}
\newcommand{\fd}{\mathfrak{d}}
\newcommand{\fs}{\mathfrak{s}}
\newcommand{\ff}{\mathfrak{f}}
\newcommand{\forget}{\mathfrak{forget}}
\newcommand{\val}{\mathrm{val}}
\newcommand{\Crit}{\mathrm{Crit}}
\newcommand{\into}{\hookrightarrow}
\newcommand{\Image}{\mathrm{Im}}
\newcommand{\cM}{\mathcal{M}}
\newcommand{\cI}{\mathcal{I}}
\newcommand{\Fuk}{\mathrm{Fuk}}
\newcommand{\wT}{\triangledown}
\newcommand{\blT}{\blacktriangledown}
\newcommand{\wunit}{\bm{1}^{\triangledown}}
\newcommand{\bunit}{\bm{1}^{\blacktriangledown}}
\newcommand{\gunit}{\bm{1}^{\textcolor{gray}{\blacktriangledown}}}
\newcommand{\wlambda}{\bm{\lambda}^{\triangledown}}
\newcommand{\blambda}{\bm{\lambda}^{\blacktriangledown}}
\newcommand{\glambda}{\bm{\lambda}^{\textcolor{gray}{\blacktriangledown}}}
\newcommand{\Spec}{\operatorname{Spec}}
\newcommand{\re}{\operatorname{Re}}
\newcommand*\bigcdot{\mathpalette\bigcdot@{.5}}
\newcommand*\bigcdot@[2]{\mathbin{\vcenter{\hbox{\scalebox{#2}{$\m@th#1\bullet$}}}}}
\def\underbrace#1{\@ifnextchar_{\tikz@@underbrace{#1}}{\tikz@@underbrace{#1}_{}}}
\def\tikz@@underbrace#1_#2{\tikz[baseline=(a.base)] {\node (a) {\(#1\)}; \draw[ultra thick,line cap=round,decorate,decoration={brace,amplitude=5pt}] (a.south east) -- node[below,inner sep=7pt] {\(\scriptstyle #2\)} (a.south west);}}
\begin{document}
	\title{$T$-equivariant disc potential and SYZ mirror construction}

\author{Yoosik Kim}
\address{Department of Mathematics, Pusan National University, Busan, South Korea}
\email{yoosik@pusan.ac.kr}

\author{Siu-Cheong Lau}
\address{Department of Mathematics and Statistics, Boston University, 111 Cummington Mall, Boston MA 02215, USA}
\email{lau@math.bu.edu}

\author{Xiao Zheng}
\address{The Institute of Mathematical Sciences, The Chinese university of Hong Kong, Shatin, Hong Kong}
\email{xiaozh259@gmail.com}
	
	\begin{abstract}
		We develop a $G$-equivariant Lagrangian Floer theory and obtain a curved $A_\infty$ algebra, and in particular a $G$-equivariant disc potential.  We construct a Morse model, which counts pearly trees in the Borel construction $L_G$.  When applied to a smooth moment map fiber of a semi-Fano toric manifold, our construction recovers the $T$-equivariant toric Landau-Ginzburg mirror of Givental. We also study the $\bS^1$-equivariant Floer theory of a typical singular fiber of a Lagrangian torus fibration (i.e. a pinched torus) and compute its $\bS^1$-equivariant disc potential via the gluing technique developed in \cite{CHL18,HKL}. 
	\end{abstract}
	
	\maketitle
	\tableofcontents
	
	\section{Introduction}
	\label{sec:intro}


In this paper, we develop an equivariant version of the SYZ mirror construction using equivariant Lagrangian Floer theory. For a $G$-inavariant Lagranguan submanifold of a Hamiltonian $G$-manifold $X$, and $T\subset G$ the maximal torus, the $T$-equivariant disc potential gives a holomorphic map from the mirror $X^{\vee}$ to the complexified dual torus $T^{\vee}_{\C}$. 

Let us consider the well-known Landau-Ginzburg mirror of a compact Fano toric $d$-fold $X$ \cite{HV,givental_icm, Givental,LLY3}. It is given by a pair $((\C^{\times})^d,W)$, where the superpotential $W \colon (\C^{\times})^d\to\C$ is a holomorphic function of the form
	\[
	W(z)=\sum_{i=1}^m q^{\alpha_i}\vec{z}^{v_i},
	\]        
	where $v_1,\ldots,v_m\in\bm{N}\cong \Z^d$ are primitive generators of the one-dimensional cones of the fan $\Sigma$ defining $X$, $\vec{z}^{v_i}$ denote the corresponding Laurent monomials, and $q^{\alpha_1},\ldots,q^{\alpha_m}$ are Kähler parameters associated to effective curves classes 
	$\alpha_i\in H_2(X;\Z)$. 
	
In \cite{Givental},  Giventa showed that the equivariant quantum cohomology $QH_{T}^{\bullet}(X)$ of $X$ is mirror to the \emph{equivariant superpotential}
	\[
	W_{\lambda}(z)=W(z)+\sum_{i=1}^d \lambda_i \log{z_i},
	\]   
	where $\lambda_1,\ldots,\lambda_d$ are the equivariant parameters for the torus $T$ acting on $X$. Iritani \cite{IR17,Iritani-big} further generalized this to a mirror correspondence between big equivariant quantum cohomology and a universal Landau-Ginzburg potential which has both equivariant and bulk-deformation parameters. Cho-Oh \cite{CO} (in the Fano case) and Fukaya-Oh-Ohta-Ono \cite{FOOO-T} (in the general compact non-Fano setting) used Lagrangian Floer theory to define the disc potential of a smooth fiber of the toric moment map and showed that it coincides with the Landau-Ginzburg superpotential $W$. From this perspective, $W$ is viewed as the generating function of open Gromov-Witten invariants. Each term in $W$ corresponds to a holomorphic disc of Maslov index two bounded by the torus fiber. This gives an SYZ interpretation of the Landau-Ginzburg mirrors \cite{Auroux07, CL}. 
	
	It is a natural and interesting question that whether the $W_{\lambda}$ can also be interpreted as a generating function of equivariant disc counts, to which we give an affirmative answer to in this paper.
	
	\begin{theorem}[Corolloary \ref{cor:tor}] \label{thm:main}
		Let $X$ be a semi-projective and semi-Fano toric manifold with $\dim_{\C} X=d$, and let $T\subset (\bS^1)^d$ be the subtorus determined by the integral basis $u_1,\ldots,u_{\ell} \in \bm{N}\cong \Z^d$. The $T$-equivariant disc potential (\ref{def:G_disc_potential}) $W^L_{T}$ of a regular fiber $L$ of the toric moment map is given by
		\[
		W^L_{T}= \sum_{i=1}^m  \exp(g_i(\check{q}(q))) \bT^{\omega(\beta_i)} \exp (v_i \cdot (x_1,\ldots,x_d))+\sum_{j=1}^{\ell} (u_j \cdot (x_1,\ldots,x_d)) \lambda_j
		\]
		where $\beta_i$ are the basic disc classes bounded by the toric fiber, $g_i(\check{q}(q))$ is given by the inverse mirror map in equation \eqref{eqn:funcn_g}, $q^{\alpha} = \bT^{\omega(\alpha)}$ are the K\"ahler parameters, and $\bT$ is the formal Novikov variable.
	\end{theorem}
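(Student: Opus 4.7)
The proof specializes the $T$-equivariant Morse/pearly-tree model for the disc potential (constructed in the earlier sections) to the regular moment-map fiber $L \cong (\bS^1)^d$ in the semi-Fano toric manifold $X$. The equivariant cohomology decomposes as $H^*_T(L;\Lambda) \cong H^*(L;\Lambda) \otimes_\Lambda \Lambda[\lambda_1,\ldots,\lambda_\ell]$, and the formal coordinates $x_1,\ldots,x_d$ record weak bounding cochains along a basis of $H^1(L;\Lambda)$. The potential $W^L_T$ is then extracted as the coefficient of the fundamental chain in the output of the resulting curved $A_\infty$-algebra after inserting these parameters.

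The plan is to classify contributions to $W^L_T$ by the total disc class $\beta \in H_2(X,L;\Z)$ appearing in the pearly tree. For the trivial class $\beta = 0$, the pearly trees reduce to purely Morse-theoretic contributions in the Borel construction $L_T = L \times_T ET$. Since $T \subset (\bS^1)^d$ acts on $L$ by translation along the subtorus spanned by $u_1,\ldots,u_\ell$, the classifying map $L_T \to BT$ pulls back each $\lambda_j \in H^2(BT)$ to a class in $H^2_T(L)$ whose pairing against the flat-connection coordinate $x$ produces $u_j \cdot x$. Summing over $j$ gives the linear piece $\sum_{j=1}^\ell (u_j \cdot x)\,\lambda_j$.

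For $\beta \neq 0$, the contributions come from Maslov-index $\ge 2$ stable disc configurations bounded by $L$. By Cho-Oh, the Maslov-$2$ classes bounded by $L$ are exactly the basic disc classes $\beta_i$ dual to the primitive generators $v_i$, contributing monomials proportional to $\bT^{\omega(\beta_i)} \exp(v_i \cdot x)$. In the semi-Fano setting (non-Fano) additional contributions from sphere-bubbling along Maslov-$0$ classes are absorbed by the Chan-Lau-Leung-Tseng inverse mirror map, yielding the factor $\exp(g_i(\check q(q)))$. The $T$-action on the moduli spaces $\cM(\beta_i)$ is compatible with the $T$-action on $L$ via the boundary evaluation map, and this forces the equivariant count to agree with the ordinary Cho-Oh/Fukaya-Oh-Ohta-Ono count.

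The main obstacle will be rigorously ruling out $\lambda$-dependence in the $\beta \neq 0$ sector, i.e.\ showing that no cross-terms mixing positive powers of $\lambda_j$ with nontrivial disc classes survive the pearly-tree count. This requires a careful analysis of the equivariant virtual moduli of pearly trees with nontrivial disc bubbles, using either an equivariant localization argument on the compactified moduli or the explicit $T$-compatible Kuranishi structure, to show that only the trivial $\lambda$-weight contributes outside the $\beta = 0$ sector. Once this vanishing is established, summing the two sectors yields the claimed closed form for $W^L_T$.
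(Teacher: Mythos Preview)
Your overall architecture matches the paper's: separate the potential into the $\beta=\beta_0$ sector (Morse-theoretic, producing the linear $\lambda$-terms) and the $\beta\neq\beta_0$ sector (non-equivariant disc count, identified via \cite{CLLT} with the inverse mirror map). However, you have inverted where the difficulty lies.

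What you flag as the ``main obstacle'' --- ruling out $\lambda$-dependence in the $\beta\neq\beta_0$ sector --- is in fact immediate by degree. The inputs $b$ have degree $1$, so $\fm^T_{k,\beta}(b,\ldots,b)$ lands in degree $2-\mu(\beta)$. The generators $\bm{\lambda}_i$ sit in degree $2$, hence only $\mu(\beta)=0$ classes can output them; since a semi-Fano toric fiber has minimal Maslov index $2$, this forces $\beta=\beta_0$. No equivariant localization or special Kuranishi analysis is needed. This is exactly the first line of the proof of Lemma~\ref{lem:h_i}.

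Conversely, the two pieces you pass over quickly are where the actual work is. First, for the $\beta_0$ sector, the assertion that the Morse contribution is precisely $\sum_j (u_j\cdot x)\lambda_j$ is not just a cohomological pairing statement: one must count gradient flow trees in the approximation $L(1)$ and show that (i) there is a unique flow line from $X_i\otimes \bm{1}_{BT}$ to $\bm{1}_L\otimes\lambda_i$ (Proposition~\ref{prop:uniqueflowline}, an explicit ODE analysis), and (ii) all higher $\fm^T_k(b,\ldots,b)$ with $k\geq 2$ contribute zero to $\bm{\lambda}_i$, which the paper handles by $\bS^1$-equivariant perturbations of the unstable hypertori (Lemma~\ref{lem:h_i}). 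Your one-sentence appeal to the classifying map does not supply this. Second, for $\beta\neq\beta_0$, obtaining the exact exponential $\exp(v_i\cdot x)$ (as opposed to some other function with the same leading term) requires a specific choice of multisection on $\cM_{k+1}(\beta_i+\alpha;L;X_{\mu_1},\ldots,X_{\mu_k})$ built from the forgetful map and symmetrized perturbations of the hypertori, so that the combinatorics produce the factorial $1/k!$ (Theorem~\ref{thm:superpotential} and Remark~\ref{rmk:perturbation}). Simply citing Cho--Oh for the Maslov-$2$ classification does not give this.
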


	In the Fano case, we have $g_i=0$. By taking ${\ell}=d$ and $u_1,\ldots,u_d$ to be the standard basis of $\mathfrak{t}^d:=\mathrm{Lie}(T)$, and setting $x_i = \log z_i$ for $i=1,\ldots,d$, the above expression for $W^L_{T}$ equals to $W_{\lambda}$.  
	
	
	 We also expect the $T$-equivariant disc potential to be useful in the study of mirror symmetry in $3$ dimensions. Teleman \cite{Tel} proposed that pure topological gauge theory in $3$ dimensions for a compact Lie group $G$ is equivalent the Rozansky-Witten theory for the holomorphic symplectic variety $BFM(G_{\C}^{\vee})$ \cite{BFM}. In this conjectural equivalence, one associates to each compact Hamiltonian $G$-manifolds $X$, a holomorphic family $X^{\vee}$ over a holomorphic Lagrangian $\mathbb{L}_G(X)$ in $BFM(G_{\C}^{\vee})$. Here $X^{\vee}$ is a complex manifold mirror to $X$ in the sense of homological mirror symmetry.  The recipe in \cite{Tel} for constructing $\mathbb{L}_G(X)$ is to realize the equivariant quantum cohomology $QH_G^{\bullet}(X)$ as a $H^G_*(\Omega G)$-module via an equivariant moment correspondence \footnote{In a recent paper \cite{GMP22}, Gonz\'{a}lez, Mak and Pomerleano realized the $H^G_*(\Omega G)$-module structure of $QH_G^{\bullet}(X)$ using the theory of shift operators, in the monotone setting, bypassing the use of Lagrangian correspondence.}, and the latter can be identified with the coordinate ring of $BFM(G_{\C}^{\vee})$. The holomorphic Lagrangian $\mathbb{L}_G(X)$ is given by the support of the module $QH_G^{\bullet}(X)$ (see \cite[Remark~2.1]{teleman18} for the Lagrangian property of $QH_G^{\bullet}(X)$).
	 
	 The expected relation between $T$-equivariant disc potential and the holomorphic Lagrangian $\mathbb{L}_G(X)$ is the following. Let $L$ be a $G$-invariant Lagrangian, and let $T\subset G$ be the maximal subtorus. Assuming that $L$ is weakly unobstructed and has nonnegative minimal Maslov index, it follows from Proposition \ref{prop:gen-m0} that
	 the $T$-equivariant disc potential for $L$ takes the form 
	\[
	W^L_{T}(b)= W(b) + \sum_{i=1}^{\ell} h_i(b) \lambda_i, 
	\]
	for degree one weak bounding cochains $b$. The equivariant term defines a holomorphic map $\rho \colon \mathcal{MC}(L)\to  T_{\C}^{\vee}$, 
	\[
	\rho(b):=(e^{h_1(b)},\ldots,e^{h_{\ell}(b)})
	\] 
	from the weak Maurer-Cartan space $\mathcal{MC}(L)$ of $L$ to $T_{\C}^{\vee}$ (assuming the base field is $\C$ and the functions $h_i$ are convergent over $\C$). Following \cite{teleman18}, we can “push down” the non-equivariant term $W$ to a multi-valued function $\psi$ on $T_{\C}^{\vee}$ whose multi-values are the critical values of $W$ along the fibers of $\rho$. The graph of the differential $d\psi$ defines a holomorphic Lagrangian $\mathbb{L}_{T}(X)$ in $BFM(T_{\C}^{\vee})= T^*T^{\vee}_{\C}$. The BFM space $BFM(G_{\C}^{\vee})$ an affine resolution of singularities of the quotient $T^*T^{\vee}_{\C}/\bm{W}$, where $\bm{W}$ denotes the Weyl group. It is natural to expect that $\mathbb{L}_{T}(X)$ is $\bm{W}$-invariant and can be lifted to the holomorphic Lagrangian $\mathbb{L}_{G}(X)$ in $BFM(G_{\C}^{\vee})$\footnote{We learned the idea of constructing $\mathbb{L}_{G}(X)$ using only the action of the maximal torus from Naichung Conan Leung.}.
	
	
	
	\begin{remark}
		The terms $h_i(b) \lambda_i$ can be understood as obstruction terms for a homotopy equivalence between the equivariant Floer theory of $L \subset X$ and Floer theory of the quotient $L/T \subset X\twobar_\lambda T^{\ell}$.  Daemi-Fukaya \cite{DF17} asserted that this homotopy equivalence can be realized by a Lagrangian correspondence.  In our setting, we can compute the mirror equations $h_i(b)=0$ explicitly.
		
		The mirror map $c(\lambda)$ is crucial to precisely identify which fiber in the mirror family corresponds to $X\twobar_\lambda T^{\ell}$.  By the beautiful work of Woodward-Xu \cite{WX}, the mirror map can be understood as the change from the gauged Floer theory of $X$ to the Fukaya category $\Fuk(X\twobar_\lambda T^{\ell})$ downstairs.  Gauged Floer theory is formulated in terms of vortex equations.  It would be very interesting to investigate the relation with our formulation.
	\end{remark}
	
	Equivariant Lagrangian Floer theory has seen substantial recent developments.  
	In the exact setting and $G=\Z/2\Z$, Seidel-Smith \cite{SS10} provided an approach to understanding $G$-equivariant Floer theory by combining Lagrangian Floer theory and family Morse theory \cite{hutchings08} on $EG\to BG$.  Viterbo \cite{viterbo} illustrated some related ideas for $G=\bS^1$.
	Hendricks-Lipshitz-Sarkar \cite{HLS16a,HLS16b} developed a homotopy coherent method to build up a $G$-equivariant Floer theory. Daemi-Fukaya \cite{DF17} used G-equivariant Kuranishi structure \cite{G-Kuranishi} to tackle the G-equivariant transversality problem, and make a formulation using differential forms.  Bao-Honda \cite{BH18} defined equivariant Lagrangian Floer cohomology for finite group action via semi-global Kuranishi structures. Mirror Lagrangian objects of equivariant vector bundles was studied by Lekili-Pascaleff \cite{Lekili-Pascaleff}. There are many interesting works related to this subject such as \cite{Seidel-lect}. 
	
	In Section \ref{sec:Morse}, we construct a Morse model for $G$-equivariant Lagrangian Floer theory by counting pearly trees  \cite{BC-pearl,FOOO-can} in the Borel construction $L_G$, and study the associated equivariant disc potential.  The key ingredient is a collection of Morse models on the finite dimensional approximations of $L_G$ satisfying certain compatibility conditions (Proposition \ref{prop:extension}).
	This is closest to the approach of Seidel-Smith, although the Lagrangians under our consideration are not exact. The Lagrangians of interest in this paper bound non-constant pseudo-holomorphic discs of non-negative Maslov index. These are known as quantum corrections in mirror symmetry, which can have non-trivial contributions to the equivariant parameters in the disc potential. Since there are only finitely many generators for each finite dimensional approximation of $L_G$, it better serves for computations and for the mirror construction.

	In order for the equivariant paramaeters appearing in $W_{\lambda}$ to manifest. In Section \ref{sec:partial_units}  we adapt the homotopy unit construction developed by Fukaya-Oh-Ohta-Ono \cite[Chapter 7]{FOOO} to construct a curved $A_{\infty}$-algebra 
	\[
	 \left(C_{\textrm{Morse}}^{\bullet}(L;\Lambda_0)\otimes_{\Lambda_0} H^{\bullet}_G(\pt;\Lambda_0), \{\fm^G_k\}_{k\ge 0} \right),
	\] 
	such that the equivariant paramters are partial units (Definition \ref{def:partial_unit}) and consequently the $A_{\infty}$-maps $\fm^G_k$ are $H^{\bullet}_G(\pt;\Lambda_0)$ multi-linear  (Theorem \ref{thm:pull-out-lambda}), 
	under the assumption that $L$ has minimal Maslov index zero and $G$ is a product of unitary groups. 
	Here $\Lambda_0$ is the Novikov ring
	\[
	\Lambda_0=\left\{\sum_{i=0}^{\infty} a_iT^{A_i}|a_i\in \mathrm{k}, A_i\in\R_{\ge 0}; A_i \text{ increases to } +\infty,\right\}, 
	\]	
	where $\mathrm{k}$ is a field of charateristic $0$. This allows us to view $H^{\bullet}_G(\pt;\Lambda_0)$ as a graded coefficient ring. We note that the terms of the form $\fm^G_k(x_1\otimes 1,\ldots x_k\otimes 1)$ are series in the equivariant parameters and receive non-trivial contributions from pearly trees in $L_G$.

	In Section \ref{sec:imm}, we study the $\bS^1$-equivariant Floer theory of the immersed two-sphere with a single nodal point.  This is also known as the pinched two-torus and is the most common singular fiber appearing in an Lagrangian torus fibration. Even although it does not bound any non-constant holomorphic discs of Maslov index zero, it still has a non-trivial equivariant disc potential from the contribution of constant polygons at the nodal point. As the corresponding moduli spaces have non-trivial obstructions, the gluing technique via the isomorphism in the Fukaya category between smooth and pinched tori in \cite{HKL} is crucial in the computation of the explicit expression of the equivariant disc potential.  

	\begin{theorem}[Theorem \ref{thm:equiv-imm}] \label{thm:intro-equiv-imm}
		The $\bS^1$-equivariant disc potential of the immersed sphere $L_0$ is
		\[
		W^{L_0}_{\bS^1} = \log (1-uv) = -\sum_{j=1}^\infty \frac{(uv)^j}{j}
		\]
		where $(u,v) \in \Lambda_0^2$, $\val(uv)>0$ are the formal deformation parameters corresponding to the degree one immersed generators of $L_0$.
	\end{theorem}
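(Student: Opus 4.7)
The plan is to compute $W^{L_0}_{\bS^1}$ by combining a direct analysis of constant-polygon contributions at the node with the gluing technique of \cite{CHL18,HKL}, which relates $L_0$ to a nearby smooth SYZ torus fiber $L$. Since $L_0$ bounds no non-constant holomorphic discs of Maslov index at most two, the disc potential is produced entirely by constant polygons at the nodal point, together with the equivariant corrections encoded in pearly trees in the Borel construction $(L_0)_{\bS^1}$.

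First I would fix notation: let $X, Y$ denote the two degree-one immersed generators at the node, with dual formal parameters $u, v$. A weak bounding cochain $b = uX + vY$ inserts copies of $uX + vY$ into the $A_\infty$-operations, and the branch-matching condition at the node forces the only surviving contributions to come from constant stable polygons whose boundary corners alternate between $X$ and $Y$. For each $j \geq 1$ there is one such constant $2j$-gon; as a stable map it carries a cyclic $\mathbb{Z}/j$ automorphism, which should be the geometric source of the factor $1/j$ in the expected expansion $-\sum_{j\geq 1}(uv)^j/j$.

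The main obstacle is that these constant-polygon moduli are obstructed, and in the equivariant setting the pearly-tree moduli in $(L_0)_{\bS^1}$ acquire additional contributions from the $\bS^1$-fixed locus over the node that cannot be counted by direct transversality arguments. To bypass this, I would apply the gluing technique of \cite{HKL}: the immersed sphere $L_0$ is quasi-isomorphic in the (equivariant) Fukaya category to a smooth torus fiber $L$ close to the singular fiber, via an explicit $A_\infty$-functor whose effect on deformation parameters is a coordinate change relating the holonomy coordinate $z$ on $L$ to a function of the product $uv$. This reduces the problem to the smooth-torus side, where standard transversality and the Morse model of Section \ref{sec:Morse} apply.

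On the smooth side, the $\bS^1$-equivariant disc potential of $L$ contains a Givental-type logarithmic correction of the form $\lambda\log z$ produced by pearly trees in $L_{\bS^1}$, in direct analogy with the toric computation behind Theorem \ref{thm:main}. Pulling this correction back through the HKL coordinate change yields $\log(1-uv)$, while the remaining non-equivariant Maslov-two disc contributions on $L$ cancel in the degenerate limit where $L$ collapses to $L_0$. Matching coefficients against the direct constant-polygon count then confirms that each constant $2j$-gon at the node contributes the expected $(uv)^j/j$ term with the appropriate sign, giving the identification $W^{L_0}_{\bS^1} = \log(1-uv) = -\sum_{j\geq 1}(uv)^j/j$.
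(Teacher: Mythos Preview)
Your core strategy---compute $W^{L_0}_{\bS^1}$ by passing through an isomorphism with a smooth torus fiber whose equivariant potential is already known---is the same as the paper's. However, several steps in your execution are either incorrect or circular.

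First, the claim that ``the remaining non-equivariant Maslov-two disc contributions on $L$ cancel in the degenerate limit where $L$ collapses to $L_0$'' is wrong. In the local model $X=\{ab=1+z\}$, the smooth tori $L_1,L_2$ bound \emph{no} non-constant holomorphic discs at all (by the maximum principle applied to $z$), so $W^{L_i}_{\bS^1}=\lambda x_i$ with vanishing non-equivariant part. There is nothing to cancel and no limiting argument is needed. Second, your final step---``matching coefficients against the direct constant-polygon count''---is circular: the entire point of invoking gluing is that the constant-polygon moduli at the node are obstructed and cannot be counted directly. The paper never performs or confirms such a count; the $1/j$ factors emerge purely from the Taylor expansion of $\log(1-uv)$ after the gluing argument.

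The paper's argument is also more direct than ``pulling back through a coordinate change.'' It establishes that $(L_0,\bS^1,b_0)$ and $(L_1,\bS^1,b_1)$ are isomorphic objects via explicit morphisms $\alpha_{10},\beta_{01}$ precisely when $uv=1-\exp(x_1)$ (this equivariant gluing formula coincides with the non-equivariant one of \cite{HKL} by a degree argument: $\lambda$ has degree two, so it cannot appear in the relevant $\fm_1^{\bS^1}$ and $\fm_2^{\bS^1}$ computations). Then the $A_\infty$-identity applied to $((\fm_1^{\bS^1})^{b_1,b_0})^2(\alpha_{10}\otimes 1)=0$ forces $W(u,v)=x_1$, hence $W(u,v)=\log(1-uv)$. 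Your heuristic about $\Z/j$ automorphisms of constant $2j$-gons is suggestive but plays no role in the actual proof.
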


	Floer theory for immersed Lagrangians was developed by Akaho-Joyce \cite{AJ}, which is in line with the theory of Fukaya-Oh-Ohta-Ono \cite{FOOO} for smooth Lagrangians.  In \cite{DET}, a different method via Legendrian topology is used to develop the Floer theory of immersed Lagrangian surfaces.
	
	In a subsequent work \cite{HKLZ19}, we compute the equivariant disc potential for Lagrangian immersions in toric Calabi-Yau manifolds.  They have very interesting expressions in relation with the Gross-Siebert slab functions \cite{GS07} and mirror maps of toric Calabi-Yau manifolds \cite{CLL,CCLT2}.  It is closely related to the topological vertex \cite{Katz-Liu,GZ,Fang-Liu,fang-liu-tseng,FLZ}.

	
	\addtocontents{toc}{\protect\setcounter{tocdepth}{1}}
	\subsection*{Acknowledgments}
	The authors would like to thank deeply anonymous referees for detailed and helpful comments.  
	The first and second named authors express their deep gratitude to Cheol-Hyun Cho for explaining his point of view of equivariant Floer theory using Cartan model.  The first named author was supported by the National Research Foundation of Korea (NRF) grant funded by the Korea government NRF-2021R1F1A1057739 and NRF-2020R1A5A1016126. The second named author also thanks Ben Webster and Kevin Costello for the discussion on Teleman's conjecture at Perimeter Institute.  His work is supported by Simons Collaboration Grant \#580648.
	\addtocontents{toc}{\protect\setcounter{tocdepth}{2}}
	
\section{A Morse model for Lagrangian Floer theory} \label{sec:MorseLag} 
	
	\subsection{The singular chain model}
	\label{sec:simplicial} 
 
Let $(X,\omega_X)$ be a symplectic manifold of real dimension $2d$. We assume that $X$ is convex at infinity or geometrically bounded if it is non-compact. Choose a compatible almost complex structure $J_X$. For a closed, connected, and relatively spin Lagrangian submanifold $L$, Fukaya-Oh-Ohta-Ono constructed a countably generated subcomplex $C^{\bullet}(L;\Lambda_0)$ of the singular chain complex $S^{\bullet}(L;\Lambda_0)$ (regarded as a cochain complex) and an $A_{\infty}$-algebra $(C^{\bullet}(L;\Lambda_0),\tilde{\fm})$.
We briefly recall their construction below, which will be used in constructing the equivariant Morse model in Section~\ref{sec:Morse}. We refer the reader to \cite{FOOO} for more details. 

For a relative homology class $\beta \in H_2(X,L;\Z)$, we denote by $\mathcal{M}(\beta;L)$ the moduli space of $J_X$-holomorphic stable maps from a bordered Riemann surface of genus $0$ representing $\beta$ and by $\mathcal{M}_{k+1}(\beta;L)$ the moduli space together with $k+1$ boundary marked points $z_0,\ldots,z_k$ ordered counter-clockwise. The moduli space $\mathcal{M}_{k+1}(\beta;L)$ has virtual dimension $\mu(\beta)+k-2+d$, where $\mu(\beta)$ is the Maslov index of $\beta$.  It carries the evaluation map 
\[
\ev \colon \mathcal{M}_{k+1}(\beta;L)\to L^k, \quad u \mapsto (u(z_1), \dots, u(z_k)).
\] 
Let $H_2^{\mathrm{eff}}(X,L)$ denote the monoid of effective classes in $H_2(X,L;\Z)$, represented by $J_X$-holomorphic maps, i.e.
\[
H_2^{\mathrm{eff}}(X,L)= \left\{\beta\in H_2(X,L;\Z) \mid \mathcal{M}(\beta;L)\ne\emptyset \right\}.
\]
For a $k$-tuple $\vec{P}=(P_1,\ldots,P_k)$ of smooth singular chains of $L$, we denote by $\mathcal{M}_{k+1}(\beta;L;\vec{P})$ the fiber product (in the sense of Kuranishi structures) of $\mathcal{M}_{k+1}(\beta;L)$ with $P_1, \dots, P_k$, i.e.,
\[
\mathcal{M}_{k+1}(\beta;L;\vec{P})=\mathcal{M}_{k+1}(\beta;L)\times_{L^k} \prod P_i.
\]

We recall the notion of a \emph{generation} $g \in \mathbb{Z}$, which will be used to determine perturbations of moduli spaces and subsets of singular chains inductively in \cite[Section 7]{FOOO}.
As preliminaries, for $\beta\in H_2^{\mathrm{eff}}(X,L)$, we set
\[
\norm{\beta}=\begin{cases}
\sup \left\{n \mid \exists \beta_1,\ldots,\beta_n\in H_2^{\mathrm{eff}}(X,L)\setminus \{\beta_0\} \text{ with } \sum_{i=1}^n \beta_i=\beta \right\}+\floor{\omega(\beta)}-1 & \text{if }\beta\ne \beta_0 ; \\
-1 & \text{if } \beta=\beta_0 
\end{cases}
\]
where $\beta_0$ is the constant disc class.
Also, we employ a function $\mathfrak{d} \colon \{1,\ldots,k\}\to \Z_{\ge 0}$ to keep track of generations of inputs.
For a pair $(\mathfrak{d},\beta)$ with $\beta\in H_2^{\mathrm{eff}}(X,L)$, we define
\[
\norm{(\mathfrak{d},\beta)}=\begin{cases}
\max_{i\in \{1,\ldots,k\}} \{\mathfrak{d}(i)\}+\norm{\beta}+k & \text{if } k\ne 0;\\
\norm{\beta} & \text{if } k=0.
\end{cases}
\]	

We start with a countable set $\mathcal{X}_{-1}(L)$ of smooth singular chains on $L$ such that
\begin{equation*} \label{eq:-1_gen	}
C_{(-1)}(L;\Lambda_0)= \Lambda_0 \cdot \mathcal{X}_{-1}(L)
\end{equation*}
is a subcomplex of $S^{\bullet}(L;\Lambda_0)$ isomorphic on cohomology.
For $g\ge 0$, we inductively choose a countable set $\mathcal{X}_g(L)$ of smooth singular chains on $L$ and a system of multisections $\fs_{\fd,k,\beta,\vec{P}}$ for $\mathcal{M}^{\fd}_{k+1}(\beta;L;\vec{P})$ satisfying $\norm{(\mathfrak{d},\beta)}=g$. Here the superscript $\fd$ is written to emphasize the generations of the inputs $P_i\in \mathcal{X}_{\fd(i)}(L)$.
At each inductive step, new multisections $\fs_{\fd,k,\beta,\vec{P}}$ are chosen to be transversal to the zero section and we extend the multisections previously chosen for the boundary strata $\partial\mathcal{M}_{k+1}(\beta;L;\vec{P})$. (To achieve transversality, the Kuranishi structures for the moduli spaces $\cM_{k+1}(\beta;L)$ are chosen to be weakly submersive.) Moreover, the zero locus
\begin{equation}\label{equ_frakks}
\mathcal{M}^{\fd}_{k+1}(\beta;L;\vec{P})^{\fs_{\fd,k,\beta,\vec{P}}}:=\fs_{\fd,k,\beta,\vec{P}}^{-1}(0)
\end{equation}
is triangulated, extending the triangulation on its boundary strata. The new simplices in the triangulation are then regarded as elements of $\mathcal{X}_{g}(L)$.  Additional singular simplices are added to $\mathcal{X}_{g}(L)$ so that 
\begin{equation} \label{eq:C_g}
C_{(g)}(L;\Lambda_0)=\bigoplus_{g'\le g} \Lambda_0 \cdot \mathcal{X}_{g'}(L)
\end{equation}
remains a subcomplex of $S^{\bullet}(L;\Lambda_0)$ isomorphic on cohomology.

Let $C^{\bullet}(L;\Lambda_0)=\lim_{\to} C_{(g)}(L;\Lambda_0)$. The $A_{\infty}$-structure map $\tilde{\fm}_k \colon C^{\bullet}(L;\Lambda_0)^{\otimes k}\to C^{\bullet}(L;\Lambda_0)$ is defined by 
\begin{equation*}
\tilde{\fm}_k(P_1,\ldots,P_k)=\sum_{\beta \in H_2^{\mathrm{eff}}(X,L)} \bT^{\omega_X(\beta)} \tilde{\fm}_{k,\beta}(P_1,\ldots,P_k)
\end{equation*}
where
\begin{equation*}
\tilde{\fm}_{k,\beta}(P_1,\ldots,P_k)=
\begin{cases}
0 &\mbox{for $(k,\beta)\ne (0,\beta_0)$}\\
(-1)^d \partial P &\mbox{for $(k,\beta)\ne (1,\beta_0)$}\\
(\ev_0)_*\left(\mathcal{M}_{k+1}(\beta;L;\vec{P})^{\fs}\right) &\mbox{otherwise.}
\end{cases}	
\end{equation*}
Here, $\partial$ is the coboundary map on $C^{\bullet}(L;\Lambda_0)$. The map
$\tilde{\fm}_{k,\beta}$ is of degree $2-k-\mu(\beta)$.


\begin{remark}
We note that it is only possible to choose finitely many multisections at once while still having them being sufficiently close to the original Kuranishi maps. To deal with this technical difficulty, Fukaya-Oh-Ohta-Ono introduced the notion of $A_{n,K}$-structure associated to $L$ using moduli space of pseudo-holomorphic discs with bounded energy and number of marked points in \cite[Chapter 7]{FOOO}. The $A_{\infty}$-structure $\tilde{\fm}$ on $C^{\bullet}(L;\Lambda_{0})$ is obtained from the $A_{n,K}$-structures via homological techniques. 
\end{remark}

\begin{remark}
In order to define a $\Z$-graded Floer cohomology, the conventional grading of the map $\mathfrak{m}_k$ is $2-k$. This means aside from the case of graded Lagrangian submanifolds, one has to define the Novikov ring $\Lambda_0$ using an extra grading parameter in order to compensate for the Maslov indices. In this paper, we work on the chain level and do not follow this convention. The grading of $\tilde{\mathfrak{m}}_{k,\beta}$ is crucial in understanding the vanishing of certain terms when doing computations with the Morse model.
\end{remark}

The singular chain model $(C^{\bullet}(L;\Lambda_0),\tilde{\fm})$ constructed in \cite{FOOO} does not have a strict unit in general. It was shown that the fundamental cycle $\bm{e}$ of $L$ is a \emph{homotopy unit}.  We briefly describe key properties and ideas below, and refer the readers to \cite[Chapter 3.3]{FOOO} for the precise definition of a homotopy unit and to \cite[Chapter 7.3]{FOOO} for details of the homotopy unit construction.

The constructed $A_\infty$-algebra $(C^{\bullet}(L;\Lambda_0),\tilde{\fm})$ will be enlarged to a \emph{unital} $A_{\infty}$-algebra $(C^{\bullet}(L;\Lambda_0)^+,\tilde{\fm}^+)$ that is homotopy equivalent to $C^{\bullet}(L;\Lambda_0)$.
At first, we enlarge the chain complex $C^{\bullet}(L;\Lambda_0)$ by adding a generator $\bm{e}^+$ of degree $0$ serving as the strict unit and a generator $\bm{f}$ of degree $(-1)$ serving as a homotopy between $\bm{e}$ and $\bm{e}^+$, that is,
\begin{equation}\label{equ_cLlambda0+}
C^{\bullet}(L;\Lambda_0)^+=C^{\bullet}(L;\Lambda_0)\oplus \Lambda_0 \cdot\bm{e}^+ \oplus \Lambda_0 \cdot\bm{f}.
\end{equation}
Next, the $A_{\infty}$-structure maps $\tilde{\fm}^+ = \{ \tilde{\fm}^+_k \}_{k\ge 0}$ are defined with the following properties$\colon$
\begin{enumerate}
\item The restriction of $\tilde{\fm}^+$ to $C^{\bullet}(L;\Lambda_0)$ agrees with $\tilde{\fm}$,
\item $\bm{e}^+$ is the strict unit, i.e.
\begin{enumerate}[label=(\roman*)]
\item $\tilde{\fm}^+_2(\bm{e}^+,x)=(-1)^{|x|}\tilde{\fm}^+_2(x,\bm{e}^+)=x$ for $x\in C^{\bullet}(L;\Lambda_0)^+$,
\item $\tilde{\fm}^+_k(\ldots,\bm{e}^+,\ldots)=0$ for $k\ne 2$.
\end{enumerate}
\item $\bm{f}$ is a homotopy between $\bm{e}$ and $\bm{e}^+$.
\end{enumerate}


Later on, we will define a disc potential (Definition~\ref{def:disc_potential}), which is to be of the most interest to us in the manuscript, after passing to the enlarged $A_\infty$-algebra with a strict unit. 
To compute such a disc potential from the original singular chain model (before the enlargement), we need to examine the homotopy $\bm{f}$ between $\bm{e}$ and $\bm{e}^+$.  

A construction and properties of the homotopy $\bm{f}$ are in order. 
Let $\vec{a}= (a_1,\ldots,a_{|\vec{a}|})$ be an ordered subset of $\{1,\ldots,k\}$ satisfying $a_1< \ldots < a_{|\vec{a}|}$. 
For $\vec{P}=(P_1,\ldots,P_{k-|\vec{a}|})$ with $P_i\in\mathcal{X}_{\fd(i)}(L)$, let $\vec{P}^{+}$ be the $k$-tuple obtained by inserting $\bm{e}$ into the $\vec{a}$-th places of $\vec{P}$. We  assume $L\in \mathcal{X}_{0}(L)$. We set $\fd^{+}(i)=g_i^{+}$ if $\vec{P}^{+}=(P_1^{+},\ldots,P_k^{+})$ where $P^{+}_i\in\mathcal{X}_{g^{+}_i}(L)$.
Consider the forgetful map
\begin{equation}
\label{forget}
\forget_{\vec{a}} \colon \mathcal{M}_{k+1}(\beta;L;\vec{P}^{+})\to \mathcal{M}_{k+1-|\vec{a}|}(\beta;L;\vec{P}),
\end{equation}
which forgets the $\vec{a}$-th marked points and then stabilizes.

We choose a perturbation on 
\begin{equation} \label{eq:homotopy_moduli}
[0,1]^{|\vec{a}|}\times \cM_{k+1}(\beta;L;\vec{P}^{+})
\end{equation}
as follows: 
For a splitting $\vec{a}^1\coprod \vec{a}^2=\vec{a}$ of $\vec{a}$, we denote by $\vec{P}'$ the $(k-|\vec{a}^1|)$-tuple given by removing $\bm{e}$ from the $\vec{a}^1$-th places of $\vec{P}^{+}$.  Let $(t_1,\ldots,t_{|\vec{a}|})$ be coordinates on $[0,1]^{|\vec{a}|}$. If $t_i=0$ for all $i\in\vec{a}^1$, we take the perturbation $\fs_{\fd^{+},k,\beta,\vec{P}^{+}}$ transversal to the zero section obtained by just inserting $\bm{e}$ into the $a_1,\ldots,a_{|\vec{a}^1|}$-th place. If $t_i=1$ for all $i\in\vec{a}^1$, we take the perturbation pulled back via the map $\forget_{\vec{a}^1} \colon \mathcal{M}_{k+1}(\beta;L;\vec{P}^{+})\to \mathcal{M}_{k+1-|\vec{a}^1|}(\beta;L;\vec{P}')$ in~\eqref{forget}. 
Finally, we take a perturbation $\fs_{\fd^{+},k,\beta,\vec{P}^{+}}^+$ on $[0,1]^{|\vec{a}|}\times \cM_{k+1}(\beta;L;\vec{P}^{+})$ transversal to the zero section interpolating between them. 

The maps $\tilde{\fm}^+_{k,\beta}$ for $(k,\beta)(\ne (1,\beta_0))$ with inputs $\bm{f}$ inserted into $\vec{a}$-th place of $\vec{P}$ are defined by
\begin{equation}
\label{m+}
(\ev_0)_*\left(\left([0,1]^{|\vec{a}|}\times \cM_{k+1}(\beta;L;\vec{P}^{+})\right)^{\fs_{\fd^{+},k,\beta,\vec{P}^{+}}^+}/\sim\right)	
\end{equation}
where $\sim$ is the equivalence relation collapsing fibers of $\forget_{\vec{a}}$, see \cite[Definition 7.3.28]{FOOO}. Notice that the zero locus of a pullback multisection is a degenerate singular chain which becomes zero in the quotient.

We also set $\tilde{\fm}^+_{1,\beta_0}(\bm{f})=\bm{e}^+-\bm{e}$
and therefore
\begin{equation} \label{eq:m_1_f}
\tilde{\fm}^+_1(\bm{f})=\bm{e}^+-\bm{e}+\bm{h},
\end{equation}
where
\begin{equation*}
\label{eq:fh}
\bm{h}=\sum_{\beta\ne \beta_0} \bm{T}^{\omega_X(\beta)} (\ev_0)_* \left(([0,1]\times \cM_2(\beta;L;\bm{e}))^{\fs^+}/\sim \right).
\end{equation*}
Note that $\bm{h} \equiv 0 \mod \Lambda_+ \cdot C^{\bullet}(L;\Lambda_0)$ and the terms 
\begin{equation}\label{equ_obdeg}
\sum_{\mu(\beta)\ge 2} (\ev_0)_* \left(([0,1]\times \cM_2(\beta;L;\bm{e}))^{\fs^+}/\sim \right)
\end{equation}
are of degrees at most $-2$. 

\subsection{Morse homology $=$ singular homology: an alternative approach}
\label{sec:SingMorse model}
Pearl complex was employed by Biran and Cornea \cite{BC-pearl, BC-survey} to construct Lagrangian Floer cohomology of monotone Lagrangians (a similar complex also appeared in an earlier work of Oh \cite{Oh96R}).  It has many important applications, including the proof of homological mirror symmetry for Fermat Calabi-Yau hypersurfaces by Sheridan \cite{Sheridan-CY}.

A Morse model of Lagrangian Floer theory was constructed in \cite{FOOO-can} based on their singular chain model.  
We follow their construction to construct a Morse model in Section~\ref{sec:Morse model}, with a modification that we add certain degenerate chains as summands in the definition of unstable chains $\Delta_p$ of critical points $p$ and the chains of forward orbits $F(\tau)$ of singular chains $\tau$. 
The main reason for the modification is that there are unwanted degenerate chains appeared in $\partial \Delta_p$ and $\partial F(\tau)$, and we `contract them away' by adding cones over the unwanted terms.
This modification enables us to realize a Morse complex as a singular chain complex, which will be explained in this section. 


Let $f \colon L\to\R$ be a Morse function on a Lagrangian submanifold $L$ and $V$ a negative pseudo-gradient vector field for $f$, i.e. $df(V)|_{p}\le 0$ and the equality holds if and only if  $p\in\mathrm{Crit}(f)$. For each critical point $p\in\mathrm{Crit}(f)$, the vector field $V$ coincides with the negative gradient vector field for the Euclidean metric on a Morse chart of $p$. The flow of $V$ is denoted by $\Phi_t$. We denote by $W^s(f;p)$ and $W^u(f;p)$ the stable manifold of $p$ and unstable manifold of $p$, respectively. Namely, 
\begin{equation*}
W^s(f;p)=\left\{q\in L \mid \lim_{t\to +\infty}\Phi_t(q)=p\right\} \, \mbox{  and  }  \, W^u(f;p)=\left\{q\in L \mid \lim_{t\to -\infty}\Phi_t(q)=p\right\}.
\end{equation*}
The \emph{degree} of $p$ is denoted by $|p|$ and defined by
\[
 |p|:=d-\mathrm{ind}(p),
\] 
where $\mathrm{ind}(p)$ is the Morse index of $p$. Then
$|p|=\dim{W^s(f;p)}=\mathrm{codim } \, W^u(f;p)$.

We assume that $V$ satisfies the Smale condition and we call such a pair $(f,V)$ \emph{Morse-Smale}. For two distinct points $p,q\in\Crit(f)$, let $\mathcal{M}(p,q)$ be the moduli space of flow lines from $p$ to $q$. The Smale condition implies that the virtual dimension of $\mathcal{M}(p,q)$ is 
\[
\dim \mathcal{M}(p,q)=|q|-|p|-1.
\]
The moduli space $\mathcal{M}(p,q)$ and the unstable manifold $W^u(f;p)$ can be respectively compactified into smooth manifolds  $\overline{\mathcal{M}}(p,q)$ and $\overline{W}^u(f;p)$ with corners. The strata of codimension $k$ are denoted by
$\overline{\mathcal{M}}(p,q)_k$ and $\overline{W}^u(f;p)_k$, respectively. The strata consist of broken flow lines of the following form$\colon$
\begin{equation*}
\overline{\mathcal{M}}(p,q)_k=\bigcup_{|p|<|r_1|<\cdots<|r_k|<|q|} \mathcal{M}(p,r_1)\times\cdots\times \mathcal{M}(r_{k},q)
\end{equation*}
and
\begin{equation*}
\overline{W}^u(f;p)_k=\bigcup_{|p|<|r_1|<\cdots<|r_k|} \mathcal{M}(p,r_1)\times\cdots\times \mathcal{M}(r_{k-1},r_k)\times W^u(f;r_k).
\end{equation*}

Let $C^{\bullet}(f;\Lambda_0)$ be the cochain complex 
\[
C^{\bullet}(f;\Lambda_0)=\bigoplus_{p\in\mathrm{Crit}(f)} \Lambda_0 \cdot p,
\] 
with grading given by $|p|$. The Morse differential $\delta \colon C^{\bullet}(f;\Lambda_0)\to C^{\bullet+1}(f;\Lambda_0)$ is defined by counting gradient flow lines:
\[
\delta p= \sum_{|q|=|p|+1} \sharp \mathcal{M}(p,q)\cdot q.
\]

In \cite{FOOO-can}, a (non-unital) $A_{\infty}$-algebra structure was constructed on $C^{\bullet}(f;\Lambda_0)$ assuming $L$ has a triangulation whose simplices are the closures of $W^u(f;p)$. To establish an isomorphism between Morse and singular cohomology, we need to associate to each critical point $p$ of degree $|p|$ a singular chain. We can do so by choosing a triangulation on $\overline{W}^u(f;p)$ and regard it as a singular chain $d_p$. In general, $\overline{W}^u(f;p)$ has boundary components of the form $\overline{\mathcal{M}}(p,r)\times \overline{W}^u(f;r)$ with $|r|\ge |p|+2$. If $\dim \overline{\mathcal{M}}(p,r)\ge 1$ and the image of $\overline{\mathcal{M}}(p,r)\times \overline{W}^u(f;r)$ in $L$ is supported on $\overline{W}^u(f;r)$, the facet of $d_p$ representing $\overline{\mathcal{M}}(p,r)\times \overline{W}^u(f;r)$ is a \emph{degenerate} chain on $L$. Thus, for the assignment $p\mapsto d_p$ to be a cochain map, one should mod out the  degenerate chains. On the other hand, $\mathcal{X}_g(L)$ are constructed including degenerate chains since the $A_{\infty}$ products of degenerate chains may no longer be a degenerate chain. 

\begin{figure}[h]
	\begin{center}
		\includegraphics[scale=0.4]{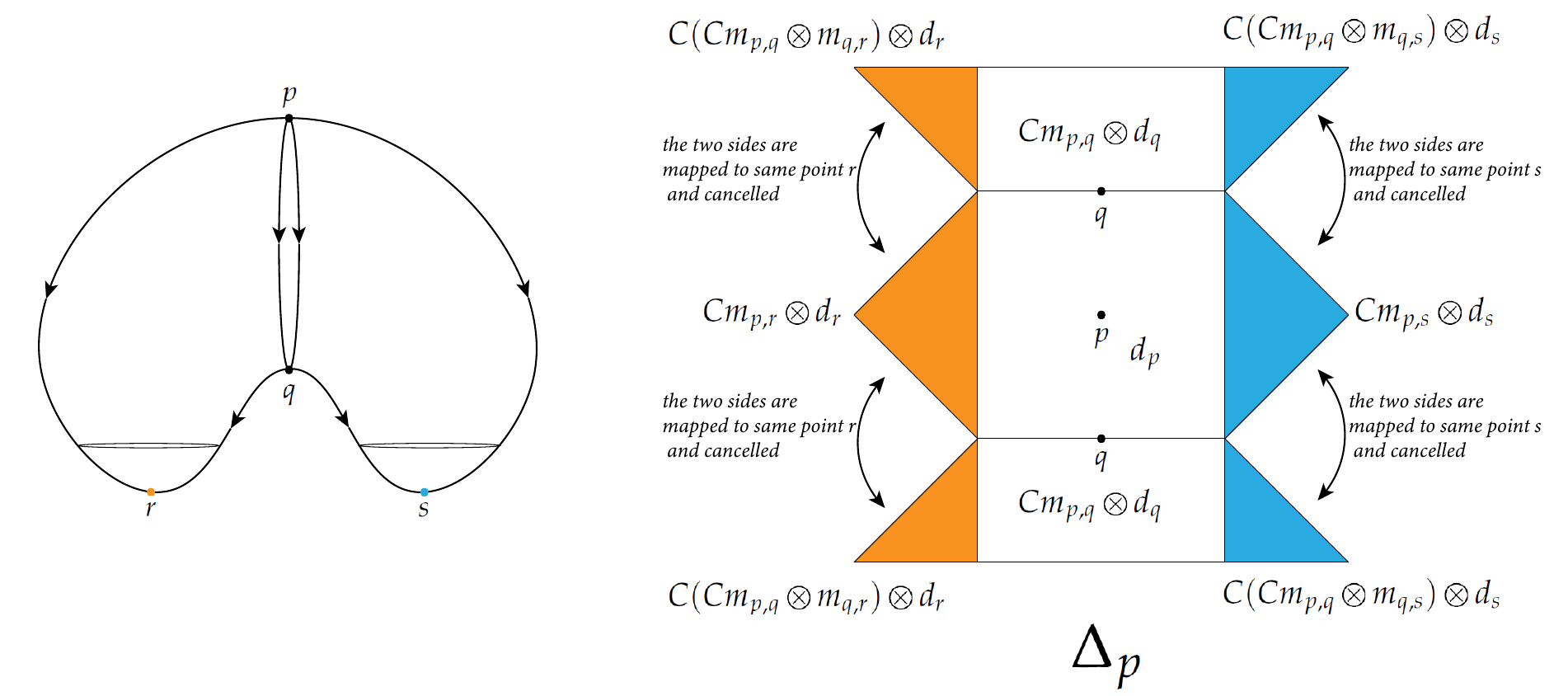}
		\caption{On the left is a Morse function on $S^2$. On the right is unstable chain $\Delta_p$ assigned to its maximal point $p$.}
		\label{fig:deg-chain}
	\end{center}
\end{figure}


We circumvent this difficulty by systematically adding certain degenerate chains to the fundamental chains of $\overline{W}^u(f;p)$ and then define a map from $C^{\bullet}(f;\Z)$ to $S^{\bullet}(L;\Z)$ accordingly in a way that it is a cochain map which induces an isomorphism on cohomology. (See Figure \ref{fig:deg-chain} for an illustration of the added degenerate chains.) 

We triangulate $\overline{W}^u(f;p)$ and $\overline{\mathcal{M}}(p,q)$ to regard them as smooth singular chains, which are denoted by $d_p$ and $m_{p,q}$, respectively. By choosing the triangulations inductively on $|p|$ and $|q|-|p|$ and by extending triangulation from the boundary strata, we can assume 

\begin{equation} \label{eq:sign_1}
\partial d_p=\sum_{|q|>|p|} (-1)^{|q|-|p|-1} m_{p,q}\otimes d_q, 
\end{equation}
and
\begin{equation} \label{eq:sign_2}
\partial m_{p,q}=\sum_{|q|>|r|>|p|} (-1)^{|r|-|p|-1} m_{p,r}\otimes m_{r,q}.
\end{equation}
Here the tensor product $A\otimes B$ of two simplicial complexes $A$ and $B$ denotes the following triangulation on the direct product $A\times B$: We choose linear orders on vertices of $A$ and $B$ (compatible with their orientations). Then there exists a unique triangulation $A\otimes B$ of $A\times B$ such that the vertices of $A\otimes B$ are pairs $(x,y)$, where $x$ is a vertex of $A$ and $y$ is a vertex of $B$, and an $n$-simplex in $A\otimes B$ is defined by a set $\{(x_0,y_0),\ldots,(x_n,y_n)\}$ such that $x_0\le\ldots\le x_n$, $y_0\le\ldots \le y_n$, $\{x_0,\ldots,x_n\}$ defines a simplex $\tau_1$ on $A$ and $\{y_0,\ldots,y_n\}$ defines a simplex $\tau_2$ on $B$ with $\dim \tau_1+ \dim \tau_2\ge n$. Notice that this is the standard product structure in the category of simplicial sets. 

We note that the linear map given by the assignment $p\mapsto d_p$ is \emph{not} a cochain map because of the presence of the boundary strata of the form $m_{p,q}\times d_q$ for $q$ with $|q|>|p|+1$.
To kill those error terms, we shall modify $d_p$. For an $n$-simplex $\tau$, let $C\tau$ be the simplicial cone over $\tau$. For $n\ge 1$, the cone $C\tau$ has vertices
\[
\{\text{vertices of } C\tau\}=\{\text{vertices of } \tau\}\medcup \{*\},
\]  
and simplices
\[
\begin{split}
\{n \text{-simplices of } C\tau \} &=\{n \text{-simplices of } \tau \} \\ &\medcup \{\text{cones of } (n-1) \text{-simplices of } \tau \text{ with the vertex } \{* \}\}.
\end{split}
\]
We orient $C\tau$ in such a way that 
\begin{equation} \label{eq:boundary_cone}
\partial (C\tau) =(-1)^{\dim \tau+1} \tau + C(\partial \tau),  \qquad \text{ for } \dim \tau\ge 1.  
\end{equation}
For a singular chain $\tau\to L$, the chain  $C\tau \to L$ is defined by precomposing $\tau\to L$ with the face map $C\tau\to \tau$ obtained by omitting the vertex $\{*\}$. When $\tau=\{*\}$, $C\tau$ is the $1$-simplex and $\partial (C\tau)$ is the disjoint union of two $0$-simplices equipped with opposite orientations. Moreover, we have $\partial (C\tau)=0$ as a singular chain.

For each component $m_{p,q} \times d_q$ of $\partial d_p$, we define a singular chain $\Delta_{p,q}$ by 
\begin{equation*}
	\Delta_{p,q}= \sum_{\substack{r_1=q \\ |r_1|<\cdots<|r_n|}}   C(\cdots C(C(Cm_{p,r_1}\otimes m_{r_1,r_{2}})\otimes  m_{r_2,r_3} )\otimes \cdots \otimes m_{r_{n-1},r_n}) \otimes d_{r_n}.
\end{equation*}
The chain $\Delta_p$ is then defined by 
\begin{equation}\label{Delta_p}
\Delta_p=d_p+\sum_{|q|>|p|} \Delta_{p,q}.
\end{equation}
Using~\eqref{Delta_p}, we define the map 
\begin{equation} \label{eq:delta_p}
\iota \colon C^{\bullet}(f;\Z)\to S^{\bullet}(L;\Z)
\end{equation}
by extending the assignment $\iota(p)=\Delta_p$ linearly.

\begin{lemma} \label{lemma:chain_map}
The linear map $\iota \colon C^{\bullet}(f;\Z)\to S^{\bullet}(L;\Z)$ is a cochain map, that is, $\iota \circ \delta   = \partial \circ \iota$. 
\end{lemma}
\begin{proof}
By the linearity, it suffices to show that 
\begin{equation}
\partial \Delta_p =\sum_{|r|=|p|+1} m_{p,r}\otimes \Delta_r.
\end{equation}
By iterating \eqref{eq:boundary_cone}, we compute 
\begin{align*}
\partial \Delta_p&=\partial d_p + \sum_{q}  \partial \Delta_{p,q}   \\
&= \sum_{q} (-1)^{|q|-|p|-1} m_{p,q}\otimes d_q \\
&\quad +  \sum_{r_1,\ldots,r_n}  (-1)^{|r_n|-|p|} C(\cdots Cm_{p,r_1}\otimes \cdots \otimes m_{r_{n-2},r_{n-1}})\otimes  m_{r_{n-1},r_n}\otimes d_{r_n} \\
& \quad + \sum_{r_1,\ldots,r_n}  C(\cdots Cm_{p,r_1}\otimes \cdots \otimes m_{r_{n-1},r_n}) \otimes \left(\sum_{r_{n+1}}  (-1)^{|r_{n+1}|-|p|-1} m_{r_n,r_{n+1}}\otimes d_{r_{n+1}}   \right)  \\
&\quad + \sum_{r_1,\ldots,r_n} \sum_{j} (-1)^{ |r_{j}|-|p|}   C(\cdots C(\cdots Cm_{p,r_1}\otimes \cdots \otimes m_{r_{j-1},r_j}\otimes m_{r_j,r_{j+1}})\otimes \cdots  \otimes m_{r_{n-1},r_n})\otimes d_{r_n}\\
&\quad  + \sum_{r_1,\ldots,r_n}\sum_{j} C(\cdots C(\cdots Cm_{p,r_1}\otimes \cdots \otimes \left(\sum_{s_j} (-1)^{|s_j|-|p|-1} m_{r_{j},s_j}\otimes m_{s_j,r_{j+1}}\right) ) \otimes \cdots  \otimes m_{r_{n-1},r_n}) \otimes d_{r_n} \\
&\quad + \sum_{r_1,\ldots,r_n}   C(\cdots  C(\partial(Cm_{p,r_1})\otimes m_{r_1,r_2}) \cdots \otimes m_{r_{n-1},r_n}) \otimes  d_{r_{n}} \\ 
&=\sum_{|q|=|p|+1}     m_{p,q}\otimes d_q  + \sum_{r_1,\ldots,r_n} C(\cdots C\left(\sum_{|q|=|p|+1} m_{p,q}\otimes m_{q,r_1}\right)\otimes \cdots \otimes m_{r_{n-1},r_n})  \otimes d_{r_{n}} \\ 
&= \sum_{|q|=|p|+1}   m_{p,q} \otimes \left( d_q +  \sum_{r_1,\ldots,r_n}    C(\cdots Cm_{q,r_1}\otimes \cdots \otimes m_{r_{n-1},r_n}) \otimes d_{r_{n}} \right) \\ 
&=\sum_{|q|=|p|+1}  m_{p,q} \otimes (d_q+\sum_{r} \Delta_{q,r} )= \sum_{|q|=|p|+1}  m_{p,q} \otimes \Delta_q. 
\end{align*}
We note that the first and second equalities above hold by definition. The third equality follows from the cancellation of terms appearing twice with opposite signs and  the fact that $\partial (Cm_{p,q})=0$ as a singular chain when $\dim m_{p,q}=0$, see \eqref{eq:sign_1}, \eqref{eq:sign_2} and \eqref{eq:boundary_cone}  for the sign rules. The fourth equality holds because $C(\{*\}\otimes \tau)=\{*\}\otimes C\tau$.
\end{proof}

In order to show that \eqref{eq:delta_p} is a quasi-isomorphism, we introduce some terminologies.
We call a simplex $\tau \to L$ \emph{generic} if each face of $\tau$ is transverse to the stable submanifolds of all the critical points of $f$. We denote by $S^{\bullet}_{gen}(L;\Z)$ the subcomplex of $S^{\bullet}(L;\Z)$ generated by generic simplices. It is well-known that the inclusion $S^{\bullet}_{gen}(L;\Z)\into S^{\bullet}(L;\Z)$ is a quasi-isomorphism. 

For a generic simplex $\tau$, we denote by $\mathcal{M}(\tau,q)$ the moduli space of flow lines from $\tau$ to a critical point $q$. Its dimension is equal to $\dim \mathcal{M}(\tau,q)=\dim \tau-d+|q|$.  
Let $\tau(j)$ denote the codimension $j$ strata of $\tau$. The moduli space $\mathcal{M}(\tau,q)$ has a natural compactification to a smooth manifold with corners $\overline{\mathcal{M}}(\tau,q)$. Its strata of codimension $k$ are
\begin{equation} \label{eq:compactification}
\overline{\mathcal{M}}(\tau,q)_k=\bigcup_{j=0}^k \bigcup_{|p_1|<\cdots <|p_j|<|q|} \cM(\tau(k-j),p_1)\times \cM(p_1,p_2)\times\ldots\times \cM(p_{j-1},p_j)\times \cM(p_j,q).
\end{equation}


From the description of the compactification \eqref{eq:compactification}, we obtain the following lemma. 

\begin{lemma} \label{lem:projection_chain_map}
The linear map 
\begin{equation} \label{eq:projection}
\Pi \colon S^{\bullet}_{gen}(L;\Z) \to C^{\bullet}(f;\Z)
\end{equation}
given by
\begin{equation*}
\Pi(\tau)=\sum_{\dim \tau=d-|q|} \sharp \mathcal{M}(\tau,q)\cdot q.
\end{equation*}
is a cochain map, that is, $\delta\circ \Pi = \Pi\circ \partial$.
\end{lemma}

Now, we define the \emph{forward orbit} $\mathscr{F}(\tau)$ of a generic simplex $\tau$ as the set
$
\mathscr{F}(\tau):=[0,\infty)\times \tau
$
with the map  $e \colon \mathscr{F}(\tau)\to L$ defined by $e(t,x)=\Phi_t(\tau(x))$, 
where $\Phi_t$ is the flow of the negative pseudo-gradient vector field $V$. 
The orbit $\mathscr{F}(\tau)$ has a natural compactification to a smooth manifold with corners $\overline{\mathscr{F}}(\tau)$, whose codimension $k$ strata are
\[
\overline{\mathscr{F}}(\tau)_k=\mathscr{F}(\tau(k))\bigcup_{j=1}^k\bigcup_{|r_1|<\cdots<|r_j|<|q|} \cM(\tau(k-j),r_1)\times \cM(r_1,r_2)\times\cdots \times \cM(r_{j-1},r_j)\otimes W^u(r_j).
\]

We triangulate $\overline{\mathcal{M}}(\tau,q)$ to regard it as a singular chain $m_{\tau,q}$. We can choose triangulations inductively on the dimension of $\tau$ such that
\begin{equation*}
	\partial m_{\tau,q}= m_{\partial \tau,q}+\sum_{|p|>|q|} (-1)^{\dim\tau+d-|q|} m_{\tau,p}\otimes m_{p,q}.
\end{equation*}
We then triangulate $\overline{\mathscr{F}}(\tau)$ to regard it as a singular chain $F(\tau)$. We can choose triangulations inductively on the dimension of $\tau$ such that 
\begin{equation*}
	\partial F(\tau)=-\tau-F(\partial \tau)+\sum_{\dim \tau\ge d-|q|} m_{\tau,q}\otimes d_q.
\end{equation*}
This gives rise to a linear map
\begin{equation} \label{eq:forward_chain}
F \colon S^{\bullet}_{gen}(L;\Z) \to S^{\bullet-1}(L;\Z)
\end{equation}
defined by extending $\tau \mapsto F(\tau)$ linearly. To show that the map $\iota$ in Lemma~\ref{lemma:chain_map} is a quasi-isomorphism, we shall use this map $F$ to produce a cochain homotopy.

\begin{theorem} \label{thm:quasi_iso}
The cochain map $\iota \colon C^{\bullet}(f;\Z)\to S^{\bullet}(L;\Z)$ defined by $\iota(p)=\Delta_p$ is a quasi-isomorphism.
\end{theorem}

\begin{proof}
We note that the chains $\Delta_p$ might not be contained in $S^{\bullet}_{gen}(L;\Z)$ in general. However, by the Morse-Smale property, every stratum of the compactified unstable submanifold $\overline{W}^u(f;p)$ is transverse to the stable submanifolds of all critical points of $f$. Thus, by choosing suitable triangulations on $\overline{W}^u(f;p)$, we can assume $\iota$ factors through $S^{\bullet}_{gen}(L;\Z)$. Since the inclusion of $S^{\bullet}_{gen}(L;\Z)$ into $S^{\bullet}(L;\Z)$ is a quasi-isomorphism, it suffices to show that $\iota \colon C^{\bullet}(f;\Z)\to S^{\bullet}_{gen}(L;\Z)$ is a quasi-isomorphism.

It is obvious that $\Pi\circ \iota = \mathrm{id}$ on $C^{\bullet}(f;\Z)$. 
It remains to show $\iota\circ \Pi$ is homotopic to the identity map on $S^{\bullet}_{gen}(L;\Z)$. Namely, we need to exhibit a map
\begin{equation*} 
G \colon S^{\bullet}_{gen}(L;\Z) \to S_{gen}^{\bullet-1}(L;\Z)
\end{equation*}
satisfying 
\begin{equation}  \label{eq:chain_homotopy}
\iota\circ \Pi-\mathrm{id}= \partial \circ G+G\circ \partial.
\end{equation}
The map G is defined as follows. For a generic simplex $\tau$, we have
\[
(\iota \circ \Pi)(\tau)=\sum_{\dim \tau= d-|q|}  m_{\tau,q}\otimes \Delta_q.
\]
and
\[
\sum_{\dim \tau= d-|q|}  m_{\tau,q}\otimes d_q-\tau + \sum_{\dim \tau > d-|q| } m_{\tau,q}\otimes d_q = \partial F(\tau)+ F(\partial \tau).
\]
Thus, $F$ is \emph{not} a cochain homotopy because of the following reasons$\colon$
\begin{enumerate}
\item (LHS) consists of the boundary components of $m_{\tau,q}\otimes d_q$  (instead of $m_{\tau,q}\otimes \Delta_q$) for $\dim \tau= d-|q|$.
\item (LHS) contains the error terms of the form $m_{\tau,q}\otimes d_q$ for $\dim \tau > d-|q|$.
\item The image of $F$ might not be contained in $S_{gen}^{\bullet}(L;\Z)$.
\end{enumerate}
As we modify $d_p$, we add the following degenerate chains to $F(\tau)$. For each facet of the form $m_{\tau,q}\otimes d_q$, we define a chain $\Delta_{\tau,q}$ by
\begin{equation*}
\Delta_{\tau,q} = \sum_{r_1,\ldots,r_n}   C(\cdots C(Cm_{\tau,q}\otimes m_{q,r_1})\otimes \cdots \otimes m_{r_{n-1},r_n})\otimes d_{r_n}. 
\end{equation*}
Then we define the linear map $G$ by mapping $\tau$ into a chain $G(\tau)$ where
\begin{equation} \label{eq:homotopy_operator}
G(\tau) := F(\tau)+ \sum_{q}  \Delta_{\tau,q}.
\end{equation}
Similar to the proof of Lemma \ref{lemma:chain_map}, we confirm the equation~\eqref{eq:chain_homotopy}$\colon$
\begin{align*} \label{eq:chain_homotopy_2}
\partial G(\tau) & = -\tau - F(\partial \tau) + \sum_{\dim \tau= d-|q|}  m_{\tau,q}\otimes \Delta_q\\
& \quad -\sum_{r_1,\ldots,r_n}  \sum_{q}  C(\cdots C(Cm_{\partial \tau,q}\otimes m_{q,r_1})\otimes \cdots \otimes m_{r_{n-1},r_n})\otimes d_{r_n}\\
&= -\tau - F(\partial \tau) - \sum_q \Delta_{\partial \tau, q}  + \sum_{\dim \tau= d-|q|}  m_{\tau,q}\otimes \Delta_q\\
&=-\tau- G(\partial \tau) + \sum_{\dim \tau= d-|q|}  m_{\tau,q}\otimes \Delta_q. 
\end{align*}
Finally, we note that for a generic simplex $\tau$, every stratum of the compactified forward orbit $\overline{\mathscr{F}}(\tau)$ is transverse to stable submanifolds of all critical points of $f$. Thus, by choosing a suitable triangulations, the image of  $G$ is contained in $S^{\bullet}_{gen}(L;\Z)$. Therefore, this gives us a desired cochain homotopy.
\end{proof}

We derive a property of the homotopy $G$ for the later purpose.
\begin{lemma} \label{lemm:Pi_G}
The composition $\Pi\circ G=0$.
\end{lemma}
\begin{proof}
For a generic $n$-simplex $\tau$, we have 
\[
(\Pi\circ G)(\tau)= \sum_{|q|=d-n-1}  \sharp \mathcal{M}(G(\tau),q)\cdot q.
\]
Note that $\dim \mathcal{M}(\tau,q)= \emptyset$ for every critical point $q$ with $|q|<d-n$. 
It implies that $\mathcal{M}(G(\tau),q)= \emptyset$ since there exists no flow line from $\tau$ to $q$ with $|q|=d-n-1$.
\end{proof}

\begin{remark}
The standard approaches to prove the equivalence between Morse and singular homology are to use self-indexing Morse function and the Morse-Smale complex, or to mod out the degenerate chains (see e.g. \cite{hutchings02,hutchings99}). The closest approach we were able to find in the literature is \cite{schwarz99}, in which one associates a pseudo-cycle to each Morse cycle. 
\end{remark}


\subsection{Morse model with a strict unit}
\label{sec:Morse model}

In this section, by adapting the homotopy unit construction in \cite[Chapter 7]{FOOO} (see also Charest-Woodward \cite{CW15}) and using the quasi-isomorphism $\iota$ in Theorem~\ref{thm:quasi_iso}, we explain how to construct a unital $A_\infty$-algebra on a pearl complex. 

We begin by recalling the construction of a non-unital $A_{\infty}$-algebra structure on $C^{\bullet}(f; \Lambda_0)$ outlined in \cite{FOOO-can}.
Let
\[
\mathcal{X}_{-1}(L)=\{\Delta_p \mid p\in\Crit(f)\}.
\]
We identify $C^{\bullet}_{(-1)}(L;\Lambda_0)$ with $C^{\bullet}(f;\Lambda_0)$ via the inclusion map $\iota$ in~\eqref{eq:delta_p}.

Let $g \ge 0$ be a generation in Section~\ref{sec:simplicial}.
For all $g^\prime$ with $g^\prime <g$, suppose a countable set $\mathcal{X}_{g'}(L)$ of smooth singular chain on $L$ had been constructed. The perturbations  $\fs_{\fd,\ell,\beta,\vec{P}}$ and $\fs^+_{\fd^{+},k,\beta,\vec{P}^{+}}$ in~\eqref{equ_frakks} and~\eqref{m+} can be chosen to have the following properties: 
Every simplex $\tau$ in the triangulation of
	\[
	(\ev_0)_*\left(\mathcal{M}_{\ell+1}(\beta;L;P_1,\ldots,P_{\ell})^{\fs_{\fd,\ell,\beta,\vec{P}}}\right) \mbox{ with $\norm{(\fd,\beta)}=g$} 
	\]
is generic, i.e., it is transverse to the stable submanifold for all $p\in\Crit(f)$. 



We denote by $\mathcal{X}_{g}(L)^{\circ}$ the set of all such simplices and a set of generic chains such that the complex 
\[
C^{\bullet}_{(g)}(L;\Lambda_0)^{\circ}= \Lambda_0\cdot  \mathcal{X}_{g}(L)^{\circ}\oplus \left( \bigoplus_{g'<g} \Lambda_0\cdot \mathcal{X}_{g'}(L)    \right)
\] 
is a quasi-isomorphic subcomplex of $S^{\bullet}(L;\Lambda_0)$. Notice that $C^{\bullet}_{(g)}(L;\Lambda_0)^{\circ}$ is a subcomplex of $S^{\bullet}_{gen}(L;\Z)$. and hence we have a restriction of the projection map 
\[
\Pi \colon C^{\bullet}_{(g)}(L;\Lambda_0)^{\circ} \to  C^{\bullet}(f;\Lambda_0)\cong C^{\bullet}_{(-1)}(L;\Lambda_0).
\] 

In order for the restriction of the homotopy operator $G$ in~\eqref{eq:homotopy_operator} to be well-defined, we enlarge $\mathcal{X}_{g}(L)^{\circ}$ to a set $\mathcal{X}_{g}(L)$ by adding singular chains of the form $G^n(\tau)$ and $\partial G^n(\tau)$ for all $\tau \in \mathcal{X}_{g}(L)$ and $n\ge 1$. 
 
 \begin{lemma} Let $C^{\bullet}_{(g)}(L;\Lambda_0)$ be the cochain complex defined by \eqref{eq:C_g}. Then
 $C^{\bullet}_{(g)}(L;\Lambda_0)$ satisfies the following properties:
 	\begin{enumerate}
 		\item $C^{\bullet}_{(g)}(L;\Lambda_0)$ is closed under $F_{\Delta}$.
 		\item $C^{\bullet}_{(g)}(L;\Lambda_0)$ is a quasi-isomorphic subcomplex of $S^{\bullet}(L;\Z)$.
 	\end{enumerate}
 \end{lemma}
\begin{proof}
For (1), we only have to check that $G(\partial G^n(\tau))\in C^{\bullet}_{(g)}(L;\Lambda_0)$. By \eqref{eq:chain_homotopy}, we have
\[
G(\partial F^n_{\Delta}(\tau))= -\partial G^{n+1}(\tau)-\partial G^n(\tau)+\Pi(\tau),
\]
and the RHS is in $C^{\bullet}_{(g)}(L;\Lambda_0)$. 

For (2), we need to show that added cocycles are cohomologous to the existing cocyles.  The cocyles of the form $G^n(\tau)$ are zero in cohomology. 
Suppose $G(\tau)$ is a cocycle, i.e., $\partial G^n(\tau)=0$. Then, 
\[
\partial G^{n+1}(\tau)=\Pi(G^n(\tau))-G^n(\tau), 
\]
by \eqref{eq:chain_homotopy}. This means $G^n(\tau)$ is cohomologous to the cocycle $\Pi(G^n(\tau))\in C^{\bullet}_{(g)}(L;\Lambda_0)^{\circ}$. Since $C^{\bullet}_{(g)}(L;\Lambda_0)^{\circ}$ is a quasi-isomorphic subcomplex of $S^{\bullet}(L;\Z)$, $C^{\bullet}_{(g)}(L;\Lambda_0)$ remains a quasi-isomorphic subcomplex.
\end{proof}

The procedure above gives us three linear maps
\begin{align*}
\iota \colon & C^{\bullet}(f;\Lambda_0) \to C^{\bullet}(L;\Lambda_0),\\
\Pi \colon & C^{\bullet}(L;\Lambda_0) \to C^{\bullet}(f;\Lambda_0),\\
G \colon &C^{\bullet}(L;\Lambda_0) \to C^{\bullet-1}(L;\Lambda_0).
\end{align*}
The triple $(\iota, \Pi,G)$ forms a \textit{contraction} for the homological perturbation:

\begin{defn} \label{def:contraction}
Let $(C,\frak{m}_{1,0})$ and $(H,\delta)$ be two cochain complexes. A triple $(\iota,\Pi,G)$ consisting of maps $\iota \colon H\to C$, $\Pi \colon C\to H$ of degree zero and a map $G \colon C\to C$ of degree $-1$ is called a \emph{contraction} if the following equations hold
\begin{equation}
	\frak{m}_{1,0}\circ \iota = \iota \circ \delta, \quad \Pi\circ \frak{m}_{1,0}= \delta \circ \Pi,
\end{equation}
\begin{equation}
\iota \circ \Pi - id_{C} = \frak{m}_{1,0}\circ G + G\circ \frak{m}_{1,0}. 
\end{equation}
\end{defn}
We apply homological perturbation to reduce the $A_{\infty}$-algebra structure on  $C^{\bullet}(L;\Lambda_0)$ to $C^{\bullet}(f;\Lambda_0)$ using the contraction $(\iota,\Pi,G)$, following \cite[Theorem ~5.1]{FOOO-can} (see also \cite[Theorem ~4.4]{Yuan21}). 

The $A_{\infty}$-algebra structure on $C^{\bullet}(f;\Lambda_0)$ is \emph{not} unital in general. In order to define a unital $A_{\infty}$-algebra structure on a pearl complex, we modify the construction as follows.

For simplicity, we shall always assume $f$ has a unique maximum point $\bunit$, so that $\Delta_{\bunit}=\bm{e}$ is the fundamental cycle. 
Let us set
\begin{equation}\label{equ_CFLlam0}
	CF^{\bullet}(L;\Lambda_0)=C^{\bullet}(f;\Lambda_0)\bigoplus \Lambda_0 \cdot \wunit \bigoplus \Lambda_0 \cdot \gunit
\end{equation}
with $|\wunit|=0$ and $|\gunit|=-1$. 
 (The superscripts $\tiny{\triangledown}$, $\blacktriangledown$ and $\textcolor{gray}{\blacktriangledown}$ are borrowed from \cite{CW15}, where they adapted the homotopy unit construction to the stabilizing divisor perturbation scheme for their unital Morse model.) We extend the Morse differential $\delta$ to  $CF^{\bullet}(L;\Lambda_0)$ by setting 
\begin{equation*}
	\label{delta+}
	\delta(\wunit)=0, \quad \delta(\gunit)=\wunit-\bunit. 
\end{equation*}
We extend $\iota$ and $\Pi$ to the chain maps
\begin{align*}
\iota \colon &CF^{\bullet}(L;\Lambda_0)\to C^{\bullet}(L;\Lambda_0)^+,\\
\Pi \colon & C^{\bullet}(L;\Lambda_0)^+ \to CF^{\bullet}(L;\Lambda_0),
\end{align*}
by setting
\begin{align*}
&\iota( \wunit)= \bm{e}, \quad  \iota(\gunit)=\bm{f},\\
&\Pi(\bm{e})=\wunit, \quad \Pi(\bm{f})=\gunit. 
\end{align*}
Here $C^{\bullet}(L;\Lambda_0)^+$ is the unital singular chain model \eqref{equ_cLlambda0+}  constructed via the homotopy unit construction. We now define a modified homotopy operator 
\begin{equation*}
G^+ \colon C^{\bullet}(L;\Lambda_0)^+\to C^{\bullet-1}(L;\Lambda_0)^+.
\end{equation*}
First, we extend $G$ by setting
\begin{equation*}
G(\bm{e}^+)=-\bm{f}, \quad G(\bm{f})=0.
\end{equation*}
Next, we choose a splitting 
\[
C^{\bullet}(L;\Lambda_0)^+=V\oplus W,
\]
where $V$ is the submodule generated by $\Image(G)$ and $\mathcal{X}_{-1}(L)$, and $W$ is a complement. We then define a linear map $G^+$ by setting
\begin{equation} \label{eq:G}
\begin{cases}
	G^+(\sigma)=G(\sigma) &\mbox{for $\sigma\in W$},  \\
	G^+(\sigma)=0 &\mbox{for $\sigma\in V$}. 
\end{cases}
\end{equation}

\begin{lemma} \label{lem:G}
The linear map $G^+$ is a cochain homotopy between $\iota\circ \Pi$ and the identity map, i.e., 
\begin{equation} \label{eq:G_chain_homotopy}
\iota\circ \Pi - id= \partial \circ G^+ + G^+ \circ \partial.
\end{equation} 
\end{lemma}

\begin{proof}
We note that $G^+$ satisfies \eqref{eq:G_chain_homotopy} on $W$ since $G$ is a cochain homotopy. For $\sigma \in V$, it suffices to check the cases when $\sigma=\Delta_p$ or $\sigma=G(\sigma')$ for some $\sigma'\in W$. In the former case, plugging $\Delta_p$ into the LHS and RHS of  \eqref{eq:G_chain_homotopy} respectively, we get
\[
(\iota \circ \Pi)(\Delta_p)-\Delta_p= \Delta_p-\Delta_p=0, 
\]
and
\[
\partial G^+(\Delta_p) - G^+(\partial \Delta_p)=0- \sum_{|q|=|p|+1} G^+(\Delta_q)=0.
\]
In the latter case, plugging $G(\sigma')$ into the LHS and RHS of  \eqref{eq:G_chain_homotopy} respectively, we get
\[
(\iota\circ \Pi) (G(\sigma')) - G(\sigma')=-G(\sigma')
\]
by Lemma \ref{lemm:Pi_G}, and 
\begin{align*}
\partial G^+(G(\sigma'))+ G^+(\partial G(\sigma') ) &= G^+(-\sigma' - G(\partial \sigma')+ \iota\circ \Pi (\sigma'))\\
&= -G^+(\sigma')-G^+(G(\partial \sigma'))+G^+( \iota\circ \Pi (\sigma'))\\
&=-G(\sigma'). 
\end{align*}
This shows that $G^+$ is a homotopy operator.
\end{proof}

The contraction $(\iota,\Pi,G^+)$ has the following additional properties which ensure that the $A_\infty$-algebra arising from the homological perturbation is unital.

\begin{defn}[\cite{Yuan21}] \label{def:strong_contraction}
A contraction $(\iota,\Pi,G^+)$ is called a \textit{strong contraction} if it has the following additional properties:
\begin{equation} \label{eq:property_1}
\Pi\circ \iota - \text{id}_{H}=0,
\end{equation}
\begin{equation}  \label{eq:property_2}
G^+\circ G^+=0,
\end{equation}
\begin{equation}  \label{eq:property_3}
G^+\circ \iota =0,
\end{equation}
\begin{equation}  \label{eq:property_4}
\Pi\circ G^+=0.
\end{equation}
\end{defn}

\begin{lemma}
The triple $(\iota,\Pi,G^+)$ is a strong contraction. 
\end{lemma}
\begin{proof}
The triple  $(\iota,\Pi,G^+)$ is a contraction by Lemma \ref{lem:G} and the fact that the triple $(\iota,\Pi,G)$ is a contraction. As for the additional conditions, \eqref{eq:property_1} follows from the definition of $\iota$ and $\Pi$, \eqref{eq:property_2} and  \eqref{eq:property_3} follow from the definition of $G^+$, and  \eqref{eq:property_4} follows from Lemma \ref{lemm:Pi_G}. 
\end{proof}

We apply homological perturbation to reduce the $A_{\infty}$-algebra structure from $C^{\bullet}(L;\Lambda_0)^+$ to $CF^{\bullet}(L;\Lambda_0)$ using the strong contraction $(\iota,\Pi,G^+)$. We denote this $A_{\infty}$-algebra by $(CF^{\bullet}(L;\Lambda_0),\fm)$. By the proof of Proposition 4.7 in \cite{Yuan21}, the $A_\infty$-algebra $(CF^{\bullet}(L;\Lambda_0),\fm)$ from the strong constraction becomes unital. In sum, we have derived the following theorem. 

\begin{theorem}\label{thm:homotopy_equivalence}
The pair $(CF^{\bullet}(L;\Lambda_0),\fm)$ is a unital $A_{\infty}$-algebra with the strict unit $\wunit$, which is homotopy equivalent to 
$(C^{\bullet}(L;\Lambda_0)^+,\tilde{\fm}^+)$. Moreover, the homotopy equivalence is unital.
\end{theorem}

We now give an explicit description for the $A_\infty$-structure maps of $(CF^{\bullet}(L;\Lambda_0),\fm)$. 
The structure maps $\fm_k$ can be exrpessed in terms of maps $\fm_{\Gamma}$ and $\ff_{\Gamma}$ associated to \emph{decorated planar rooted trees}.
Let us fix a labeling $\{\beta_0,\beta_1,\ldots\}$ of elements of $H_2^{\mathrm{eff}}(X,L)$ where $\beta_0$ is the constant disc class.

\begin{defn}[\cite{FOOO-can}]
	A \emph{decorated planar rooted tree} is a quintuple $\Gamma=(T,i,v_0,V_{tad},\eta)$ consisting of 
	\begin{itemize}
		\item $T$ is a tree,
		\item $i \colon T\to D^2$ is an embedding into the unit disc,
		\item $v_0$ is the root vertex and $i(v_0)\in\partial D^2$,
		\item $V_{tad}$ is the set of interior vertices with valency $1$,
		\item $\eta \colon V(\Gamma)_{int}\to \Z_{\ge 0}$,
	\end{itemize}
	where $V(\Gamma)$ is the set of vertices, $V(\Gamma)_{ext}=i^{-1}(\partial D^2)$ is the set of exterior vertices, and $V(\Gamma)_{int}=V(\Gamma)\setminus V(\Gamma)_{ext}$ is the set of interior vertices.
\end{defn}

 For $k \ge 0$, we denote by $\bm{\Gamma}_{k+1}$ the set of trees $\Gamma=(T,\iota,v_0,\eta)$ with $|V(\Gamma)_{ext}|=k+1$ and $\eta(v)>0$ if the valency $\ell(v)$ of $v$ is $1$ or $2$. In other words, a holomorphic disc in class $\beta_{\eta(v)}$ with $\ell(v)$ boundary marked points is stable. 
We will refer to the elements of $\bm{\Gamma}_{k+1}$ as stable trees. 

We denote by $\Gamma^0\in \bm{\Gamma}_2$ the unique tree with no interior vertices.
For this tree $\Gamma_0$, set
\[
\begin{cases}
\fm_{\Gamma_0} \colon CF^{\bullet}(L;\Lambda_0)\to CF^{\bullet}(L;\Lambda_0) &\mbox{ given by $\fm_{\Gamma_0}:= \tilde{\fm}_{1,\beta_0}$}  \\
\ff_{\Gamma_0} \colon CF^{\bullet}(L;\Lambda_0)\to C^{\bullet}(L;\Lambda_0)^+ &\mbox{ given by $\ff_{\Gamma_0}:= \iota$} 
\end{cases}
\]
where $\iota$ is the inclusion map in~\eqref{eq:delta_p}. 
For each $k \ge 0$, $\bm{\Gamma}_{k+1}$ contains a unique element that has a single interior vertex $v$, which is denoted by $\Gamma_{k+1}$.
We then define
\[
\begin{cases}
\fm_{\Gamma_{k+1}}=\Pi\circ \tilde{\fm}^+_{k,\beta_{\eta(v)}}, \\
\ff_{\Gamma_{k+1}}=G\circ \tilde{\fm}^+_{k,\beta_{\eta(v)}}.
\end{cases}
\]
For general $\Gamma$, we cut it at the vertex $v$ closest to $v_0$ so that $\Gamma$ is decomposed into $\Gamma^{(1)},\ldots,\Gamma^{(\ell)}$ and an interval adjacent to $v_0$ in the counter-clockwise order. We inductively define
\[
\begin{cases}
\fm_{\Gamma}=\Pi\circ \tilde{\fm}^+_{\ell,\beta_{\eta(v)}}\circ (\ff_{\Gamma^{(1)}} \otimes\ldots\otimes\ff_{\Gamma^{(\ell)}})\\
\ff_{\Gamma}=G\circ \tilde{\fm}^+_{\ell,\beta_{\eta(v)}}\circ (\ff_{\Gamma^{(1)}} \otimes\ldots\otimes\ff_{\Gamma^{(\ell)}}).
\end{cases}
\]

The maps $\ff_k \colon CF^{\bullet}(L;\Lambda_0)^{\otimes k}\to C^{\bullet}(L;\Lambda_0)^+$ defined by 
\begin{equation*}
	\label{eq:f_k}
	\ff_k=\sum_{\Gamma\in\bm{\Gamma}_{k+1}} \bT^{\omega_X(\Gamma)} \ff_{\Gamma},
\end{equation*}
is a homotopy equivalence by Theorem \ref{thm:homotopy_equivalence}. The $A_{\infty}$-maps $\fm_k \colon CF^{\bullet}(L;\Lambda_0)^{\otimes k}\to CF^{\bullet}(L;\Lambda_0)$ are given by 
\begin{equation}
\label{eq:m_k}
\fm_k=\sum_{\Gamma\in\bm{\Gamma}_{k+1}} \bT^{\omega_X(\Gamma)} \fm_{\Gamma},
\end{equation}
where $\omega_X(\Gamma)=\sum_{v\in V_{int}} \omega_X(\beta_{\eta(v)})$.


\begin{figure}[h]
	\begin{center}
		\includegraphics[scale=0.57]{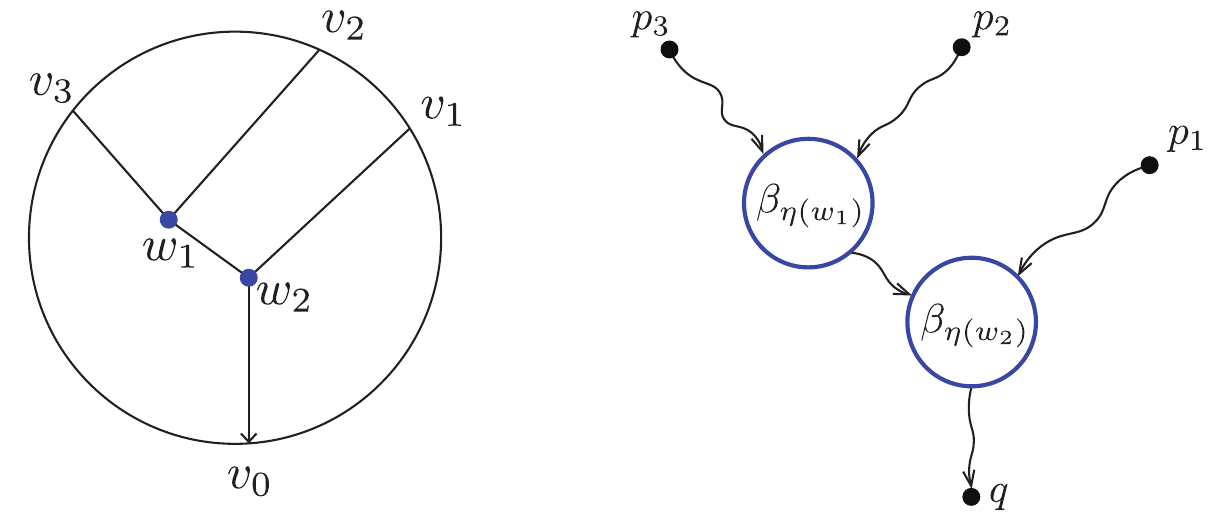}
		\caption{Pearl trees}
		\label{fig:pearl}
	\end{center}
\end{figure}

The maps $\fm_{\Gamma}$ restricted to $C^{\bullet}(f;\Lambda_0)$ are given by counting pearly trees as depicted in Figure~\ref{fig:pearl} (This observation was made in \cite{FOOO-can}. See also \cite{CW15} for a description of $\fm_{\Gamma}$ with general inputs in terms of pearly trees).  
For a decorated tree $\Gamma\in\bm{\Gamma}_{k+1}$, the exterior vertices $v_0,\ldots,v_k$ are labeled respecting the counter-clockwise orientation. 
Each edge is oriented in the direction from the $k$ input vertices $v_1,\ldots,v_k$ to the root vertex $v_0$. 
We denote by $e_i$ the edge attached to $v_i$ and by $v^{\pm}(e)$ the vertices such that $e$ is the edge from $v^-(e)$ to $v^+(e)$. For critical points $p_1,\ldots,p_k,q\in \mathrm{Crit}(f)$, consider the moduli space $\mathcal{M}_{\Gamma}(f;p_1,\ldots,p_k,q)$ consisting of the following configurations:
\begin{itemize}
	\item for each interior vertex $v$, a bordered stable map $u_v$ representing the class $\beta_{\eta(v)}$ with $\ell(v)$ boundary marked points. We denote by $p(e,v)$ the marked point corresponding to the edge $e$ attached to $v$;
	\item  for $i=1,\ldots,k$, the input edge $e_i$ corresponds to a flow line $\gamma_i$ from $p_i$ to $u_{v^+(e_i)}(p(e_i,v^+(e_i))$;
	\item the output edge $e_0$ corresponds to a flow line $\gamma_0$ from $u_{v^-(e_0)}(p(e_0,v^-(e_0)))$ to $q$;
	\item an interior edge $e$ corresponds to a flow line $\gamma_e$ from $u_{v^{-}(e)}(p(e,v^-(e)))$ to $u_{v^{+}(e)}(p(e,v^+(e)))$.
\end{itemize}

The virtual dimension of $\mathcal{M}_{\Gamma}(f;p_1,\ldots,p_k,q)$ is equal to 
\begin{equation} \label{eq:dim_formula}
\dim \mathcal{M}_{\Gamma}(f;p_1,\ldots,p_k,q)=k-2+\mu(\Gamma)-\sum_{i=1}^k |p_i|+|q|,
\end{equation}
where $\mu(\Gamma)=\sum_{v\in V_{int}} \mu(\beta_{\eta(v)})$.
The map $\mathfrak{m}_{\Gamma} \colon C^{\bullet}(f;\Lambda_0)^{\otimes k}\to C^{\bullet}(f;\Lambda_0)$ is defined by
\[	
\mathfrak{m}_{\Gamma}(p_1,\ldots,p_k)=\sum_{q\in\mathrm{Crit}(f)}\sharp \mathcal{M}_{\Gamma}(f;p_1,\ldots,p_k,q)\cdot q,
\]
where $\sharp\mathcal{M}_{\Gamma}(f;p_1,\ldots,p_k,q)$ is the signed count of the moduli space of virtual dimension zero.





\subsection{Disc potentials in Lagrangian Floer theory} 

In this section, we recall the disc potential of $(CF^{\bullet}(L;\Lambda_0),\fm)$ introduced by Fukaya-Oh-Ohta-Ono \cite{FOOO}.

Let $\Lambda_+\subset \Lambda_0$ denote the maximal ideal
\[	
\Lambda_+=\left\{\sum_{i=0}^{\infty} a_iT^{A_i}|a_i\in \mathrm{k}, A_i\in\R_{> 0}; A_i \text{ increases to } +\infty,\right\}.
\]
Let $(A,\fm)$ be an $A_{\infty}$-algebra over $\Lambda_0$ with the strict unit $e_A$,
and set 
\[
A_+=\{x\in A \mid x\equiv 0 \mod \Lambda_+\cdot A \}.
\]

The \emph{weak Maurer-Cartan equation} for an element $b\in A_+$ is given by
\begin{equation*}
\label{eq:MC}
\fm^b_0(1)=\fm_0(1)+\fm_1(b)+\fm_{2}(b,b)+\ldots \in \Lambda_0 \cdot e_A.
\end{equation*}
The condition that $b\in A_+$ ensures the convergence of $\fm^b_0(1)$. A solution $b$ of \eqref{eq:MC} is called a \emph{weak bounding cochain}. We denote by 
\begin{equation*}
\mathcal{MC}(A)= \left\{b\in A^{odd}_+ \mid \fm_0^b(1)\in \Lambda_0\cdot e_A \right\},
\end{equation*}
the space of weak Maurer-Cartan elements. 
We say an $A_{\infty}$-algebra $A$ is \emph{weakly unobstructed} if $MC(A)$ is nonempty, in which case we have $(\fm^b_1)^2=0$ for any $b\in \mathcal{MC}(A)$, thus defining a cohomology theory $H^{\bullet}(A,\fm^b_1)$. 

Let us put 
\begin{equation*}
e^b:=1+b+b\otimes b+\ldots.
\end{equation*}
For an element $b\in \mathcal{MC}(A)$, we can define a deformation $\fm^b$ of the $A_{\infty}$-structure $\fm$ by
\begin{equation} \label{eq:deformation}
\fm^b_k(x_1,\ldots,x_k)=\fm(e^b,x_1,e^b,x_2,e^b,\ldots,e^b,x_k,e^b), \quad x_1,\ldots,x_k\in A.
\end{equation}

Now, for $(A,\fm)=(CF^{\bullet}(L;\Lambda_0),\fm)$, we denote by $\mathcal{MC}(L)$ the space of odd degree 
weak bounding cochains. In general, one should also consider gauge equivalences between weak Maurer-Cartan solutions. However, we omit them here since they will trivial in our examples. 

We say that the deformation of $L$ by $b$ (or simply $(L,b)$) is \emph{unobstructed} (resp. \emph{weakly unobstructed}), if $\mathfrak{m}^{b}(1)=0$ (resp. $b\in \mathcal{MC}(L)$). In particular, if $b=0$, we will simply say $L$ is unobstructed.

The following lemma concerns the weakly unobstructedness of $(CF^{\bullet}(L;\Lambda_0),\fm)$. This technique of finding weak bounding cochains in the presence of a homotopy unit was introduced in \cite[Chapter 7]{FOOO} and \cite[Lemma 2.44]{CW15}.

\begin{lemma}
\label{lem:unobstr}
Let $b\in CF^{1}(L;\Lambda_+)$. Suppose $\fm_0^b=W(b)\bunit$ and the minimal Maslov index of $L$ is nonnegative. Then there exists $b^+\in CF^{odd}(L;\Lambda_0)$ such that $\fm_0^{b^+}(1)=W^{\wT}(b)\wunit$ for some $W^{\wT}(b)\in \Lambda_0$, i.e., $(CF^{\bullet}(L;\Lambda_0),\fm)$ is weakly unobstructed. In particular, if the minimal Maslov index of $L$ is at least two, then, $W^{\wT}(b)=W(b)$.
\end{lemma}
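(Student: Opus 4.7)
The strategy is to construct $b^+$ as a $\bT$-adic limit $b+\sum_{i\ge 1}\xi_i$ with $\xi_i\in CF^{\mathrm{odd}}(L;\Lambda_+)$ of strictly increasing Novikov valuations, using the homotopy $\gunit$ between $\bunit$ and $\wunit$ to transfer the curvature from $\bunit$ to the strict unit $\wunit$. The key algebraic input is the Morse-model analogue of \eqref{eq:m_1_f}, namely $\fm_1(\gunit)=(-1)^d(\wunit-\bunit)+\bm{h}$, where the constant disc $\beta_0$ contributes the leading piece $\fm_{1,\beta_0}(\gunit)=(-1)^d(\wunit-\bunit)$ and $\bm{h}\in\Lambda_+\cdot CF^{\bullet}(L;\Lambda_0)$ collects the non-constant disc corrections.

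The leading adjustment sets $b^+_1:=b+(-1)^d W(b)\gunit$, which lies in $CF^{\mathrm{odd}}(L;\Lambda_+)$ since $|\gunit|=-1$. Expanding $\fm_0^{b^+_1}(1)$ via the formula \eqref{eq:deformation}, the cross term $(-1)^d W(b)\fm_{1,\beta_0}(\gunit)=W(b)(\wunit-\bunit)$ cancels the $\bunit$-part of $W(b)\bunit$ and produces $W(b)\wunit$, while all remaining contributions assemble into a residue $R_1\in\Lambda_+\cdot CF^{\bullet}(L;\Lambda_0)$. Proceed by induction: assume $b^+_N$ satisfies $\fm_0^{b^+_N}(1)=W_N\wunit+R_N$ with $\val(R_N)\ge\lambda_N$. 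The curved $A_\infty$-relation at $k=0$, combined with strict unitality $\fm_k(\ldots,\wunit,\ldots)=0$ for $k\ne 2$, gives $\fm_1^{b^+_N}(R_N)=0$, so the leading $\bT^{\lambda_N}$-coefficient $\bar R_N$ is a Morse cocycle. Since $\bunit-\wunit$ is $\delta$-exact via $\gunit$, and every contribution in $R_N$ traces back either to a $\gunit$-insertion or to a quantum correction $\bm{h}$, the class of $\bar R_N$ modulo $\C\wunit$ vanishes in $H^{\bullet}(L;\C)$; choose $\xi_{N+1}\in CF^{\mathrm{odd}}(L;\C)$ and $c_N\in\C$ with $\bar R_N=\delta(\xi_{N+1})+c_N\wunit$, and set $b^+_{N+1}:=b^+_N-\bT^{\lambda_N}\xi_{N+1}$ together with $W_{N+1}:=W_N+\bT^{\lambda_N}c_N$. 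The new residue has strictly greater Novikov valuation.

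The partial sums form a Cauchy sequence in the $\bT$-adic topology and converge to the desired $b^+\in CF^{\mathrm{odd}}(L;\Lambda_+)$ with $\fm_0^{b^+}(1)=W^\wT(b)\wunit$, where $W^\wT(b)=\lim_N W_N$. For the final assertion, suppose the minimal Maslov index is at least two. Any single $\gunit$-insertion into $\fm_k$ with the remaining $k-1$ entries equal to $b$ has output of degree $-\mu(\beta)\le -2$, so the only degree-$0$ contribution to $\fm_0^{b^+_N}(1)$ comes from the constant-disc term $\fm_{1,\beta_0}(\gunit)$. Hence the degree-$0$ coefficient stays fixed at $W(b)$ throughout the iteration, giving $W^\wT(b)=W(b)$.

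The main technical obstacle is the cohomological decomposition $\bar R_N=\delta(\xi_{N+1})+c_N\wunit$ in the induction. One must check that each degree component of $\bar R_N$ is either a multiple of $\wunit$ (in degree zero) or $\delta$-exact (in other degrees). This rests on the cohomological triviality of $\bunit-\wunit=(-1)^d\delta(\gunit)$ combined with an induction on Novikov valuation, while the minimal-Maslov-index assumption restricts the degrees in which $R_N$ can be supported, preventing genuinely obstructing classes in $H^{\ne 0}(L;\C)$ from entering the iteration.
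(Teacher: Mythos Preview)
Your iterative obstruction-theory approach is more complicated than necessary, and the crucial step you flag as ``the main technical obstacle'' is not actually carried out. You assert that $\bar R_N=\delta(\xi_{N+1})+c_N\wunit$ because ``every contribution in $R_N$ traces back either to a $\gunit$-insertion or to a quantum correction $\bm{h}$,'' but this is not an argument for exactness: a priori $R_N$ could have components in positive degrees representing nontrivial Morse cohomology classes, and nothing you wrote rules that out. The fix is the degree computation you only gesture at in the final paragraph: with $b$ in degree $1$ and $\gunit$ in degree $-1$, a term $\fm_{k,\beta}(\ldots)$ with $k_2$ copies of $\gunit$ and $k-k_2$ copies of $b$ has output degree $2-2k_2-\mu(\beta)$. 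Since $\mu(\beta)\ge 0$, terms with $k_2\ge 2$ land in degree $\le -2$ and vanish; terms with $k_2=1$ land in degree $\le 0$, hence (apart from the $\wunit$-piece of $\fm_{1,\beta_0}(\gunit)$) are multiples of $\bunit$; and the $k_2=0$ terms are already accounted for by the hypothesis $\fm_0^b(1)=W(b)\bunit$. This analysis is what makes your induction go through, and without it the proof is incomplete.

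The paper exploits exactly this degree analysis to avoid iteration entirely. Once one knows that the $\gunit$-dependence of $\fm_0^{b+c\gunit}(1)$ is \emph{linear} in $c$ (because $k_2\ge 2$ terms vanish) and that the one-$\gunit$ terms assemble into $c(\wunit-(1-h(b))\bunit)$ for some $h(b)\in\Lambda_+$, one simply solves $W(b)-c(1-h(b))=0$ in one step: set $b^+=b+\frac{W(b)}{1-h(b)}\gunit$ and obtain $\fm_0^{b^+}(1)=\frac{W(b)}{1-h(b)}\wunit$ directly. No $\bT$-adic limit is needed. Your approach, once the degree analysis is supplied, would converge to the same answer, but the paper's route is both shorter and gives the explicit formula $W^{\wT}(b)=W(b)/(1-h(b))$ immediately. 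Note also a minor sign slip: the paper has $\fm_{1,\beta_0}(\gunit)=\wunit-\bunit$ without your factor $(-1)^d$, since $\tilde\fm_{1,\beta_0}=(-1)^d\partial$ and $\partial\bm{f}=(-1)^d(\bm{e}^+-\bm{e})$.
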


\begin{proof}
By \eqref{eq:m_1_f}, we have
\begin{equation}
\label{eq:m_1_f_2}
\fm_1(\gunit)= \fm_{\Gamma_0}(\gunit) + \sum_{\Gamma \in\bm{\Gamma}_{2} - \{\Gamma_0\}} \bT^{\omega_X(\Gamma)} \fm_{\Gamma}(\gunit) =  \wunit-\bunit + \sum_{\Gamma \in\bm{\Gamma}_{2} - \{\Gamma_0 \}} \bT^{\omega_X(\Gamma)} \fm_{\Gamma}(\gunit), 
\end{equation}
where $\Gamma_0$ is the unique tree with no interior vertices. Let $\Gamma\in\bm{\Gamma}_{2}$ be a decorated tree. Since the minimal Maslov index of $L$ is assumed to be nonnegative, by the dimension formula \eqref{eq:dim_formula}, $\fm_{\Gamma}(\gunit)$ has degree at most zero. If the Maslov index $\mu(\Gamma)>0$, $\fm_{\Gamma}(\gunit)$ is a singular chain of negative degree and its projection to $CF^{\bullet}(L;\Lambda_0)$ vanishes. This means $\fm_{\Gamma}(\gunit)$ is of degree zero and hence a multiple of $\bunit$. (We note that while $\wunit$ is also in degree zero, it is not a critical point so that it does not appear in $\fm_{\Gamma}(\gunit)$. It only appears as an output for $\fm_2(\wunit,\wunit)$.) Moreover, since $\omega_X(\Gamma)>$0 for $\Gamma \ne \Gamma_0$, we can rewrite \eqref{eq:m_1_f_2} as
\begin{equation}
\label{eq:m_1_f_3}
\fm_1(\gunit)=\wunit-(1-h)\bunit,
\end{equation}
for some $h\in \Lambda_+$. 	

Let us tentatively take $b'=b+\gunit$ and write
\[
\fm_0^{b'}(1)=\fm_0(1)+\fm_1(b+\gunit)+\fm_2(b+\gunit,b+\gunit)+\ldots.
\]
By the assumption that minimal Maslov index of $L$ is nonnegative and the dimension formula \eqref{eq:dim_formula}, the $\fm_k$ operations with more than one $\gunit$ as  inputs vanish since the outputs are of degrees at most $(-2)$. Thus, we can write
\[
\fm_0^{b'}(1)=W(b)\bunit+ \fm_{1}^b (\gunit),
\]
where 
\begin{align*}
\fm_{1}^b (\gunit) &= \fm_1(\gunit) + \sum_{\ell_1 + \ell_2 \ge 1} \fm_{1+\ell_1+\ell_2}(b^{\otimes \ell_1}, \gunit, b^{\otimes \ell_2})\\
&= \wunit-(1-h)\bunit +  \sum_{\ell_1 + \ell_2 \ge 1} \sum_{\Gamma\in\bm{\Gamma}_{\ell_1 + \ell_2+2}} \bT^{\omega_X(\Gamma)} \fm_{\Gamma}(b^{\otimes \ell_1}, \gunit, b^{\otimes \ell_2}).
\end{align*}
Since the terms in the summation are of degree $0$, the sum is a multiple of $\bunit$. We can write 
\[
\fm_{1}^b (\gunit)= \wunit-(1-h(b))\bunit,
\]
for some $h(b) \in \Lambda_0$.  Suppose we have $\Gamma\in\bm{\Gamma}_{\ell_1 + \ell_2+2}$ with $\omega_X(\Gamma)=0$, then the interior vertices of $\Gamma$ are decorated by the constant disc class, and $\fm_{\Gamma}(b^{\otimes \ell_1}, \gunit, b^{\otimes \ell_2})$ counts Morse flow trees ending at the maximal point $\bunit$. Since our flow is decreasing, it is easy to see that such flow trees do not exist and hence $\fm_{\Gamma}(b^{\otimes \ell_1}, \gunit, b^{\otimes \ell_2})=0$ when $\omega_X(\Gamma)=0$. This means $h(b)\in \Lambda_+$.

Now, by setting $b^+=b+\frac{W(b)}{1-h(b)}\gunit$, we get
\begin{equation*}
\fm_0^{b^+}(1)=W(b)\bunit+\frac{W(b)}{1-h(b)}\left(\wunit-(1-h(b))\bunit\right)=\frac{W(b)}{1-h(b)}\wunit=:W^{\wT}(b)\wunit.
\end{equation*}
Furthermore, 
If the minimal Maslov index of $L$ is at least two, then $h(b)=0$ by the dimension formula \eqref{eq:dim_formula}, and hence $W^{\wT}(b)=W(b)$.
\end{proof}	

Let $\mathcal{MC}^{\blT}(L)$ the space of elements $b\in CF^{1}(L;\Lambda_+)$ which satisfy
\[
\fm^b_0(1)=W(b)\bunit  \text{ for some } W(b)\in \Lambda_0.
\]
By the above lemma, for each such $b$, there exists $b^+\in CF^{odd}(L;\Lambda_0)$ such that $\fm_0^{b^+}(1)=W^{\wT}(b)\wunit$. Thus $W^{\wT}(b)$ can be viewed as a function on $\mathcal{MC}^{\blT}(L)$.

\begin{defn} \label{def:disc_potential}
We call $W^L:=W^{\wT} \colon \mathcal{MC}^{\blT}(L)\to \Lambda_0$ the \emph{disc potential} of $L$. 
\end{defn}


\section{A Morse model for equivariant Lagrangian Floer theory} \label{sec:Morse} 

There are several approaches to $G$-equivariant Lagrangian Floer theory for a pair of G-invariant Lagrangians in the existing literature.  For $G=\Z_2$, Seidel-Smith \cite{SS10} used Floer homology coupled with Morse theory on $EG$ to define $G$-equivariant Lagrangian Floer homology of exact Lagrangians. 
In \cite{HLS16a,HLS16b},  Hendricks-Lipshitz-Sarkar used a homotopy theoretic method to define $G$-equivariant Lagrangian Floer homology for a compact Lie group  $G$. Daemi-Fukaya \cite{DF17} defined an equivariant de Rham model using $G$-equivariant Kuranishi structures developed in \cite{FOOO, G-Kuranishi}. Bao-Honda \cite{BH18} defined equivariant Lagrangian Floer cohomology in the case of finite group action via semi-global Kuranishi structures.

In this section, we develop a Morse model for the $G$-equivariant Lagrangian Floer theory focusing on a single $G$-invariant Lagrangian. 
The underlying cochain complexes can be constructed such that each complex is finite dimensional over the cohomology ring of $BG$, and the $A_\infty$-maps are given by counting pearly trees in the Borel construction $L_G$.  This suits better for our purpose of computing disc potentials and constructing SYZ mirrors. The $A_{\infty}$-algebra we construct will be unital. This uses the homotopy unit construction in \cite{FOOO}, which has also been adapted to the stabilizing divisor perturbation scheme by Charest-Woodward \cite{CW15} for their (non-equivaraint) unital Morse model.  

We define the $G$-equivariant Lagrangian Floer theory as the (ordinary) Lagrangian Floer theory of $L_G$ as a Lagrangian submanifold of a certain symplectic manifold $X_G$. This avoids the issue of equivariant transversality.  The Morse model we use makes the theory much more explicit and computable.  Since the Lagrangians we study here bounds non-constant pseudo-holomorphic discs, we use the machinery of \cite{FOOO} to handle the obstructions.  On the other hand, for computing the equivariant disc potential of toric Fano manifolds, virtual technique is not necessary. 

\subsection{The equivariant Morse model}
\label{sec:G Morse model}
Let $G$ be a compact Lie group. We begin by choosing smooth finite dimensional approximations for the universal bundle $EG \to BG$ over the classifying space. Namely, we have the following diagram
\begin{equation}
\label{eq:EG(N)}
\begin{tikzcd}
EG(0) = G \arrow[hookrightarrow]{r} \arrow{d}   & EG(1) \arrow[hookrightarrow]{r} \arrow{d} & EG(2) \arrow[hookrightarrow]{r} \arrow{d} & \cdots \\
BG(0) = \pt \arrow[hookrightarrow]{r}    & BG(1) \arrow[hookrightarrow]{r}  & BG(2) \arrow[hookrightarrow]{r}  & \cdots 
\end{tikzcd}
\end{equation}
where $EG(N)$ and $BG(N)$ are compact smooth manifolds, and the horizontal arrows are smooth embeddings. 
We note that $BG(N)$ are connected for $N\ge 1$ since $EG(N)$ is $(N-1)$-connected.

We also consider the cotangent bundles $T^*EG(N)$ and $T^*BG(N)$ equipped with the canonical symplectic forms.
Every symplectic $G$-action on a symplectic manifold can be naturally lifted to a Hamiltonian $G$-action  on its cotangent bundle. 
We choose a moment map $\mu_N \colon T^*EG(N)\to\mathfrak{g}^*$ for the Hamiltonian $G$-action on $T^*EG(N)$ lifted from the $G$-action on $EG(N)$ where $\mathfrak{g}^*$ is the dual Lie algebra of $G$. 
Since $G$ acts on $T^*EG(N)$ freely, we have a canonical isomorphism
$$
T^*EG(N)\twobar G:=\mu_N^{-1}(0)/G\cong T^*BG(N),
$$
as symplectic manifolds. 
We denote by $J_{T^*BG(N)}$ the almost complex structure induced by a $G$-invariant compatible almost complex structure on $T^*EG(N)$. Notice that $J_{T^*BG(N)}$ is compatible with the canonical symplectic form on $T^*BG(N)$.

Let $(X,\omega_X)$ be a symplectic $G$-manifold, i.e. $X$ is a symplectic manifold endowed with a $G$-action which preserves $\omega_X$. As in Section~\ref{sec:simplicial}, $X$ is assumed to be convex at infinity or geometrically bounded if it is non-compact. 
Let $L$ be a $G$-invariant, closed, connected, and relatively spin Lagrangian submanifold $X$. We will assume the relative spin structure is preserved by the $G$-action. 
Let us fix a $G$-invariant almost complex structure $J_X$ compatible with $\omega_X$.  
In this section, we will define a Morse model for the $G$-equivariant Lagrangian Floer theory on $L$.

Let $X(N)=X\times_G \mu_N^{-1}(0)$ and set $X_G=\lim_{\to}X(N)$. Since $G$ acts freely on $\mu_N^{-1}(0)$, there is a canonical map $\pi \colon X(N)\to T^*BG(N)$, which is a fiber bundle with the fiber $X$. 
Let $\iota \colon X\to X(N)$ be inclusion  of a fiber. By the construction, $X(N)$ is endowed with a symplectic form $\omega_{X(N)}$ and a compatible almost complex structure $J_{X(N)}$ satisfying 
\begin{equation*}
 \iota^*\omega_{X(N)}=\omega_X, \quad J_{X(N)}|_{X(N-1)}=J_{X(N-1)}, \quad D\iota \circ J_X=J_{X(N)} \circ D\iota, \quad D\pi \circ J_{X(N)}=J_{T^*BG(N)} \circ D\pi. 
\end{equation*}

Let $L(N)=L\times _G EG(N)$ for $N = 0, 1, \dots$, which are finite dimensional approximations of the Borel construction $L_G=L\times _G EG$. We note that $L(N)\to BG(N)$ is a fiber bundle with the fiber $L$. 
Regarding $BG(N)$ as the zero section of $T^*BG(N)$, $L(N)$ is a Lagrangian submanifold of $X(N)$ for each $N$.
The diagram~\eqref{eq:EG(N)} induces a commutative diagram
\begin{equation}
\label{eq:inclusions}
\begin{tikzcd} 
L(0)=L \arrow[hookrightarrow]{r} \arrow{d}   & L(1) \arrow[hookrightarrow]{r} \arrow{d} & L(2) \arrow[hookrightarrow]{r} \arrow{d} & \cdots \\
X(0)=X  \arrow[hookrightarrow]{r}  & X(1)  \arrow[hookrightarrow]{r}   & X(2) \arrow[hookrightarrow]{r}  & \cdots 
\end{tikzcd}
\end{equation}
where the vertical arrows are embeddings of Lagrangian submanifolds. We note that the horizontal arrows in the bottom row of \eqref{eq:inclusions} depend on the choice of auxillary $G$-invariant metrics on $EG(N)$ compatible with restrictions. 

We note that $X(N)$ is neither convex at infinity nor geometrically bounded in general even if $X$ is. On the other hand, 
the following proposition shows that the image of any $J_{X(N)}$-holomorphic discs must be fully contained in a fiber of the map $\pi$ over $BG(N)$. It ensures compactness of the moduli spaces of pseudo-holomorphic discs bounded by $L(N)$. 

\begin{prop}[Effective disc classes]
	\label{prop:disc classes}
	Let $\pi \colon X(N) \to T^*BG(N)$ be the projection map. Then for every $J_{X(N)}$-holomorphic map $u \colon (\Sigma,\partial \Sigma)\to (X(N),L(N))$, the map $\pi\circ u \colon (\Sigma,\partial \Sigma)\to (X(N),L(N))\to (T^*BG(N), BG(N))$ is necessarily constant. In particular, we have a bijection of effective disc classes
	\begin{equation*}
	\label{eq:disc identification}
	\iota_N \colon H_2^{\mathrm{eff}}(X,L)\xrightarrow{\sim} H_2^{\mathrm{eff}}(X(N),L(N)).
	\end{equation*}
\end{prop}

\begin{proof}
	Suppose that there is a $J_{X(N)}$-holomorphic map $u \colon (\Sigma,\partial \Sigma)\to (X(N),L(N))$. 
	\begin{equation*}
\begin{tikzcd} 
(\Sigma, \partial \Sigma) \arrow[r,  "u"] \arrow[rd, "\pi \circ u" ']   & (X(N), L(N)) \arrow[d, "\pi"]  \\
\,    & (T^*BG(N), BG(N)) 
\end{tikzcd}
\end{equation*}
Then the composition $\pi\circ u$ is a $J_{T^*BG(N)}$-holomorphic disc in $T^*BG(N)$ bounded by the zero section $BG(N)$. Since $BG(N)$ is an exact Lagrangian in $T^*BG(N)$, it does not bound any non-constant pseudo-holomorphic discs. It implies that $\pi\circ u$ is necessarily constant and $\Image(u)$ is contained in a fiber of $\pi$ over $BG(N)$.
\end{proof}

Proposition~\ref{prop:disc classes} has the following corollaries that will play an important role in computing the disc potential of $L(N)$ later on.

\begin{corollary}[Maslov index]\label{cor_mas}
	The Maslov index of $\beta\in H_2^{\mathrm{eff}}(X,L)$ is equal to the Maslov index of $\iota_N(\beta) \in H_2^{\mathrm{eff}}(X(N),L(N))$. 
\end{corollary}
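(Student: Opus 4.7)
The plan is to exploit the fiber bundle structure $\pi \colon X(N) \to T^*BG(N)$ together with Proposition \ref{prop:disc classes}, which forces any $J_{X(N)}$-holomorphic representative $u$ of $\iota_*\beta$ to lie in a single fiber of $\pi$ (with $\pi \circ u$ constant). First, I would use this to decompose the relevant symplectic vector bundle and Lagrangian boundary condition pulled back by $u$. Concretely, along the disc, the splitting
\[
u^* TX(N) \;=\; u^*TX \,\oplus\, u^*\pi^* T(T^*BG(N))
\]
is a direct sum of symplectic subbundles, and along $\partial D^2$ it restricts to a direct sum of Lagrangian subbundles
\[
(u|_{\partial D^2})^* TL(N) \;=\; (u|_{\partial D^2})^* TL \,\oplus\, (u|_{\partial D^2})^* \pi^* TBG(N),
\]
since $BG(N)$ is the zero section of $T^*BG(N)$, hence Lagrangian.

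Next, I would invoke additivity of the Maslov index under direct sum decompositions of symplectic bundle pairs with Lagrangian boundary conditions: $\mu(\iota_*\beta)$ equals the sum of the Maslov index of the first summand and that of the second. The first summand is pulled back from $X$ via $\iota$, so it contributes exactly $\mu(\beta)$. For the second summand, the key observation is that $\pi \circ u$ is constant; therefore $u^*\pi^* T(T^*BG(N))$ is a trivial symplectic vector bundle over $D^2$, and the Lagrangian boundary condition $(u|_{\partial D^2})^*\pi^*TBG(N)$ extends as a constant Lagrangian subbundle over the whole disc. A loop of Lagrangians that extends to the disc has Maslov index zero.

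Combining these two facts yields $\mu(\iota_*\beta) = \mu(\beta) + 0 = \mu(\beta)$. The only mildly delicate step is to make the additivity of Maslov indices for direct sums precise (so that the decomposition over the disc is honestly a symplectic direct sum and the restriction to the boundary is an honest Lagrangian direct sum); this is a standard property of the Maslov index that can be cited, so I do not expect genuine difficulty. The real content of the corollary is Proposition \ref{prop:disc classes} itself, which trivializes the normal factor.
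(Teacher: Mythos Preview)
Your argument is correct. The paper does not actually supply a proof of this corollary; it is stated as an immediate consequence of Proposition~\ref{prop:disc classes}, and the bundle-pair splitting you use is precisely the one the paper writes out explicitly in the proof of the adjacent Corollary~\ref{regularity} on regularity. So your approach is the intended one.
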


By abuse of notation, we denote both the disc classes $\beta\in  H_2^{\mathrm{eff}}(X,L)$ and $\iota_N(\beta)\in H_2^{\mathrm{eff}}(X(N),L(N))$ by $\beta$. We also denote their Maslov index by $\mu(\beta)$ and symplectic area by $\omega(\beta)$.

\begin{corollary}[Regularity]\label{cor_reg}
	\label{regularity}
	A $J_{X(N)}$-holomorphic disc $u \colon (\Sigma,\partial \Sigma)\to (X(N),L(N))$ is regular if it is regular as a disc in the corresponding fiber of $\pi$.
\end{corollary}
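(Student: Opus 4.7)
The plan is to reduce surjectivity of the linearized Cauchy--Riemann operator at $u$ on $(X(N),L(N))$ to surjectivity of the linearized operator at $u$ on $(X,L)$, using the fiber bundle structure $\pi\colon X(N)\to T^*BG(N)$ and the key fact, guaranteed by Proposition~\ref{prop:disc classes}, that $\pi\circ u$ is constant at some point $x_0\in BG(N)$.

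First I would write down the vertical-horizontal short exact sequence of vector bundles over $X(N)$,
\[
0\to T^{vert}X(N)\to TX(N)\to \pi^*T(T^*BG(N))\to 0,
\]
which restricts over $L(N)$ to an analogous sequence with $T^{vert}L(N)$ and $\pi^*TBG(N)$. Pulling back along $u$, and using that the image of $u$ lies in the single fiber $X$ over $x_0$, I would identify $u^*T^{vert}X(N)=u^*TX$ and observe that $u^*\pi^*T(T^*BG(N))$ is the trivial bundle $\underline{T_{x_0}T^*BG(N)}$, with the Lagrangian boundary condition reducing to the constant totally real subspace $\underline{T_{x_0}BG(N)}$. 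This yields a short exact sequence of bundle pairs
\[
0\to (u^*TX,u^*TL)\to (u^*TX(N),u^*TL(N))\to (\underline{T_{x_0}T^*BG(N)},\underline{T_{x_0}BG(N)})\to 0.
\]

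Next I would check that the linearized Cauchy--Riemann operator $D\bar\partial_u$ on $X(N)$ respects this short exact sequence. This uses the compatibility relations $J_{X(N)}|_X=J_X$ (so the vertical subbundle is $J_{X(N)}$-invariant) and $D\pi\circ J_{X(N)}=J_{T^*BG(N)}\circ D\pi$ (so the horizontal quotient inherits a well-defined $\bar\partial$). After passing to appropriate Sobolev completions, this gives a short exact sequence of Fredholm operators whose quotient operator is the standard $\bar\partial$-operator on the trivial bundle over the disc with constant totally real boundary $T_{x_0}BG(N)$. That quotient operator is surjective: its kernel consists of $T_{x_0}BG(N)$-valued constants (by the reflection principle), whose real dimension already matches the Fredholm index $\dim_\R BG(N)$, so the cokernel vanishes.

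Finally, the snake lemma applied to the induced short exact sequence of operators yields the exact sequence
\[
\cdots\to \mathrm{coker}(D\bar\partial_u|_X)\to \mathrm{coker}(D\bar\partial_u|_{X(N)})\to \mathrm{coker}(D\bar\partial_0)\to 0.
\]
Since the last cokernel is zero by the previous step, surjectivity of $D\bar\partial_u|_X$ (the assumed regularity in the fiber) forces surjectivity of $D\bar\partial_u|_{X(N)}$, which is the desired regularity of $u$ as a disc in $X(N)$. The only delicate point I expect is the verification that the short exact sequence of pullback bundles is genuinely preserved by the linearized operator at the Sobolev level; this is a routine but careful check using the compatibility of $J_{X(N)}$ with $\pi$ and the triviality of the horizontal bundle along the constant map $\pi\circ u$.
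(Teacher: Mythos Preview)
Your proposal is correct and follows essentially the same approach as the paper: decompose the linearized operator along the vertical/horizontal exact sequence, use that $\pi\circ u$ is constant to trivialize the horizontal bundle pair and conclude surjectivity of its $\bar\partial$-operator, and deduce surjectivity of the full operator from surjectivity in the fiber. The only cosmetic difference is that the paper chooses an explicit splitting $E=E_1\oplus E_2$ and argues via a direct sum decomposition $(\bar\partial_1,\bar\partial_2)$, whereas you keep the short exact sequence and invoke the snake lemma; your version is arguably cleaner since it only requires the operator to be triangular with respect to the sequence rather than diagonal under a splitting.
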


\begin{proof}
	Let $E:=u^*TX(N)$ and $F:=(\partial u)^*TL(N)$. We denote by $A^0(E,F)$ the space of smooth global sections of E with boundary values in $F$ and by $A^1(E)$ the space of smooth global $(0,1)$-forms with coefficient in $E$. Consider the two term elliptic complex
	\begin{equation}
	\label{eq:dbar}
	A^0(E,F)\xrightarrow{\bar{\partial}}A^1(E)
	\end{equation}
	where $\bar{\partial}$ is the linearized Cauchy-Riemann operator at $u$. Pulling the following exact sequences
	\[
	0\to TX\to TX(N)\to TT^*BG(N)\to 0
	\]
	and
	\[
	0\to TL\to TL(N)\to TBG(N)\to 0
	\]	
	via $u$, we choose splittings
	\[
	E=E_1\oplus E_2:=u^*TX\oplus (\pi\circ u)^*T(T^*BG(N))
	\]
	and
	\[
	F=F_1\oplus F_2:= (\partial u)^* TL\oplus (\pi\circ \partial u)^* TBG(N),
	\]
	where $F_i\subset E_1$ and $F_2\subset E_2$ are subbundles. This gives a decomposition of (\ref{eq:dbar})
	\[
	A^0(E_1,F_1)\oplus A^0(E_2,F_2) \xrightarrow{(\bar{\partial}_1, \bar{\partial}_2)}A^1(E_1)\oplus A^1(E_2).
	\]
	By Proposition \ref{prop:disc classes}, $\pi\circ u$ is a constant map and hence $\bar{\partial}_2$ is surjective. Therefore, $\bar{\partial}$ is surjective if and only if $\bar{\partial}_1$ is surjective. 
\end{proof}

\begin{corollary}
We have
\begin{equation*}
	\cM_{k+1}(\beta; L(N))= \cM_{k+1}(\beta;L)\times_G EG(N)
\end{equation*}
as topological spaces. 
\end{corollary}


\begin{remark}
If we have a $G$-equivariant Kuranishi structure and a $G$-equivariant transverse perturbation $\fs$ for $\cM_{k+1}(\beta;L)$ (see \cite{G-Kuranishi}), we can form the fiber product 
\[
 \cM_{k+1}(\beta;L)^{\fs} \times_G EG(N).
\]
This was used in \cite{DF17} to directly to define $G$-equivariant Floer theory as the limit of sequence of $A_{\infty}$-algebras for $L(N)$. On the other hand, our construction will be more flexible in the sense that we do not require the use of $G$-equivariant Kuranishi structures. Instead, we will choose Kuranishi structures and transverse perturbations for the moduli spaces $\cM_{k+1}(\beta; L(N))$ in a compatible manner so that the resulting $A_{\infty}$-algebras for $L(N)$ has a well-defined limit.
\end{remark}


Our heuristic definition of a $G$-equivariant Lagrangian Floer theory is the (ordinary) Lagrangian Floer theory of the Lagrangian submanifold $L_G\subset X_G$. However, since $L_G$ is infinite dimensional, we resort to using finite dimensional approximations. Clearly, arbitrary choices of Morse-Smale pairs $\{(f_N,\mathscr{V}_N)\}_{N\in\bN}$ and Kuranishi perturbations would not suffice for this purpose. We begin with choices of Kuranishi structures on the moduli spaces $\cM_{k+1}(\beta;L(N))$. 

Let us first recall that for an element $p=[\Sigma,\vec{z},u]\in\cM_{k+1}(\beta;L)$, a Kuranishi chart around $u$ is a quintuple $(V_p,E_p,\Gamma_p,\psi_p,s_p)$, where $\Gamma_p$ is the finite automorphism group of $(\Sigma,\vec{z},u)$ acting on the vector bundle $E_p \to V_p$, $V_p$ is a smooth manifold with corners parameterizing smooth stable maps close to $u$ , $s_p=\bar{\partial}_{J_X}$ is a $\Gamma$-equivariant smooth section of $E_p \to V_p$, and $\psi_p$ is a homeomorphism from $s^{-1}_p\{0\}/\Gamma_p$ to a neighborhood of $p$.  

We fix weakly submersive Kuranishi structures for the disc moduli $\cM_{k+1}(\beta;L)$ following \cite[Chapter ~7.1]{FOOO}. For $N\ge 1$, 
since the holomorphic discs with boundary on $L(N)$ are contained in the fibers of $X(N)\to T^*BG(N)$ over $BG(N)$, there are no obstruction in the base direction, and therefore we can choose Kuranishi structures to be of the following form: For every $N\ge 1$ and $q\in BG(N)$, we choose a contractible neighborhood $U_q$ of $q$ in $BG(N)$ and local trivializations of $X(N)$ and $L(N)$ over $U_q$,  compatible with the diagram \eqref{eq:inclusions}. This in turn gives a local trivialization $\cM_{k+1}(\beta;L)\times U_q$ of the fiber bundle $\cM_{k+1}(\beta;L(N))\to BG(N)$. Then, for an element $p$ in a fiber of $\cM_{k+1}(\beta;L)\times U_q\to U_q$, we can take the (weakly submersive) Kuranishi chart around $p$ to be 
\[
(V_p\times U^{k+1}_q,E_p\oplus T_{q}BG(N)^{k},\Gamma_p,(\psi'_p,id),\bar{\partial}_{J_{X(N)}}),
\]
where $\psi'_p$ is the pullback of $\psi_p$ via the projection $s^{-1}_p(0)/\Gamma_p\times U_q^{k+1}\to  s^{-1}_p(0)/\Gamma_p$. 
Then we have 
\begin{equation}\label{equ_equasKust}
\cM_{k+1}(\beta;L(N-1))=\cM_{k+1}(\beta;L(N))\times_{L(N)} L(N-1),
\end{equation}
as Kuranishi structures, where the fiber product is taken over the evaluation map $\ev_0$ at the $0$-th marked point $z_0$ and the inclusion map.  Moreover, we can orient the moduli spaces using a $G$-invariant relative spin structure such that the orientations are compatible with the above fiber product.

We now specify our choice of Morse-Smale pairs. The choice will ensure that the Morse models obtained from applying homological perturbations are compatible. 

\begin{defn} \label{def:morse-smale}
We call a sequence of Morse-Smale pairs $\{(f_N,\mathscr{V}_N)\}_{N\in\bN}$ \emph{admissible} if it satisfies the following:	\begin{enumerate}[label=\textnormal{(\arabic*)}]
	\item  	\label{assum:1}
		For each $N\in \bN$, there is an inclusion of critical point sets $\Crit(f_{N})\subset \Crit(f_{N+1})$. Under this identification, we have 
		\begin{enumerate}[label=(\roman*)]
			\item For $p\in \Crit(f_{N})$,  $W^u(f_{N+1};p)\times_{L(N+1)}L(N)=W^u(f_{N};p)$. 
			\item For $p\in \Crit(f_{N})$, the image of $W^s(f_{N};p)$ in $L(N)$  coincides with $W^s(f_{N+1};p)$ in $L(N+1)$.
			\item For $q\in L(N+1)\setminus L(N)$, we have $\Phi_t(q)\notin L(N)$ for all $t\ge 0$. This implies 
			\[
			W^u(f_{N+1};p)\times_{L(N+1)}L(N)=\emptyset
			\]
			for $p\in\Crit(f_{N+1})\setminus\Crit(f_{N})$.
		\end{enumerate}	
			
		\item \label{assum:2} 
		For each $\ell\ge 0$, there exists an integer $N(\ell)>0$ such that $|p|>\ell$ for all $N\ge N(\ell)$ and $p\in\Crit(f_N)\setminus\Crit(f_{N-1})$.	
		
		\item 
		The Morse function $f_N$ has a unique maximal point $\bunit_{L(N)}$ and the inclusion $\Crit(f_{N})\subset \Crit(f_{N+1})$ identifies $\bunit_{L(N)}$ with $\bunit_{L(N+1)}$. 
	\end{enumerate}
\end{defn}

This allows us to identify $CF^{\bullet}(L(N);\Lambda_0)$ with a subcomplex of $CF^{\bullet}(L(N+1);\Lambda_0)$. We will denote $\bunit_{L(N)}$, $\wunit_{L(N)}$ and $\gunit_{L(N)}$ simply by $\bunit$, $\wunit$, and $\gunit$. 			
We explain below how such a sequence of Morse-Smale pairs $\{(f_N,\mathscr{V}_N)\}_{N\in\bN}$ can be produced. This is inspired by the family Morse theory of Hutchings \cite{hutchings08}. 

\begin{prop} \label{prop:admissible}
There exists a finite dimensional approximation of $BG$ by smooth manifolds $BG(N)$ which has an admissible sequence of Morse-Smale pairs.
\end{prop}

\begin{proof}
We choose $E(U(k))$ to be the infinite Stiefel manifold $V_k(\C^{\infty})=\lim_{\to} V_k(\C^{k+N}) $ and $B(U(k))$ to be the infinite Grassmannian $Gr(k,\C^{\infty})=\lim_{\to} Gr(k,\C^{k+N})$. 
We have an embedding of $Gr(k,\C^{\infty})$ into the skew-Hermitian matrices on $\C^{\infty}$ by identifying $V\in Gr(k,\C^{\infty})$ with the orthogonal projection $P_V$ onto $V$. Let $A$ be the diagonal matrix with entries $\{1,2,3,\ldots\}$. The map 
\begin{equation} \label{eq:perfect_Gr}
f_{Gr(k,\C^{\infty})} \colon Gr(k,\C^{\infty})\to \R, \quad f_{Gr(k,\C^{\infty})}(V)= -\re (\tr(AP_V)),
\end{equation}
is a perfect Morse function (see \cite{nicolaescu94}). We note that $f_{Gr(k,\C^{\infty})}$ has the following properties:
\begin{enumerate}
	\item The restriction 
	\[
	f_{Gr(k,\C^{k+N})}=f_{Gr(k,\C^{\infty})}|_{Gr(k,\C^{k+N})}
	\]
	of $f_{Gr(k,\C^{\infty})}$ to each finite dimensional stratum is again a perfect Morse function.
	
	\item 	For all $p\in Gr(k,\C^{k+N})$ and $q\in Gr(k,\C^{k+N+1})\setminus Gr(k,\C^{k+N})$, we have 
	\[
	f_{Gr(k,\C^{k+N+1})}(q)\le f_{Gr(k,\C^{k+N})}(p).
	\]
	
	\item If a critical point $p$ of $f_{Gr(k,\C^{\infty})}$ is contained in $Gr(k,\C^{\infty})\setminus Gr(k,\C^{k+N})$, then $|p|> 2N$.
\end{enumerate}

We fix an embedding $G\to U(k)$ of Lie groups for some $k$, and choose   $EG=V_k(\C^{\infty})$ and $BG=V_k(\C^{\infty})/G$. This gives a fibration $\pi \colon BG\to Gr(k,\C^{\infty})$ with fibers the homogeneous space $U(k)/G$. Let $\{U_i\}_{i\in I}$ be an open cover for $Gr(k,\C^{\infty})$ such that each $U_i$ is contractible and $p\notin \overline{U}_i$ whenever $p\in \Crit(f_{Gr(k,\C^{\infty})})$ and $p\notin U_i$. Let $\{\rho_i\}_{i\in I}$ be a partition of unity subordinate to $\{U_i\}_{i\in I}$. Notice that $\rho_i$ are constant near the critical points of $f_{Gr(k,\C^{\infty})}$ due to our second assumption on the open cover. 

Let $f_{U(k)/G} \colon U(k)/G\to \R$ be a Morse function with a unique maximal point and define $F_{U(k)/G} \colon BG\to \R$ by
\[
F_{U(k)/G} = \sum_{i\in I} \rho_i f_{U(k)/G}.
\]
Here $\rho_i$ and $f_{U(k)/G}$ are understood as defined on the local trivializations $U(k)/G\times U_i$. 
For each $N\in\bN$, the restriction of $F_{U(k)/G,N}:=F_{U(k)/G}|_{BG(N)}$ to generic fibers of $BG(N)\to Gr(k,\C^{k+N})$ is a Morse function. In particular, the restriction of $F_{U(k)/G,N}$ to each fiber over a critical point of $f_{Gr(k,\C^{k+N})}$ agrees with $f_{U(k)/G}$.

We set 
\[
f_{BG(N)}=\pi^*f_{Gr(k,\C^{k+N})}+\epsilon F_{U(k)/G,N}, \quad \epsilon>0.
\]
Since $Gr(k,\C^{k+N})$ is compact, for sufficiently small $\epsilon$, $f_{BG(N)}$ has the critical point set 
\[
\Crit(f_{BG(N)})=  \Crit(f_{U(k)/G}) \times \Crit(f_{Gr(k,\C^{k+N})}) .
\]
Moreover, $f_{BG(N)}$ is a Morse function. The non-degeneracy of critical points follows from the fact that $\rho_i$ are constant near the critical points of $f_{Gr(k,\C^{\infty})}$. 

Now, let $V_{Gr(k,\C^{\infty})}$ be a negative pseudo-gradient vector field for $f_{Gr(k,\C^{\infty})}$ such that $(f_{Gr(k,\C^{\infty})},V_{Gr(k,\C^{\infty})})$ is a Morse-Smale pair, and let $V_{U(k)/G}$ be a fiberwise negative pseudo-gradient for $F_{U(k)/G}$. We set
\[
V_{BG}= V_{U(k)/G}+HV_{Gr(k,\C^{\infty})},
\] 
where $H$ denotes the horizontal lift with respect to an auxiliary connection for the fiber bundle $BG\to Gr(k,\C^{\infty})$. For each $N\in \bN$, we put $V_{BG(N)}=V_{BG}|_{BG(N)}$. Then, for a generic choice of $V_{U(k)/G}$, the pairs $\{(f_{BG(N)},V_{BG(N)})\}_{N\in \bN}$ are Morse-Smale.

Finally, let $f \colon L\to \R$ be a Morse function with a unique maximal point. By iterating the construction in the above paragraphs, we obtain Morse-Smale pairs $\{(f_N,\mathscr{V}_N)\}_{N\in \bN}$ with 
\begin{equation}\label{equ_prodcrit}
\Crit(f_{N})=\Crit(f)\times \Crit(f_{BG(N)}). 
\end{equation}
It follows from the properties of $f_{Gr(k,\C^{\infty})}$ that $\{(f_N,\mathscr{V}_N)\}_{N\in\bN}$ is an admissible collection in the sense of Definition \ref{def:morse-smale}.
\end{proof}



The next key proposition enables us to define the $A_{\infty}$-structure maps of $G$-equivariant Lagrangian Floer theory.

\begin{prop}
	\label{prop:extension}
	There exists a sequence of singular chain models $(C^{\bullet}(L(N);\Lambda_0)^+,\tilde{\fm}^{N,+})$ and a cochain map 
	\begin{equation} \label{eq:chain_projection}
	\pi_{N} \colon C^{\bullet}(L(N);\Lambda_0)^+ \to C^{\bullet}(L(N-1);\Lambda_0)^+
	\end{equation}
	satisfying the following property:
	\begin{equation}
	\label{eq:G_compatibility}
	\pi_{N}\left(\mathfrak{m}^{N}_{k,\beta}(P_1,\ldots,P_k)\right)=\fm^{N-1}_{k,\beta}(\pi_{N}(P_1),\ldots,\pi_{N}(P_k))
	\end{equation}
for $P_1,\ldots,P_k\in C^{\bullet}(L(N);\Lambda_0)^+$ and $\beta\in  H_2^{\mathrm{eff}}(X,L)$.
\end{prop}

\begin{proof}	
	We proceed by induction on $N$. When $N=0$, the statement of this proposition is void. Suppose that we have constructed $A_{\infty}$-algebras $(C^{\bullet}(L(N');\Lambda_0)^+,\tilde{\fm}^{N',+})$ and maps $\pi_{N'}$ for all $N' < N$ obeying~\eqref{eq:G_compatibility}. 
	
	We shall construct a complex $(C^{\bullet}(L(N);\Lambda_0)^+,\tilde{\fm}^{N,+})$ and a cochain map $\pi_N$ satisfying~\eqref{eq:G_compatibility} inductively on $g \ge -1$. More precisely, at the end of the induction process, we choose perturbations $\fs_{\fd,\beta,\ell,\vec{P}_{N}}$ with $\norm{(\fd,\beta)}=g$ and construct the set of singular chains $\mathcal{X}_{g}(L(N))$ in such a way that we have a map 
	\[
	\pi_{N,g} \colon \mathcal{X}_{g}(L(N))\to \mathcal{X}_{g}(L(N-1))
	\] 
	satisfying 
	\begin{equation} \label{eq:chain_map_proj}
	\pi_{N,g}(\partial P) =\partial 	\pi_{N,g}(P)
	\end{equation}
    for $P\in \mathcal{X}_{g}(L(N))$, and 
	\begin{equation} \label{eq:chain_compatibility}
	\pi_{N,g}\left((\ev_0)_*\left(\mathcal{M}_{\ell+1}(\beta;L(N);\vec{P}_{N})^{\fs_{\fd,\beta,\ell,\vec{P}_{N}}}\right) \right) = (\ev_0)_*\left(\mathcal{M}_{\ell+1}(\beta;L(N-1);\vec{P}_{N-1})^{\fs_{\fd,\beta,\ell,\vec{P}_{N-1}}}\right),
	\end{equation}
	for $\vec{P}_{N}=(P_{1,N},\ldots,P_{\ell,N})$, $P_{i,N}\in\mathcal{X}_{\fd(i)}(L(N))$, $\fd(i)< g$, $\vec{P}_{N-1}=(P_{1,N-1},\ldots,P_{\ell,N-1})$, and $P_{i,N-1}=\pi_{N, \fd(i)}(P_{i,N}) \in \mathcal{X}_{\fd(i)}(L(N-1))$. 
	
	For the base step $g=-1$, we start with a set of singular chains $\mathcal{X}_{-1}(L(N))$ satisfying 
	\[
	P\times_{L(N)} L(N-1)\in \mathcal{X}_{-1}(L(N-1)), 
	\]
	for all $P\in \mathcal{X}_{-1}(L(N))$. As an induction hypothesis, suppose that we have constructed $\fs_{\fd,\beta,\ell,\vec{P}_{N}}$ with $\norm{(\fd,\beta)}=g'$,  $\mathcal{X}_{g'}(L(N))$, and maps $\pi_{N,g'}$ satisfying \eqref{eq:chain_compatibility} for all $g'<g$. For  $\vec{P}_{N}=(P_{1,N},\ldots,P_{\ell,N})$, $P_{i,N}\in\mathcal{X}_{\fd(i)}(L(N))$, and $\beta\in  H_2^{\mathrm{eff}}(X,L)$ with $\norm{(\fd,\beta)}=g$, we have 
	\[
	\mathcal{M}_{\ell+1}(\beta;L(N);\vec{P}_N)\times_{L(N)} L(N-1)=\mathcal{M}_{\ell+1}(\beta;L(N-1);\vec{P}_{N-1})
	\]
	as compact subsets of $\mathcal{M}_{\ell+1}(\beta;L(N);\vec{P}_N)$. As in~\eqref{equ_equasKust}, we adorn Kuranishi structure on $\mathcal{M}_{\ell+1}(\beta;L(N))$ such that the above equality holds as Kuranishi structures.  
	
	We then choose a perturbation $\fs_{\fd,\beta,\ell,\vec{P}_{N}}$ by extending the perturbation $\fs_{\fd,\beta,\ell,\vec{P}_{N-1}}$ and the perturbation for the boundary strata $\partial\mathcal{M}_{\ell+1}(\beta;L(N);\vec{P}_{N})$. In order to define the map $\pi_{N,g}$, we add the following singular chains from  $\mathcal{M}_{\ell+1}(\beta;L(N);\vec{P}_{N})^{\fs_{\fd,\beta,\ell,\vec{P}_{N}}}$ to be elements of $\mathcal{X}_{g}(L(N))$: Set $d_n :=\dim_{\R}L(N)-\dim_{\R}L(N-1)$. For every simplex $\tau_{N-1}$ in $\mathcal{M}_{\ell+1}(\beta;L(N-1);\vec{P}_{N-1})^{\fs_{\fd,\beta,\ell,\vec{P}_{N-1}}}$, we have $\tau_N=[0,1]^{d_n}\times \tau_{N-1}$ in the tubular neighborhood of $\mathcal{M}_{\ell+1}(\beta;L(N-1);\vec{P}_{N-1})^{\fs_{\fd,\beta,\ell,\vec{P}_{N-1}}}$, such that the fiber product of $\tau_N \times_{L(N)} L(N-1)$ is of the form $(t_1,\ldots,t_{d_n})\times \tau_{N-1}$, where $(t_1,\ldots,t_{d_n})\in [0,1]^{d_n}$ is an interior point. We triangulate $\tau_N$ to regard it as a singular chain, then add $\tau_N$ and its faces to $\mathcal{X}_{g}(L(N))$. Note that we do not add the individual simplices on $\tau_N$. We then triangulate $\mathcal{M}_{\ell+1}(\beta;L(N);\vec{P}_{N})^{\fs_{\fd,\beta,\ell,\vec{P}_{N}}}$ extending the triangulation from the tubular neighborhood of $\mathcal{M}_{\ell+1}(\beta;L(N-1);\vec{P}_{N-1})^{\fs_{\fd,\beta,\ell,\vec{P}_{N-1}}}$ and the boundary strata, and add the remaining simplices to $\mathcal{X}_{g}(L(N))$. (We also extend perturbations for moduli spaces \eqref{eq:homotopy_moduli} which appear in the homotopy unit construction and add singular chains on it to $\mathcal{X}_{g}(L(N))$.) 
	
	For a singular chain $\sigma \in \mathcal{X}_{g}(L(N))$, we have either $\sigma\times_{L(N)} L(N-1)=\emptyset$ or $\sigma=[0,1]^{d_n}\times \tau$ and $\sigma\times_{L(N)} L(N-1)=\tau$ for some simplex $\tau\in \mathcal{X}_{g}(L(N-1))$. Thus, we can define the map $\pi_{N,g}$ by the fiber product 
	\[
		\pi_{N,g}(\sigma)= \sigma \times_{L(N)} L_{N-1}.
	\]
	By our construction, $\pi_{N,g}$ satisfies the properties \eqref{eq:chain_map_proj} and \eqref{eq:chain_compatibility}, and extends to a cochain map
	\[
	 \pi_{N,g} \colon C^{\bullet}_{(g)}(L(N);\Lambda_0)\to C^{\bullet}_{(g)}(L(N-1);\Lambda_0).
	\]
The induction on $g$ gives us a singular chain model $(C^{\bullet}(L(N);\Lambda_0)^+,\tilde{\fm}^{N,+})$ and a cochain map $\pi_N$ satisfying \eqref{eq:G_compatibility}. The map $\pi_N$ is defined by $\pi_{N,g}$ on the component $C^{\bullet}(L(N);\Lambda_0)$, and identity on the component $\Lambda_0 \cdot\bm{e}^+ \oplus \Lambda_0 \cdot\bm{f}$.
\end{proof}

We are now ready to define the $G$-equivariant Lagrangian Floer theory of $L$. The equivariant Floer complex $C^{\bullet}_G(L;\Lambda_0)$ is defined to be the inverse limit
\begin{equation}\label{eq_inverse_limit}
C^{\bullet}_G(L;\Lambda_0)=\lim_{\leftarrow} C^{\bullet}(L(N);\Lambda_0)^+
\end{equation}
along the cochain maps $\pi_N$ in~\eqref{eq:chain_projection}. By definition, an element in $C^{\bullet}_G(L;\Lambda_0)$ can be expressed as a sequence $(P_N)_{N\in \bN}\in \prod_{N\in \bN } C^{\bullet}(L(N);\Lambda_0)^+$ satisfying $\pi_N(P_N)=P_{N-1}$.
The $A_{\infty}$-structure maps $\tilde{\fm}^G=\{\tilde{\fm}^G_k\}$ on $C^{\bullet}_G(L;\Lambda_0)$ are defined as follows: For $(P_{N,1}), \ldots, (P_{N,k})\in C^{\bullet}_G(L;\Lambda_0)$, we have 
\[
\tilde{\fm}^G_k((P_{N,1}), \ldots, (P_{N,k}))= (\tilde{\fm}^{N,+}_k(P_{N,1},\ldots,P_{N,k}))_{N\in \bN }.
\]
Proposition \ref{prop:extension} ensures that the RHS is an element in $C^{\bullet}_G(L;\Lambda_0)$. It is straightforward to check that $(C^{\bullet}_G(L;\Lambda_0), \tilde{\fm}^G)$ is a unital $A_{\infty}$-algebra with the strict unit $(\bm{e}^+,\bm{e}^+,\ldots)$.

\begin{defn}
We refer to $(C^{\bullet}_G(L;\Lambda_0), \tilde{\fm}^G)$ as the \textit{G-equivariant singular chain model} of the pair $(L,G)$.
\end{defn}

We now define a $G$-equivariant Morse model, which is more amenable for the computations in the later sections. Let $\{(f_N,\mathscr{V}_N)\}_{N\in\bN}$ be an admissible sequence of Morse-Smale pairs in Definition~\ref{def:morse-smale}, and let $(CF^{\bullet}(L(N);\Lambda_0),\fm^N)$ be the unital Morse model obtain from the unital singular chain model $(C^{\bullet}(L(N);\Lambda_0)^+,\tilde{\fm}^{N,+})$ via homological perturbation. We have projection maps (by abuse of notation)
\[
\pi_N \colon CF^{\bullet}(L(N)) \to CF^{\bullet}(L(N-1)) 
\]
defined by $\pi_N(p)=p$ if $p$ is also a critical point of $f_{N-1}$ and $\pi_N(p)=0$, otherwise.
By setting 
\[
\mathcal{X}_{-1}(L(N))=\{\Delta_p \mid p\in\Crit(f_N)\}, \quad N\in\bN, 
\]
and including additional singular chains coming from the homological perturbation to $\mathcal{X}_{g}(L(N))$ in the inductive process of the proof of Proposition \ref{prop:extension}, the $A_{\infty}$-structure maps satisfy
\begin{equation} \label{eq:morse_extension}
\pi_N(\fm^N_k(p_1,\ldots,p_k))= \fm^{N-1}_k(  \pi_N(p_1),\ldots,\pi_N(p_k)), 
\end{equation}
for $p_1,\ldots,p_k\in  CF^{\bullet}(L(N);\Lambda_0)$. 

For the $G$-equivariant Morse model, instead of defining it to be the $A_{\infty}$-algebra structure on the inverse limit 
\[
\lim_{\leftarrow} CF^{\bullet}(L(N);\Lambda_0), 
\] 
we define $CF^{\bullet}_G(L;\Lambda_0)$ to be the submodule generated by elements $(p^N)_{N\in \bN}$ such that there exists $N_0\in \bN$ for which  $p^{N}=p^{N+1}$ for sufficiently large $N\ge N_0$. This allows us to identify an element in $CF^{\bullet}_G(L;\Lambda_0)$ with a finite $\Lambda_0$-linear combination of critical points of a Morse function $f_N$ for sufficiently large $N$. (We found this definition to be more convenient for applications, it is analogous to the choice of defining  $H^{\bullet}(\C\bP^{\infty};R)$ to be $R[\lambda]$ instead of $R[[\lambda]]$. )


The $A_{\infty}$-structure maps $\mathfrak{m}^G=\{\fm^G_k\}$ on $CF^{\bullet}_G(L;\Lambda_0)$ are defined in term of the maps  $\{\mathfrak{m}^G_{\Gamma}\}$ indexed by decorated rooted trees $\Gamma \in \bm{\Gamma}_{k+1}$ for $k\ge 0$.
For $p_1,\ldots,p_k\in  CF^{\bullet}_G(L;\Lambda_0)$, there exists a sufficiently large $N_0 \in \mathbb{N}$ such that $p_1,\ldots,p_k\in CF^{\bullet}(L(N_0);\Lambda_0)$. Setting
\begin{equation} \label{eq:output_deg}
\ell=\sum_{i=1}^k |p_i|+2-k-\mu(\Gamma),
\end{equation} 
we define $\mathfrak{m}^G_{\Gamma}$ by 
\begin{equation}
\label{eq:m^G_gamma}
\mathfrak{m}^G_{\Gamma}(p_1,\ldots,p_k)=\fm^{N(\ell)}_{\Gamma}(p_1,\ldots,p_k)
\end{equation}
where $N(\ell) (\ge N_0)$ comes from Definition~\ref{def:morse-smale}~\ref{assum:2}.
By Proposition~\ref{prop:extension} and \eqref{eq:morse_extension}, (\ref{eq:m^G_gamma}) is defined independent to the choice of $N(\ell)$. Moreover, the degree of \eqref{eq:m^G_gamma} is $\ell$ as defined in \eqref{eq:output_deg}. Finally, we define the maps $\mathfrak{m}^G_{k} \colon CF_G^{\bullet}(L;\Lambda_0)^{\otimes k}\to CF_{G}^{\bullet}(L;\Lambda_0)$ by
\begin{equation}
\label{eq:m^G_beta}
\mathfrak{m}^G_{k,\beta}=\sum_{\substack{\Gamma\in\bm{\Gamma}_{k+1},\beta=\sum \beta_{\eta(v)}}} \bT^{\omega(\beta)}\mathfrak{m}^G_{\Gamma},
\end{equation}
\begin{equation}
\label{eq:m^G}
\mathfrak{m}^G_{k}=\sum_{\beta} \mathfrak{m}^G_{k,\beta}.
\end{equation}
It is straightforward to see that $(CF^{\bullet}_G(L;\Lambda_0), \mathfrak{m}^G)$ is a unital $A_{\infty}$-algebra with a strict unit $\wunit$. 

\begin{defn} \label{def:G_Morse_model}
We refer to $(CF^{\bullet}_G(L;\Lambda_0), \mathfrak{m}^G)$ as the \emph{$G$-equivariant Morse model} of the pair $(L,G)$.
\end{defn}

\subsection{Equivariant parameters as homotopy partial units}
\label{sec:partial_units}

Consider a Lagrangian toric fiber $L$  of a compact semi-Fano toric manifold of the complex $d$ dimension. 
One of the main goals of this paper is to understand Givental's equivariant toric superpotential
\begin{equation}\label{equ_Giventalsuper}
W_{\lambda}=W^{\mathrm{toric}}+\sum_{i=1}^d \lambda_i x_i,
\end{equation}
as the disc potential of the $T$-equivariant Lagrangian Floer theory of $L$ constructed in Section~\ref{sec:G Morse model}. Here $W^{\mathrm{toric}}$ is the superpotential of Givental and Hori-Vafa \cite{Givental,HV,MS_big_book}. The terms $\lambda_1,\ldots,\lambda_d$ are the equivariant parameters generating the cohomology ring
\[
H^{\bullet}_T(\mathrm{pt})=H^{\bullet}(BT;\C)=\C[\lambda_1,\ldots,\lambda_d].
\]
Since $\lambda_1,\ldots,\lambda_d$ have cohomological degree $2$, the expression~\eqref{equ_Giventalsuper} of $W_{\lambda}$ suggests that the boundary deformations of curvature $\fm_0^T(1)$ are, a priori, \emph{obstructed}. For this reason, we shall construct in this section an alternative model which is homotopy equivalent to our equivariant Morse model and define its equivariant disc potential. Our construction in this section replies on the existence of a well-behaved perfect Morse function on $BG$ (e.g. \eqref{eq:perfect_Gr} for $G=U(k)$). For this reason, we will temporarily restrict the discussion to the case when $G$ is a product of unitary groups. 

To begin with, we fix a Morse function $f$ on $L$ with a unique maximum point $\bunit_L$. Following the proof of Proposition \ref{prop:admissible}, we can choose an admissible sequence of Morse-Smale pairs $\{(f_N,\mathscr{V}_N)\}_{N\in \bN}$ such that $f_N$ is of the form
\begin{equation}\label{eq_fN}
f_N=\pi_N^{*}\varphi_N+\phi_N
\end{equation}
where 
\begin{itemize}
\item $\varphi_N$ is a perfect Morse function on $BG(N)$. 
\item $\phi_N$ is a (generically) fiberwise Morse function for $L(N)\to BG(N)$.  In particular the restriction of $\phi_N$ to each fiber over a critical point of $\varphi_N$ agrees with $\epsilon f$ for some $\epsilon>0$. 
\end{itemize}
Since the choice~\eqref{eq_fN} satisfies~\eqref{equ_prodcrit}, we have
\begin{equation*}\label{equ_fgML0}
C^{\bullet}(f_N;\Lambda_0)=C^{\bullet}(f;\Lambda_0)\otimes_{\Lambda_0} H^{\bullet}(BG(N);\Lambda_0).
\end{equation*}
We then enlarge $C^{\bullet}(f_N;\Lambda_0)$ fiberwise over $\Crit(\varphi_N)$ to
\[
CF^{\bullet}(L(N);\Lambda_0)^{\dagger}=CF^{\bullet}(L;\Lambda_0)\otimes_{\Lambda_0} H^{\bullet}(BG(N);\Lambda_0),
\]
where
\[
CF^{\bullet}(L;\Lambda_0)=C^{\bullet}(f;\Lambda_0)\oplus \Lambda_0 \cdot \wunit_L \oplus  \Lambda_0 \cdot \gunit_L
\]
as in \eqref{equ_CFLlam0}. 

In this section, we will construct a unital $A_\infty$-algebra structure $\mathfrak{m}^{G,\dagger}$ on
\begin{equation}\label{equcffldag}
CF^{\bullet}_G(L;\Lambda_0)^{\dagger}=CF^{\bullet}(L;\Lambda_0)\otimes_{\Lambda_0} H^{\bullet}_G(\mathrm{pt};\Lambda_0),
\end{equation}
such that the $A_\infty$-structure maps are  $H^{\bullet}_G(\mathrm{pt};\Lambda_0)$-multilinear. 

We begin with the singular chain model from which our Morse model is derived from via homological perturbation. Let us set
\[
C^{\bullet}(L(N);\Lambda_0)^{\dagger}=C^{\bullet}(L(N);\Lambda_0)\oplus \left(\bigoplus_{\lambda\in \Crit (\varphi_N)} (\Lambda_0 \cdot \wlambda\oplus \Lambda_0 \cdot \glambda)\right).
\]
Here, the singular chains in $\chi_{-1}(L(N))\subset C^{\bullet}(L(N);\Lambda_0)$ are the unstable chains of the critical points of $f_N$. We denote by $\blambda$ the unstable chain of the critical point $(\bunit_L,\lambda)$. The generators $\wlambda$ and $\glambda$ are of degrees $|\wlambda|=|\blambda|$ and $|\glambda|=|\blambda|-1$. We note that $\blambda$ and $\wlambda$ are of even degrees while $\glambda$ is of odd degree.

By using the idea of the homotopy unit construction in Section~\ref{sec:Morse model}, we construct the following interpolating perturbations, which we use to define a unital $A_{\infty}$-algebra $(C^{\bullet}(L(N);\Lambda_0)^{\dagger}, \tilde{\fm}^{N,\dagger})$ with additional properties.

Let $\vec{a}=(a_1,\ldots,a_{|\vec{a}|})$ be an ordered subset of $\{1,\ldots,k\}$ satisfying $a_1< \ldots < a_{|\vec{a}|}$.
For a $(k-|\vec{a}|)$-tuple of singular chains $\vec{P}=(P_1,\ldots,P_{k-|\vec{a}|})$,
let $\vec{P}^{\dagger}$ be the $k$-tuple obtained by inserting $\blambda$ into the $\vec{a}$-th places of $\vec{P}$ (by abuse of notation, $\blambda$'s inserted can be distinct). We choose a perturbation on $[0,1]^{|\vec{a}|}\times \cM_{k+1}(\beta;L(N);\vec{P}^{\dagger})$ as follows: For a splitting $\vec{a}^1\coprod \vec{a}^2=\vec{a}$ of $\vec{a}$, 
we denote by $\vec{P}'$ the $(k-|\vec{a}^1|)$-tuple given by removing $\blambda$ from the $\vec{a}^1$-th places of $\vec{P}^{\dagger}$. Let $(t_1,\ldots,t_{|\vec{a}|})$ be coordinates on $[0,1]^{|\vec{a}|}$. If $t_i=0$ for all $i\in\vec{a}^1$, we consider the chosen perturbation in Proposition~\ref{prop:extension} on the moduli space obtained by just inserting $\blambda$ into the $\vec{a}^1$-th place. On the other hand, If $t_i=1$ for all $i\in\vec{a}^1$, we take a perturbation pulled back via $\forget_{\vec{a}_1} \colon \cM_{k+1}(\beta;L(N);\vec{P}^{\dagger})\to \cM_{k+1-|\vec{a}^1|}(\beta;L(N);\vec{P}')$. Finally, we take a perturbation $\fs^{\dagger}$ on $[0,1]^{|\vec{a}|}\times \cM_{k+1}(\beta;L(N);\vec{P}^{\dagger})$ transversal to the zero-section  interpolating between them. We note that the unstable submanifold $W^u(f_N,\blambda)$ is the restriction of the fiber bundle $L_N\to BG(N)$ over the unstable submanifold  $W^u(\varphi_N,\lambda)$ in $BG(N)$. Inserting $\blambda$ imposes no constraint in the fiber direction. Therefore it makes sense to pullback perturbations from  $\cM_{k+1-|\vec{a}^1|}(\beta;L(N);\vec{P}')$ to $\cM_{k+1}(\beta;L(N);\vec{P}^{\dagger})$. 

For $\lambda, \lambda' \in \Crit(\varphi_N)$, we denote by $\lambda\cup\lambda'\in \Crit(\varphi_N)$ the critical point representing the cup product of $\lambda$ and $\lambda'$ in $H^{\bullet}(BG(N))$. We choose the perturbations for moduli spaces of the form
\begin{equation} \label{eq:cup_product}
[0,1]\times \cM_{3}(\beta_0; \blambda, \Delta_{(p,\lambda')}), \qquad [0,1]\times \cM_{3}(\beta_0;\Delta_{(p,\lambda')},\blambda),
\end{equation}
to be interpolating between perturbations for $\cM_{3}(\beta_0; \blambda, \Delta_{(p,\lambda')})$ (resp. $\cM_{3}(\beta_0;\Delta_{(p,\lambda')},\blambda)$) and $\Delta_{(p,\lambda\cup \lambda')}$ (resp. $(-1)^{|p|} \Delta_{(p,\lambda\cup \lambda')}$).

The $A_{\infty}$-structure maps $\tilde{\fm}^{N,\dagger}=\{\tilde{\fm}^{N,\dagger}_k\}_{k\ge 0}$ are defined as follows:
\begin{itemize}


\item The restriction of $\tilde{\fm}^{N,\dagger}$ to $C^{\bullet}(L(N);\Lambda_0)^+$ agrees with $\tilde{\fm}^{N,+}$. In particular, this means if $\lambda\in \Crit(\varphi_N)$ is the maximum point, then $\blambda=\bm{e}$ is the homotopy unit, $\wlambda=\bm{e}^+$ is the strict unit, and $\glambda=\bm{f}$ is the homotopy between them.

\item We set $\tilde{\fm}^{N,\dagger}_{1}(\wlambda)=0$ and $\tilde{\fm}^{N,\dagger}_{1,\beta_0}(\glambda)=\wlambda-\blambda$.

\item The $\tilde{\fm}^{N,\dagger}_{k,\beta}$ maps for $(k,\beta) \ne (1,\beta_0)$ with inputs $\glambda$ inserted into $\vec{a}$-th place of $\vec{P}$ are defined by
\begin{equation}
	\label{eq:m++lambda}
	(\ev_0)_*\left(\left([0,1]^{|\vec{a}|}\times \cM_{k+1}(\beta;L(N);\vec{P}^{\dagger})\right)^{\fs^{\dagger}}/\sim\right)
\end{equation}
where $\sim$ is again the equivalence relation collapsing fibers of $\forget_{\vec{a}_1}$. We note that the zero locus of a pullback multisection is a degenerate singular chain which becomes zero in the quotient. 
\end{itemize}

By our construction, $\tilde{\fm}^{N,\dagger}$ has the following additional properties:
\begin{enumerate}
\item $\tilde{\fm}^{N,\dagger}_{k}(\ldots,\wlambda,\ldots)=0$ for $k\ge 3$.
\item We have 
	$\tilde{\fm}^{N,\dagger}_{1}(\glambda)=\wlambda-\blambda+\bm{h}_{\lambda}$, 
where
\begin{equation}
	\label{eq:fh-lambda}
	\bm{h}_{\lambda}=\sum_{\beta\ne \beta_0} \bm{T}^{\omega_X(\beta)} (\ev_0)_* \left(([0,1]\times \cM_2(\beta;L(N);\blambda))^{\fs^{\dagger}}/\sim \right),
\end{equation}
and $\bm{h}_{\lambda} \equiv 0 \mod \Lambda_+\cdot C^{\bullet}(L(N);\Lambda_0)^{\dagger}$.
\item $\tilde{\fm}^{N,\dagger}_{2,\beta}(\wlambda,P)=\tilde{\fm}^{N,\dagger}_{2,\beta}(P,\wlambda)=0$ for $\beta \ne \beta_0$.

\item $\tilde{\fm}^{N,\dagger}_{2,\beta_0}(\wlambda, \Delta_{(p,\lambda')})= \Delta_{(p,\lambda\cup \lambda')}= (-1)^{|p|} \tilde{\fm}^{N,\dagger}_{2,\beta_0} (\Delta_{(p,\lambda')},\wlambda)$.
\end{enumerate}

We note that properties (1), (2) and (3) above are consequences of \eqref{eq:m++lambda} similar to properties of the original homotopy unit, and (4) follows from the choices of perturbations for the moduli spaces in \eqref{eq:cup_product}.

We apply homological perturbation to obtain a Morse model $(CF^{\bullet}(L(N);\Lambda_0)^{\dagger},\fm^{N,\dagger})$ from the singular chain model $(C^{\bullet}(L(N);\Lambda_0)^{\dagger},\tilde{\fm}^{N,\dagger})$ using the strong contraction (Definition \ref{def:strong_contraction}) constructed in Section \ref{sec:Morse model}. For the homological perturbation to preserve properties (1), (3), (4) above, we require the homotopy operator $G^+$ to satisfy 
\begin{equation} \label{eq:G_and_lambda}
G^+(\tilde{\fm}^{N,\dagger}_{2,\beta_0}(\wlambda, P))=  \tilde{\fm}^{N,\dagger}_{2,\beta_0}(\wlambda, G^+(P)), 
\end{equation} 
\begin{equation}  \label{eq:G_and_lambda_2}
G^+(\tilde{\fm}^{N,\dagger}_{2,\beta_0}(P,\wlambda))=  \tilde{\fm}^{N,\dagger}_{2,\beta_0}(G^+(P),\wlambda).
\end{equation} 
\eqref{eq:G_and_lambda}, \eqref{eq:G_and_lambda_2} together with $(G^+)^2=0$ imply that
\begin{equation} \label{eq:perserve_partial_unit}
	G^{\dagger}(\tilde{\fm}^{N,\dagger}_{2,\beta_0}(\wlambda,G^{\dagger}(P))=G^{\dagger}(\tilde{\fm}^{N,\dagger}_{2,\beta_0}(G^{\dagger}(P),\wlambda))=0.
\end{equation} 
\eqref{eq:G_and_lambda}, \eqref{eq:G_and_lambda_2} and \eqref{eq:perserve_partial_unit} ensure that 
\[
\fm^{N,\dagger}_{\Gamma}(\ldots,\wlambda,\ldots)=0,
\]
unless $\Gamma$ is the tree with two inputs and only one interior vertex decorated by the constant disc class. 

We note that the singular chains in both sides of \eqref{eq:G_and_lambda} and \eqref{eq:G_and_lambda_2} are determined by the perturbations chosen for the moduli spaces,
\begin{equation*}
 \cM_{3}(\beta_0; \blambda, P), \quad   \cM_{3}(\beta_0;\blambda,G^+(P)), \quad
 \cM_{3}(\beta_0; P,\blambda), \quad   \cM_{3}(\beta_0;G^+(P),\blambda).
\end{equation*}
The perturbation for a moduli space $\cM_{3}(\beta_0; P,Q)$ is given by composing the input evaluation maps $\ev_1$ and $\ev_2$ with a small diffeomorphism of $L_N$ such that the fiber product with singular chains $P$ and $Q$ are transverse. By choosing the same diffeomorphisms for  $\cM_{3}(\beta_0; \blambda, P)$ and $\cM_{3}(\beta_0;\blambda,G^+(P))$ (resp.  $\cM_{3}(\beta_0; P,\blambda)$ and $\cM_{3}(\beta_0;G^+(P),\blambda)$), the homotopy operator $G^+$ satisfies \eqref{eq:G_and_lambda} and \eqref{eq:G_and_lambda_2}. 

Finally, following the inductive procedure in Section \ref{sec:G Morse model}., we obtain an $A_{\infty}$-algebra $(CF^{\bullet}_G(L;\Lambda_0)^{\dagger},\mathfrak{m}^{G,\dagger})$.
We will abuse notation and identify elements of $CF^{\bullet}_G(L;\Lambda_0)^{\dagger}$ with their image in $C^{\bullet}(L(N);\Lambda_0)^{\dagger}$. 
For $\lambda\in H^{\bullet}_G(\mathrm{pt};\Lambda_0)$, we put 
\[
\blambda=\bunit_L\otimes \lambda, \, \wlambda=\wunit_L\otimes \lambda,  \mbox{ and } \,  \glambda=\gunit_L\otimes \lambda.
\]
In particular, when $\lambda=1$, we have
\[
\bunit=\bunit_L\otimes 1, \, \wunit=\wunit_L\otimes 1,  \mbox{ and } \,  
\gunit=\gunit_L\otimes 1.
\]
By our construction, $(CF^{\bullet}_G(L;\Lambda_0)^{\dagger}, \mathfrak{m}^{G,\dagger})$ has the following properties: 
\begin{enumerate}
\item The restriction of $\mathfrak{m}^{G,\dagger}$ to $CF^{\bullet}_G(L;\Lambda_0)$ coincides with $\mathfrak{m}^{G}$. In particular, $\bunit$ is the homotopy unit, $\wunit$ is the strict unit, and $\gunit$ is the homotopy between them.

\item For $k \ne 2$,
\begin{equation}
	\label{eq:dagger_2}
	\mathfrak{m}^{G,\dagger}_k(\ldots,\wlambda,\ldots)=0.
\end{equation}

\item For $X=x\otimes \lambda'\in CF^{\bullet}_G(L;\Lambda_0)^{\dagger}$, 
let's put
\begin{equation*}
\lambda\cdot X=x \otimes (\lambda\cup\lambda').
\end{equation*}
Then we have
\begin{equation}
\label{eq:dagger_1}
\mathfrak{m}^{G,\dagger}_2(\wlambda,{X})=\lambda \cdot X =(-1)^{|X|}\mathfrak{m}^{G,\dagger}_{2}({X},\wlambda)
\end{equation}

\end{enumerate}
When the minimal Maslov index of $L$ is nonnegative, the elements $\gunit$ and $\glambda$ satisfy
\begin{equation}
\label{eq:coh_unit}
\mathfrak{m}^{G,\dagger}_1(\gunit)=\wunit-(1-h)\bunit,
\end{equation}
for some $h\in \Lambda_+$, and
\begin{equation}
\label{eq:glambda}
\mathfrak{m}^{G,\dagger}_1(\glambda)=\mathfrak{m}^{G,\dagger}_1(\mathfrak{m}^{G,\dagger}_2(\gunit,\wlambda))=\wlambda-(1-h)\blambda,
\end{equation}
where the last inequality follows from the $A_{\infty}$-identity.

The equation $\eqref{eq:coh_unit}$ implies $[\bunit] = [\wunit]/(1-h) \in HF^0_G(L;\Lambda_0)$ (in the weakly unobstructed case so that the equivariant Floer cohomology is well-defined). Since $h\in \Lambda_+$, $\bunit$ is a cohomological unit. This is important, for instance when we consider quasi-isomorphisms of objects in the Fukaya category.

Since the properties~\eqref{eq:dagger_1} and~\eqref{eq:dagger_2} that $\wlambda$ satisfies are similar to those of a strict unit, we call $\wlambda$ a \emph{partial unit} and $\blambda$ a \emph{homotopy partial unit}. We formalize this in the following definition.

\begin{defn}[Partial unit] 
	\label{def:partial_unit}
	Let $(A,\fm)$ be an $A_{\infty}$-algebra over $\Lambda_0$. An element $\lambda^+\in A$ is called a \emph{partial unit} if
	\[
	\fm_k(\ldots, \lambda^+,\ldots)=0
	\]
	for $k\ne 2$ and
	\[
	\fm_2(x,\lambda^+)=x\bigcdot \lambda^+, \quad \fm_2(\lambda^+,x)=\lambda^+\bigcdot x
	\]
	for an associative algebra structure $(A,\bigcdot)$. We say that an element $\lambda \in A$ is a \emph{homotopy partial unit} if there exists an element $\mu\in A$ such that 
	\[
	\fm_1(\mu)=\lambda^+ + c\lambda
	\]
	for some nonzero element $c\in \Lambda_0$.
\end{defn}

The following theorem shows that the $A_{\infty}$-structure maps of  
$(CF^{\bullet}_G(L;\Lambda_0)^{\dagger}, \mathfrak{m}^{G,\dagger})$ are $H^{\bullet}_G(\mathrm{pt};\Lambda_0)$-multilinear.




\begin{theorem} \label{thm:pull-out-lambda}
Assume that $L$ has non-negative minimal Maslov index. Let $X_1,\ldots,X_{k}\in CF^{\bullet}_G(L;\Lambda_0)^{\dagger}$.  Let us fix a basis $\{x_1,x_2,\ldots\}$ and $\{\lambda_1,\lambda_2,\ldots\}$ for $CF^{\bullet}(L)$ and $H^{\bullet}(BG)$, respectively.
For $\ell\in\{1,\ldots,k\}$,  we can write $X_{\ell}=\sum a_{ij} x_i\otimes \lambda_j$, where $a_{ij}\in \Lambda_0$ and $a_{ij}=0$ for all except finitely many $i$ and $j$. Then, we have
\begin{equation*}
\mathfrak{m}^{G,\dagger}_k(X_1,\ldots,X_k)=(-1)^{\ell}\sum 
a_{ij} \lambda_j\cdot \mathfrak{m}^{G,\dagger}_k(X_1,\ldots,X_{\ell-1},x_i\otimes 1,X_{\ell+1},\ldots,X_k).
\end{equation*}	
\end{theorem}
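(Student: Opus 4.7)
By the $\Lambda_0$-multilinearity of $\mathfrak{m}_k^{G,\dagger}$, it suffices to treat the case in which $X_\ell = x \otimes \lambda$ is a single pure tensor at a fixed position $\ell$ (with the remaining inputs $X_j$ arbitrary); the general case follows by summing $a_{ij}$-weighted pieces. The reduced goal is
\[
\mathfrak{m}_k^{G,\dagger}(X_1,\ldots,X_{\ell-1}, x\otimes \lambda, X_{\ell+1},\ldots,X_k) = (-1)^\ell\, \lambda\cdot \mathfrak{m}_k^{G,\dagger}(X_1,\ldots,X_{\ell-1}, x\otimes 1, X_{\ell+1},\ldots,X_k).
\]

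\textbf{Main device.} Using~\eqref{eq:dagger_1}, rewrite $x\otimes \lambda = \mathfrak{m}_2^{G,\dagger}(\wlambda, x\otimes 1)$ and substitute into the left-hand side. Then apply the $A_\infty$-relation to the length-$(k+1)$ sequence
\[
\vec{Y} := (X_1,\ldots,X_{\ell-1},\wlambda,\, x\otimes 1,\, X_{\ell+1},\ldots,X_k).
\]
By~\eqref{eq:dagger_2}, every summand of the $A_\infty$-relation in which $\wlambda$ is an input of some $\mathfrak{m}_j^{G,\dagger}$ with $j\neq 2$ vanishes. The surviving contributions are therefore of two kinds: (i) an inner $\mathfrak{m}_2$ adjacent to $\wlambda$ — either $\mathfrak{m}_2^{G,\dagger}(\wlambda, x\otimes 1)$, which reproduces the left-hand side, or $\mathfrak{m}_2^{G,\dagger}(X_{\ell-1},\wlambda) = X_{\ell-1}\cdot \lambda$, which migrates $\lambda$ one position to the left; and (ii) the outermost $\mathfrak{m}_2$ with $\wlambda$ as one argument, which occurs only when $\wlambda$ is at the boundary of $\vec{Y}$, namely when $\ell = 1$, in which case this term equals $\lambda \cdot \mathfrak{m}_k^{G,\dagger}(x\otimes 1, X_2,\ldots,X_k)$ by~\eqref{eq:dagger_1}.

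\textbf{Execution.} Induct on $\ell$. For $\ell = 1$, the only surviving terms are (i) the reproduction of the left-hand side and (ii) the outermost $\mathfrak{m}_2$, yielding the base identity
\[
\mathfrak{m}_k^{G,\dagger}(x\otimes \lambda, X_2,\ldots,X_k) + \lambda\cdot \mathfrak{m}_k^{G,\dagger}(x\otimes 1, X_2,\ldots,X_k) = 0,
\]
so $\mathfrak{m}_k^{G,\dagger}(x\otimes\lambda,\ldots) = -\lambda\cdot\mathfrak{m}_k^{G,\dagger}(x\otimes 1,\ldots)$, which is the $\ell=1$ case. For $\ell > 1$, no outermost contribution appears, and the two surviving inner terms yield the recursive identity
\[
\mathfrak{m}_k^{G,\dagger}(\ldots,X_{\ell-1},\, x\otimes \lambda,\ldots) = -\,\mathfrak{m}_k^{G,\dagger}(\ldots,\, X_{\ell-1}\cdot \lambda,\, x\otimes 1,\ldots).
\]
Since $X_{\ell-1}\cdot \lambda = \mathfrak{m}_2^{G,\dagger}(\wlambda, X_{\ell-1})$ by~\eqref{eq:dagger_1}, one repeats the same argument at position $\ell - 1$. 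After $\ell - 1$ iterations of the recursive step and one application of the base case, the $\lambda$ is fully extracted, contributing a total sign $(-1)^{\ell-1}\cdot (-1) = (-1)^\ell$, which matches the claim.

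\textbf{Main obstacle.} The principal technical difficulty is the careful bookkeeping of Koszul signs from the $A_\infty$-relation together with those in~\eqref{eq:dagger_1}. Because $G$ is assumed to be a product of unitary groups, every $\lambda \in H_G^\bullet(\mathrm{pt};\Lambda_0)$ has even cohomological degree; this trivializes the Koszul contribution from commuting $\lambda$ past each intermediate $X_j$ and ensures that the only relevant signs are the position-dependent signs built into the $A_\infty$-identity. The $(-1)^\ell$ then accumulates transparently across the $\ell$ steps of the induction.
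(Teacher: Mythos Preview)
Your proposal is correct and follows essentially the same approach as the paper's proof: both rewrite $x\otimes\lambda=\mathfrak{m}_2^{G,\dagger}(\wlambda,x\otimes 1)$ via \eqref{eq:dagger_1}, then use the $A_\infty$-relation together with the vanishing \eqref{eq:dagger_2} to migrate $\wlambda$ leftward one slot at a time until it can be pulled outside as $\mathfrak{m}_2^{G,\dagger}(\wlambda,\mathfrak{m}_k^{G,\dagger}(\ldots))=\lambda\cdot\mathfrak{m}_k^{G,\dagger}(\ldots)$. The paper presents this as a chain of equalities while you organize it as an induction on $\ell$ with an explicit base case, but the underlying computation and sign-tracking are identical.
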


\begin{proof}
From~\eqref{eq:dagger_1}, it follows that 
\[
\mathfrak{m}^{G,\dagger}_k(X_1,\ldots,X_{\ell-1},x_i\otimes \lambda_j,X_{\ell+1},\ldots,X_k)=\mathfrak{m}^{G,\dagger}_k(X_1,\ldots,X_{\ell-1},\mathfrak{m}^{G,\dagger}_2(\wlambda,x_i\otimes 1),X_{\ell+1},\ldots,X_k).
\]
Then, by using~\eqref{eq:dagger_2} and $A_{\infty}$-identity, one obtains
\begin{align*}
&\mathfrak{m}^{G,\dagger}_k(X_1,\ldots,X_{\ell-1},\mathfrak{m}^{G,\dagger}_2(\wlambda,x_i\otimes 1),X_{\ell+1},\ldots,X_k)\\&=(-1)^{\ell-1} \mathfrak{m}^{G,\dagger}_k(\mathfrak{m}^{G,\dagger}_2(\wlambda,X_1)\ldots,X_{\ell-1},x_i\otimes 1,X_{\ell+1},\ldots,X_k)\\&=(-1)^{\ell}\mathfrak{m}^{G,\dagger}_2(\wlambda,\mathfrak{m}^{G,\dagger}_k(X_1,\ldots,X_{\ell-1},x_i\otimes 1,X_{\ell+1},\ldots,X_k))\\&=(-1)^{\ell}\lambda_j\cdot \mathfrak{m}^{G,\dagger}_k(X_1,\ldots,X_{\ell-1},x_i\otimes 1,X_{\ell+1},\ldots,X_k)
\end{align*}
where the last equality follows from~\eqref{eq:dagger_1}.
\end{proof}


\subsection{Disc potentials in equivariant Lagrangian Floer theory}

We now introduce an equivariant disc potential for the equivariant Morse model $(CF^{\bullet}_G(L;\Lambda_0)^{\dagger}, \mathfrak{m}^{G,\dagger})$ \emph{over the graded coefficient ring} $H^{\bullet}_G(\mathrm{pt};\Lambda_0)$. 

For an element $b\in CF^{\bullet}_G(L;\Lambda_0)^{\dagger}$, we define the \emph{equivariant weak Maurer-Cartan equation} by
\begin{equation}
\label{eq:equiv_MC}
\mathfrak{m}^{G,\dagger,b}_0(1)=\mathfrak{m}^{G,\dagger}_0(1)+\mathfrak{m}^{G,\dagger}_1(b)+\mathfrak{m}^{G,\dagger}_2(b,b)+\ldots  \in H^{\bullet}_G(\mathrm{pt};\Lambda_0) \cdot \wunit.
\end{equation}
We denote by $\mathcal{MC}_G(L)$ the space of odd degree solutions of \eqref{eq:equiv_MC}. An element $b\in \mathcal{MC}_G(L)$ is called a \emph{weak Maurer-Cartan element over $H^{\bullet}_G(\mathrm{pt};\Lambda_0)$}. We say that $(CF^{\bullet}_G(L;\Lambda_0)^{\dagger}, \mathfrak{m}^{G,\dagger})$ is \emph{weakly unobstructed} if $\mathcal{MC}_G(L)$ is nonempty.



\begin{defn}
The deformation of $(L,G)$ by $b$ (or simply $(L,G,b)$) is called \emph{unobstructed} (resp. \emph{weakly unobstructed}) if $\mathfrak{m}^{G,\dagger,b}(1)=0$ (resp. $b\in \mathcal{MC}_G(L)$). In particular, if $b=0$, we simply call $(L,G)$ \emph{unobstructed}.   
\end{defn}

Similar to Lemma \ref{lem:unobstr}, we have

\begin{lemma}
\label{lem:G_unobstr}
For $b\in CF^{1}_G(L;\Lambda_{+})$, suppose that
\[
\mathfrak{m}^{G,\dagger,b}_0(1)= \left( W(b) +\sum_{\deg \lambda =2} \phi_{\lambda}(b)\lambda \right) \bunit, 
\]
for some $W(b)$, $\phi_{\lambda}(b)\in \Lambda_0$, and the minimal Maslov index of $L$ is nonnegative. 
Then there exists $b^{\dagger}\in CF^{odd}_G(L;\Lambda_0)^{\dagger}$ such that 
\[
\mathfrak{m}^{G,\dagger,b^{\dagger}}_0(1)=\left( W^{\wT}(b)+\sum_{\deg \lambda =2} \phi_{\lambda}^{\wT}(b) \lambda \right) \wunit,
\] 
for some $W^{\wT}(b)$, $\phi_{\lambda}^{\wT}(b) \in \Lambda_0$.
In particular, $(CF^{\bullet}_G(L;\Lambda_0)^{\dagger}, \mathfrak{m}^{G,\dagger})$ is weakly unobstructed. 
Furthermore, if the minimal Maslov index of $L$ is assumed to be at least two, then $W^{\wT}=W(b)$ and $\phi_{\lambda}^{\wT}(b)=\phi_{\lambda}(b)$.
\end{lemma}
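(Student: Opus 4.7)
The plan is to mimic the proof of Lemma~\ref{lem:unobstr} in the equivariant setting, using the homotopy partial unit $\glambda$ (for each $\deg\lambda = 2$) to absorb the $\blambda$-component of the curvature in parallel with how $\gunit$ absorbs the $\bunit$-component. I propose the ansatz
\[
b^\dagger := b + c_0\,\gunit + \sum_{\deg\lambda = 2} c_\lambda\,\glambda \in CF^{odd}_G(L;\Lambda_+)^\dagger,
\]
with coefficients $c_0, c_\lambda \in \Lambda_+$ to be determined, and aim to choose them so that $\mathfrak{m}^{G,\dagger, b^\dagger}_0(1)$ lies in $H^\bullet_G(\pt;\Lambda_0) \cdot \wunit$.

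The first key step is a degree argument that kills almost all terms in the expansion of $\mathfrak{m}^{G,\dagger, b^\dagger}_0(1) = \sum_k \mathfrak{m}^{G,\dagger}_k((b^\dagger)^{\otimes k})$. By Theorem~\ref{thm:pull-out-lambda}, one may pull out every $\lambda$-factor from each $\glambda$-insertion, reducing the problem to $\mathfrak{m}^{G,\dagger}_k$ with pure $\{b,\gunit\}$-inputs multiplied by a polynomial in the $\lambda$'s. If the total number $\ell$ of $\gunit$-insertions (counting those that came from $\glambda$'s) is at least two, then (since $|\gunit| = -1$, $|b| = 1$ and $\mu(\beta)\ge 0$ under the minimal Maslov hypothesis) the output degree equals $2 - 2\ell - \mu(\beta) \le -2$, which is not represented in $CF^{\bullet}(L;\Lambda_0)\otimes H^{\bullet}_G(\pt;\Lambda_0)$ because $CF^{\bullet}$ is supported in degrees $\ge -1$ and the equivariant coefficients in non-negative even degrees. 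Hence only the all-$b$ contribution $\mathfrak{m}^{G,\dagger, b}_0(1) = W(b)\bunit + \sum_\lambda \phi_\lambda(b)\blambda$ (by hypothesis) and the single $g$-insertion contributions survive.

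For the single-insertion terms, the same degree count forces only Maslov-$0$ discs to contribute. Using~\eqref{eq:coh_unit} and the analogous identity~\eqref{eq:glambda} (whose $b$-deformed version follows from Theorem~\ref{thm:pull-out-lambda} applied to $\glambda = \gunit \otimes \lambda$, combined with the $A_\infty$-relation and the partial-unit property~\eqref{eq:dagger_1}), we obtain with the same series $h(b)\in\Lambda_+$
\[
\mathfrak{m}^{G,\dagger, b}_1(\gunit) = \wunit - (1 - h(b))\bunit, \qquad \mathfrak{m}^{G,\dagger, b}_1(\glambda) = \wlambda - (1 - h(b))\blambda.
\]
Assembling the pieces gives
\[
\mathfrak{m}^{G,\dagger, b^\dagger}_0(1) = \bigl(W(b) - (1-h(b))c_0\bigr)\bunit + c_0\wunit + \sum_\lambda\Bigl(\bigl(\phi_\lambda(b) - (1-h(b))c_\lambda\bigr)\blambda + c_\lambda\wlambda\Bigr).
\]
Killing the $\bunit$ and $\blambda$ coefficients is a linear problem: since $h(b)\in\Lambda_+$, the element $1-h(b)$ is a unit in $\Lambda_0$, so I take $c_0 := W(b)/(1 - h(b))$ and $c_\lambda := \phi_\lambda(b)/(1 - h(b))$, and define $W^{\wT}(b) := c_0$, $\phi_\lambda^{\wT}(b) := c_\lambda$; this produces the claimed form. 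When the minimal Maslov index is at least two, no non-constant Maslov-$0$ discs exist, so $h(b) = 0$, yielding $W^{\wT} = W$ and $\phi_\lambda^{\wT} = \phi_\lambda$. The main obstacle is the second identity displayed above: establishing it rigorously requires unwinding the derivation of~\eqref{eq:glambda} in the $b$-deformed setting, carefully tracking Koszul signs from Theorem~\ref{thm:pull-out-lambda} and confirming that the higher-Maslov corrections cancel so as to produce the \emph{same} series $h(b)$ as appears in the $\gunit$ identity.
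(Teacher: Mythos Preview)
Your approach is essentially the same as the paper's: the same ansatz $b^\dagger = b + c_0\,\gunit + \sum_\lambda c_\lambda\,\glambda$, the same degree argument killing all terms with two or more $g$-insertions, and the same linear algebra at the end to solve for $c_0,c_\lambda$. The one substantive difference is that the paper does \emph{not} try to show that the $b$-deformed identity for $\glambda$ produces the \emph{same} series $h(b)$ as for $\gunit$. Instead, after pulling out $\lambda$ via Theorem~\ref{thm:pull-out-lambda}, the paper observes only that the correction is \emph{some} element $h_\lambda(b)\in\Lambda_+$ (the Koszul sign $(-1)^\ell$ in Theorem~\ref{thm:pull-out-lambda} depends on the position of the insertion, so the sum over positions need not reproduce $h(b)$ exactly), and then takes $c_\lambda = \phi_\lambda(b)/(1-h_\lambda(b))$. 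This completely sidesteps the obstacle you flag at the end: you do not need $h_\lambda(b)=h(b)$, only $h_\lambda(b)\in\Lambda_+$ so that $1-h_\lambda(b)$ is a unit. With that relaxation your argument goes through verbatim and matches the paper's. The paper organizes the bookkeeping via an auxiliary splitting $b_1,b_2,b_3$ rather than your direct ``pull out all $\lambda$'s'' reduction, but that is cosmetic.
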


\begin{proof}
	
Let us tentatively set
\[
b^{\dagger}=b+s(b)\left(W(b)+\sum_{\deg \lambda=2} \phi_{\lambda}(b)\lambda \right)\cdot \gunit,
\]
for some $s(b)\in \Lambda_0$ to be determined. We have 
\begin{align*}
\mathfrak{m}^{G,\dagger,b^{\dagger}}_0(1) &=\sum_{k\ge 0} \mathfrak{m}^{G,\dagger}_k(b,\ldots,b)+\sum_{k\ge 1} \mathfrak{m}^{G,\dagger}_k(b,\ldots,b,\Delta b,b,\ldots,b)\\
&= \mathfrak{m}^{G,\dagger,b}_0(1)+\mathfrak{m}^{G,\dagger,b}_1(\Delta b),
\end{align*}
where $\Delta b=b^{\dagger}-b$. The reason for the first equality is the following: $\Delta b$ is a multiple of $\gunit$, and the latter is in degree $-1$. By the dimension formula \eqref{eq:dim_formula}, the terms with more than one $\gunit$ as inputs vanish since the outputs have degrees at most $(-2)$.  

Next, using the assumption that the minimal Maslov index of $L$ is nonnegative, we have
\[
\mathfrak{m}^{G,\dagger,b}_1(\gunit)=\wunit - (1-h(b))\cdot \bunit,
\]
for some $h(b)\in \Lambda_+$, by the same argument as in the proof of Lemma \ref{lem:unobstr}. This computes $\mathfrak{m}^{G,\dagger,b}_1(\Delta b)$ by $H^{\bullet}_G(\pt,\Lambda_0)$-linearity of the $A_{\infty}$-maps proved in Theorem \ref{thm:pull-out-lambda}. Combining with the first assumption of the Lemma, we have
\begin{align*}
\mathfrak{m}^{G,\dagger,b^{\dagger}}_0(1)=  \left( W(b) +\sum_{\deg \lambda =2} \phi_{\lambda}(b)\lambda \right) \bunit + s(b)\left(W(b)+\sum_{\deg \lambda=2} \phi_{\lambda}(b)\lambda \right) \cdot (\wunit - (1-h(b)) \bunit). 
\end{align*}

By setting $s=\cfrac{1}{1-h(b)}$, we get 
\begin{equation*}
\mathfrak{m}^{G,\dagger,b^{\dagger}}_0(1)= \left( \cfrac{W(b)}{1-h(b)}+\sum_{\deg \lambda =2} \cfrac{\phi_{\lambda}(b)}{1-h(b)}\lambda\right) \wunit =: \left(W^{\wT}(b)+\sum_{\deg \lambda =2} \phi_{\lambda}^{\wT}(b) \lambda \right) \wunit.
\end{equation*}
Furthermore, if the minimal Maslov index of $L$ is at least two, then $h(b)=0$, and therefore $W^{\wT}(b)=W(b)$, $\phi_{\lambda}^{\wT}(b)=\phi_{\lambda}(b)$.
\end{proof}	

\begin{corollary}
\label{cor:L_L_G}
In the setting of Lemma \ref{lem:G_unobstr}, if $b\in C^1(f;\Lambda_0)$ and $\fm_0^b(1)\in \Lambda_0 \cdot \bunit$, then the equivariant Morse model $(CF^{\bullet}_G(L;\Lambda_0)^{\dagger}, \mathfrak{m}^{G,\dagger})$ is weakly unobstructed.
\end{corollary}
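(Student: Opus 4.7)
The plan is to invoke Lemma~\ref{lem:G_unobstr} with input $b\otimes 1\in CF^1_G(L;\Lambda_+)^\dagger$; the only thing to verify is that the equivariant curvature $\fm^{G,\dagger,b\otimes 1}_0(1)$ takes the form $W(b)\bunit + \sum_{|\lambda|=2} \phi_\lambda(b)\blambda$ with $W(b),\phi_\lambda(b)\in\Lambda_+$, as required by that lemma.

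First I would decompose the equivariant curvature via the tensor identification $CF^\bullet_G(L;\Lambda_0)^\dagger\cong CF^\bullet(L;\Lambda_0)\otimes_{\Lambda_0} H^\bullet_G(\mathrm{pt};\Lambda_0)$. The component along $1\in H^0_G(\mathrm{pt};\Lambda_0)$ is controlled by the compatibility between the finite-dimensional approximations of the equivariant Morse model and the non-equivariant one: iterating Proposition~\ref{prop:extension} (extended in the obvious way to the $\dagger$-enlarged models of Section~\ref{sec:partial_units}) shows that the projection of $\fm^{G,\dagger,b\otimes 1}_0(1)$ onto $CF^\bullet(L;\Lambda_0)\otimes 1$ recovers the non-equivariant curvature $\fm^b_0(1)=W(b)\bunit$, which is precisely the given hypothesis.

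For the remaining components, lying in $CF^\bullet(L;\Lambda_0)\otimes H^{>0}_G(\mathrm{pt};\Lambda_0)$, the assumption that $G$ is a product of unitary groups puts $H^\bullet_G(\mathrm{pt};\Lambda_0)$ in even non-negative degrees. Combined with the non-negative minimal Maslov index of $L$, the virtual dimension formula
\[
\dim \cM_\Gamma = \mu(\Gamma) - 2 + |q| + |\lambda|
\]
for a pearly tree with $k$ inputs of degree $1$ and output at $(q,\lambda)$ forces any non-trivial count to satisfy $\mu(\Gamma)=0$, $|\lambda|=2$, and $|q|=0$. Since $f$ has a unique maximum $\bunit_L$, such contributions collapse to $\Lambda_+$-multiples of $\blambda$, producing the desired $\sum_{|\lambda|=2}\phi_\lambda(b)\blambda$. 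The $\Lambda_+$-positivity of $\phi_\lambda(b)$ and $W(b)$ follows from $b\in\Lambda_+$ for $k\ge 1$, while for $k=0$ the term $\fm^{G,\dagger}_0(1)$ excludes constant-disc contributions by pearly tree stability. Lemma~\ref{lem:G_unobstr} then provides $b^\dagger\in\mathcal{MC}_G(L)$, so $(CF^\bullet_G(L;\Lambda_0)^\dagger,\fm^{G,\dagger})$ is weakly unobstructed.

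The hardest step will be making the degree/dimension argument in the third paragraph fully rigorous, since the construction works at the chain level and does not impose a strict $\Z$-grading (cf.\ the grading remark in Section~\ref{sec:simplicial}); one must interpret the dimension count modulo the Maslov class and verify that higher-Maslov contributions to outputs outside $\Lambda_+\cdot\blambda$ are ruled out geometrically rather than merely by a naive degree comparison. A secondary technicality is to ensure that the compatibility of Proposition~\ref{prop:extension} propagates to the $\dagger$-enlargements with $\bunit,\wunit,\gunit$ correctly matched to their $\blambda,\wlambda,\glambda$ analogues across approximations, which is implicit in the construction of Section~\ref{sec:partial_units} but should be spelled out.
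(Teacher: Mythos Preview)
Your proposal is correct and follows exactly the paper's approach: verify that $\fm^{G,\dagger,b}_0(1)$ has the form $W(b)\bunit+\sum_{\deg\lambda=2}\phi_\lambda(b)\blambda$ and then invoke Lemma~\ref{lem:G_unobstr}. The paper's own proof is a one-line assertion of this form without further justification, whereas you spell out the two ingredients (Proposition~\ref{prop:extension} for the $\otimes 1$ component, degree counting for the $H^{>0}_G$ components)---your worries in the final paragraph about grading are unnecessary here, since outputs of $\fm^G_k$ with inputs in $C^\bullet(f;\Lambda_0)$ are honest critical points of $f_N$ carrying integer degrees, so the dimension argument is rigorous as stated.
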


\begin{proof}
If $b\in C^1(f;\Lambda_0)$ and $\fm_0^b(1)\in \Lambda_0 \cdot \bunit$, then $\mathfrak{m}^{G,\dagger,b}_0(1)$ is of the form 
\[
\mathfrak{m}^{G,\dagger,b}_0(1)=\left( W(b) +\sum_{\deg \lambda =2} \phi_{\lambda}(b)\lambda\right) \bunit.
\]
\end{proof}


We remark that even if the minimal Maslov index of $L$ is $2$ and $(L,b)$ is unobstructed, one can only expect $(L,G,b)$ to be weakly unobstructed in general. This is due to the possibility of the constant disc class  contributing to degree $2$ equivariant parameters. 

Now, let $\mathcal{MC}^{\blT}_G(L)$ be the space of elements $b\in CF^{1}_G(L;\Lambda_{+})$ which satisfy
\[
\mathfrak{m}^{G,\dagger,b}_0(1)=W(b) \bunit+\sum_{\deg \lambda =2} \phi_{\lambda}(b)\blambda
\]
for some $W(b), \phi_{\lambda}(b) \in \Lambda_0$. By the Lemma \ref{lem:G_unobstr}, for each such $b$, there exists $b^{\dagger}\in CF^{odd}_G(L;\Lambda_0)^{\dagger}$ such that 
\[
\mathfrak{m}^{G,\dagger,b^{\dagger}}_0(1)=\left(W^{\wT}(b)+\sum_{\deg \lambda =2}  \phi_{\lambda}^{\wT}(b) \lambda \right)\cdot \wunit,
\] 
Then, we may view $W^L_G(b):=W^{\wT}(b)+\sum_{\deg \lambda =2}  \phi_{\lambda}^{\wT}(b) \lambda$ as a map from $\mathcal{MC}^{\blT}_G(L)$ to $H^{\bullet}_G(\mathrm{pt};\Lambda_0)$.

\begin{defn} \label{def:G_disc_potential}
	We will call $W^L_G:=W^{\wT} \colon \mathcal{MC}^{\blT}_G(L) \to H^{\bullet}_G(\mathrm{pt};\Lambda_0)$ the \emph{equivariant disc potential} of $(L,G)$. 
\end{defn}


\section{$T$-equivariant disc potentials of toric manifolds} \label{sec:toric}
	In this section, we study the equivariant Morse model in the case of a torus $T\cong (\bS^1)^{\ell}$ acting on a closed, connected, relative spin Lagrangian submanifold $L\cong T\times P$ of product type of a symplectic $T$-manifold $X$, such that $T$ acts freely on the first factor of $L$ and trivially on the second factor. In the case $L\cong T$ is a regular moment map fiber of a semi-projective, semi-Fano toric manifold, we recover the equivariant toric superpotential $W_{\lambda}$ as the $T$-equivariant disc potential of $L$.

\subsection{Morse theory on the approximation spaces}\label{sec:Morsethyappr}

  For explicit computations of the equivariant disc potentials, we begin by fixing our choice of admissible Morse-Smale pairs $\{(f_N,\mathscr{V}_N)\}_{N\in \bN}$.

		Our choices of the universal bundle $ET$ and the classifying space $BT$ are $ET=(\bS^{\infty})^{\ell}$ and $BT=(\C\bP^{\infty})^{\ell}$. We also have the finite dimensional approximations $ET(N)= (\bS^{2N+1})^{\ell}$ and $BT(N)= (\C\bP^{N})^{\ell}$. Let $T(N)$ be the finite dimensional approximation $T\times_T (\bS^{2N+1})^{\ell}$ of $T_T$.
		Since the $T$ acts trivially on $P$, we have $L_{T}=T_{T}\times (P\times (\C\bP^{\infty})^{\ell})$ and $L(N)=T(N)\times (P\times (\C\bP^{N})^{\ell})$. 

	For $N\ge 1$, let $[z_{i,0},\ldots,z_{i,N}]$ be the homogeneous coordinates on the $i$-th component of $BT(N)=(\C\bP^N)^{\ell}$. Let $\pi_N \colon L(N) \to (\C\bP^N)^{\ell}$ be the projection map. Let $\{U_{j_1\ldots j_{\ell}}\}$ be the open cover of $(\C\bP^N)^{\ell}$ defined by
	$$
	U_{j_1\ldots j_{\ell}}=\left\{[z_{i,0},\ldots,z_{i,N}]_{i=1,\ldots,{\ell}}\in (\C\bP^N)^{\ell} \mid z_{i,j_i}\ne 0\right\}.
	$$
	Set $\widetilde{U}_{j_1\ldots j_{\ell}}=\pi_N^{-1}(U_{j_1\ldots j_{\ell}})\cong L\times (\C^N)^{\ell}$. We will be working with the atlas $\{\widetilde{U}_{j_1\ldots j_{\ell}}\}$ for $L(N)$ with local coordinates
	\begin{equation*}
	\left(\left(\theta_1^{(j_1\ldots j_{\ell})},\ldots,\theta_{\ell}^{(j_1\ldots j_{\ell})}, \vec{p}^{(j_1\ldots j_{\ell})} \right),\left(z_{i,0}^{(j_1\ldots j_{\ell})},\ldots,\widehat{z_{i,j_i}^{(j_1\ldots j_{\ell})}=1},\ldots,z_{i,N}^{(j_1\ldots j_{\ell})}\right)_{i=1,\ldots,{\ell}}\right)
	\end{equation*}
	on $\widetilde{U}_{j_1\ldots j_{\ell}}$, where $\theta_i^{(j_1\ldots j_{\ell})}\in [0,2\pi)$ are angular coordinates on $(\bS^1)^{\ell}$, $\vec{p}^{(j_1\ldots j_{\ell})}$ is any coordinate system on $P$, and the term under $``\widehat{\quad}"$ is omitted. Set $\vartheta_{i,j}^{(j_1\ldots j_{\ell})}=\mathrm{Arg}\left(z^{(j_1\ldots j_{\ell})}_{i,j}\right)$. The transition map $\widetilde{U}_{j_1\ldots j_{\ell}}$ to  $\widetilde{U}_{j'_1\ldots j'_{\ell}}$ is given by
	\begin{multline*}
	\left(\left(\theta_1^{(j_1\ldots j_{\ell})},\ldots,\theta_{\ell}^{(j_1\ldots j_{\ell})},\vec{p}^{(j_1\ldots j_{\ell})}\right),\left(z_{i,0}^{(j_1\ldots j_{\ell})},\ldots,\widehat{z_{i,j_i}^{(j_1\ldots j_{\ell})}=1},\ldots,z_{i,N}^{(j_1\ldots j_{\ell})}\right)\right)\mapsto\\
	\left(\left(\theta_1^{(j_1\ldots j_{\ell})}+\vartheta_{1,j'_1}^{(j_1\ldots j_{\ell})},\ldots,\theta_{\ell}^{(j_1\ldots j_{\ell})}+\vartheta_{{\ell},j'_{\ell}}^{(j_1\ldots j_{\ell})},\vec{p}^{(j_1\ldots j_{\ell})}\right),\left(\cfrac{z_{i,0}^{(j_1\ldots j_{\ell})}}{z_{i,j'_i}^{(j_N\ldots j_{\ell})}},\ldots,\widehat{\cfrac{z_{i,j'_i}^{(j_1\ldots j_{\ell})}}{z_{i,j'_i}^{(j_1\ldots j_d)}}=1}, \ldots,\cfrac{z_{i,j_i}^{(j_1\ldots j_{\ell})}}{z_{i,j'_i}^{(j_1\ldots j_{\ell})}}\ldots,\cfrac{z_{i,N}^{(j_1\ldots j_{\ell})}}{z_{i,j'_i}^{(j_1\ldots j_{\ell})}}\right)\right).
	\end{multline*}
		
	We fix the inclusion $(\C\bP^N)^{\ell}\into (\C\bP^{N+1})^{\ell}$ to be
	\begin{equation}
	\label{inclusion}
	\left([z_{i,0},\ldots,z_{i,N}]\right)\mapsto \left([z_{i,0},\ldots,z_{i,N},0]\right),
	\end{equation}
	which in turn, fixes the inclusions $X(N)\into X(N+1)$ and $L(N)\into L(N+1)$.
	
	For $U(1)=\bS^1$, the perfect Morse function \eqref{eq:perfect_Gr} on the infinite complex Grassmanian specializes to 
	\[
	f_{\C\bP^{\infty}} \colon \C\bP^{\infty}\to \R, \quad f_{\C\bP^{\infty}}([z_{0},z_{1},\ldots]) = \cfrac{-\sum_{k=1}^{\infty} k|z_{k}|^2} {\sum_{k=0}^{\infty} |z_{k}|^2},
	\]
	from which we obtain a perfect Morse function on $BT$
	\begin{equation} \label{eq:CP_infty}
	\varphi \colon (\C\bP^{\infty})^{\ell} \to \R, \quad \varphi([z_{i,0},z_{i,1},\ldots])=-\sum_{i=1}^{\ell}\cfrac{\sum_{k=1}^{\infty} k|z_{i,k}|^2} {\sum_{k=0}^{\infty} |z_{i,k}|^2}.
	\end{equation}
	The critical points of $\varphi$ are of the form
	\[
	\Crit(\varphi)= \left\{ ([0,\ldots,0,z_{i,j_i}\ne 0,0,\ldots]_{i=1,\ldots,\ell}) \mid (j_1,\ldots, j_{\ell})\in \Z_{\ge 0}^{\ell} \right\}.
	\]
	with degrees given by $\sum_{i=1}^{\ell} 2j_i$. We denote the degree $2$ critical points with $j_i=1$, and $j_k=0$ for $k\ne i$ by $\lambda_i$.  We also denote the critical point where $\varphi$ attains the maximum by $\bm{1}_{BT}$, i.e.
	\[
	\bm{1}_{BT}=([z_{i,0}\ne 0,0,0,\ldots]_{i=1,\ldots,\ell}).
	\]
	
	We set $\varphi_N=\varphi|_{BT(N)}$. Note that $\varphi_N$ is a perfect Morse function on $(\C\bP^{N})^{\ell}$. We will abuse notation and denote the degree two critical points and the maximum of $\varphi_N$ again by $\bm{1}_{BT}$ and $\lambda_i$, respectively. Note that we have
	\[
	H^{\bullet}(BT(N);\Z)=H^{\bullet}((\C\bP^{N})^{\ell};\Z)=\Z[\lambda_1,\ldots,\lambda_{\ell}]/\langle \lambda_1^{N+1},\ldots,\lambda_{\ell}^{N+1}\rangle,
	\]
	and
	\[
	H^{\bullet}_{T}(\pt;\Z)=\Z[\lambda_1,\ldots,\lambda_{\ell}].
	\]
	
	On the other hand, let $f_P$ be a Morse function on $P$ with a unique maximum $\bm{1}_P$, and let  $f_{T}: (\bS^1)^{\ell}\to\R$ be the perfect Morse function 
	\[
	f_{T}(\theta_1,\ldots,\theta_{\ell})=\sum_{i=1}^{\ell}\cos(\theta_i).
	\]
    The critical points of $f_{T}$ are of the form
    \[	
	\Crit\left(f_{T}\right)=\left\{(\theta_1,\ldots,\theta_{\ell})\in L \mid \theta_i=0 \text{ or } \theta_i=\pi\right\}.
    \]   
    Let $f=(f_{T},f_P)$ be the Morse function on $L$. We denote by $\bm{1}_{L}$ the critical point where $f$ attains the maximum, 
    {i.e.} $\bm{1}_{L}=((0,\ldots,0),\bm{1}_P)$. We also denote by $X_1,\ldots,X_{\ell}$ the degree $1$ critical points, and $X_i\wedge X_j$, $1\le i<j\le \ell$ the degree $2$ critical points of $f$ of the following forms
    \begin{equation*} \label{eq:X_i}
    X_i=((0,\ldots,\theta_i=\pi,\ldots,0),\bm{1}_P), 
    \end{equation*}
    and
    \[
    X_i\wedge X_j=((0,\ldots,\theta_i=\pi,\ldots,\theta_j=\pi,\ldots,0),\bm{1}_P).
    \]

	Let $\{D_{j}\}$ be the open cover of $\C\bP^{\infty}$ given by
	\begin{equation*}
	\label{polydisc}
	D_{j}=\left\{\left(z_{0}^{(j)},\ldots,z_{N}^{(j)}\right)\in U_{j} \mid \left|z_{k}^{(j)}\right|<2 \text{ for all } k\right\}.
	\end{equation*}
	Let $\{\rho^{(j)}\}$ be a smooth partition of unity subordinate to $\{D_{j}\}$, and denote by $\rho^{(j)}_{i}$ the post-composition of $\rho^{(j)}$ with the projection from $L_{T}$ to the $i$-th component of $(\C\bP^{\infty})^{\ell}$. We define $\rho^{(j_1\ldots j_{\ell})} \colon L_{T}\to \R$ by 
	\[
	\rho^{(j_1\ldots j_{\ell})}=\prod_{i=1}^{\ell} \rho^{(j_i)}_{i}.
	\]
	Let $\rho^{(j_1\ldots j_{\ell})}_N=\rho^{(j_1\ldots j_{\ell})}|_{BT(N)}$, then $\{\rho^{(j_1\ldots j_{\ell})}_N\}$ is a smooth partition of unity on $L(N)$.

	Let $\phi^{(j_1\ldots j_{\ell})}_N \colon \widetilde{U}_{j_1\ldots j_{\ell}}\to\R$ be the fiber-wise Morse function
	\begin{equation*}
	\phi^{(j_1\ldots j_{\ell})}_N\left(\left(\theta_1^{(j_1\ldots j_{\ell})},\ldots,\theta^{(j_1\ldots j_{\ell})}_{\ell},\vec{p}^{(j_1\ldots j_{\ell})}\right),\left(z_{i,0}^{(j_1\ldots j_{\ell})},\ldots,z_{i,N}^{(j_1\ldots j_{\ell})}\right)\right)=f\left(\theta_1^{(j_1\ldots j_{\ell})},\ldots,\theta^{(j_1\ldots j_{\ell})}_{\ell},\vec{p}^{(j_1\ldots j_{\ell})}\right),
	\end{equation*}
	and set
	\begin{equation*}
	\phi_N=\sum_{(j_1\ldots j_{\ell})} \rho^{(j_1\ldots j_{\ell})}_N  \phi^{(j_1\ldots j_{\ell})}_N. 
	\end{equation*}
	Notice that $\phi_N=\phi^{(j_1\ldots j_{\ell})}_N$ for some $(j_1\ldots j_{\ell})$ in a neighborhood of $\pi_N^{-1}(\lambda)$ for  $\lambda\in\Crit(\varphi_N)$. Then, the function $f_N \colon L(N)\to\R$ defined by
	\begin{equation*}
	f_N:=\epsilon \phi_N+\pi_N^*\varphi_N, \quad \epsilon>0, 
	\end{equation*}
	is a Morse function for sufficiently small $\epsilon$. $f_N$ has the critical point set
	\begin{equation*}
	\mathrm{Crit}(f_N)=\left\{(p,\lambda)|p\in\mathrm{Crit}\left(\phi_{N}|_{\pi_N^{-1}(\lambda)}\right), 
	\lambda\in\mathrm{Crit}(\varphi_N)\right\}=\mathrm{Crit}(f)\times\mathrm{Crit}(\varphi_N).
	\end{equation*}
	The degree of $(p,\lambda)\in\mathrm{Crit}(f_N)$ is given by $|(p,\lambda)|=|p|+|\lambda|$,
	where $|p|$ and $|\lambda|$ are the degrees of $p$ and $\lambda$ as critical points of $f$ and $\varphi_N$, respectively.
	
	Let $V_P$ be a negative pseudo-gradient for $f_P$. Let $V_N$ be the vector field given by
	\begin{align*}
	V_N|_{\widetilde{U}_{j_1\ldots j_{\ell}}}=&4\sum_{i=1}^{\ell} \sum_{j\ne j_i} (j-j_i)\left|z_{i,j}^{(j_1\ldots j_{\ell})}\right|^2\cfrac{\partial}{\partial \left|z_{i,j}^{(j_1\ldots j_{\ell})}\right|^2}\\
	&+\sum_{i=1}^{\ell} \left(\rho^{(j_i)}_{N,i}\sin\left(\theta_i^{(j_1\ldots j_{\ell})}\right)+\sum_{j\ne j_i} \rho^{(j)}_{N,i} \sin\left(\theta_i^{(j_1\ldots j_{\ell})}+\vartheta_{i,j}^{(j_1\ldots j_{\ell})}\right)\right) \cfrac{\partial}{\partial \theta_i^{(j_1\ldots j_{\ell})}}.
	\end{align*}
	Here $\rho^{(j)}_{N,i}=\rho^{(j)}_{i}|_{BT(N)}$. Then
	\[
	\mathscr{V}_N=V_N+V_P
	\]
	is a negative pseudo-gradient for $f_N$. For $N=0$, we set $f_0=f$, and $\mathscr{V}_0=\sum_{i=1}^{\ell} \sin(\theta_i) \cfrac{\partial}{\partial \theta_i}+V_P$.
	
	\begin{prop} 
	For generic choices of $V_P$, the pair $(f_N,\mathscr{V}_N)$ is a Morse-Smale pair. Moreover, the sequence $\{(f_N,\mathscr{V}_N) \}_{N\in \bN}$ is admissible in the sense of Definition \ref{def:morse-smale}.
	\end{prop}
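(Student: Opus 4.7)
The plan is to verify, in order, that $V_N$ is a negative pseudo-gradient for $\pi_N^*\varphi_N$, that $\Crit(f_N)=\Crit(f)\times\Crit(\varphi_N)$ with degrees adding, that $(f_N,\mathscr{V}_N)$ is Morse--Smale for generic $V_P$, and that the sequence satisfies conditions (1)--(3) of Definition~\ref{def:morse-smale}. The arguments are driven by the explicit chart form of $V_N$ together with the fact that its $|z|^2$-part projects to the standard negative pseudo-gradient of the perfect Morse function $\varphi_N$.

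First I would carry out the local index computation in the chart $\widetilde{U}_{j_1\ldots j_\ell}$: the Hessian of $\varphi_N$ in the moduli $|z_{i,j}|^2$ is proportional to $-(j-j_i)$, while the corresponding coefficient in $V_N$ is $4(j-j_i)|z_{i,j}|^2$; this verifies that $V_N$ is a negative pseudo-gradient for $\pi_N^*\varphi_N$ and shows $\mathrm{ind}(\lambda)=2\sum_i(N-j_i)$ at $\lambda=(j_1,\ldots,j_\ell)\in\Crit(\varphi_N)$, giving $|\lambda|=2\sum_i j_i$. Combined with the standard Morse data for $f=(f_T,f_P)$, this yields $\Crit(f_N)=\Crit(f)\times\Crit(\varphi_N)$ with degrees adding. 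Condition (2) then follows at once: if $p\in\Crit(f_N)\setminus\Crit(f_{N-1})$ then some $j_i=N$, so $|p|\ge 2N$ and $N(\ell):=\lceil(\ell+1)/2\rceil$ works. Condition (3) is immediate because the maxima of $\varphi_N$ and $\varphi_{N+1}$ both lie at $[1:0:\cdots:0]$, which is identified by~\eqref{inclusion}.

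The heart of condition (1) is the identity $\mathscr{V}_{N+1}|_{L(N)}=\mathscr{V}_N$, which I would verify chart by chart: along the locus $z_{i,N+1}=0$ cutting out $L(N)\subset L(N+1)$, the $j=N+1$ term $4(N+1-j_i)|z_{i,N+1}|^2\,\partial/\partial|z_{i,N+1}|^2$ vanishes in both its coefficient and its tangential component; the bump $\rho^{(N+1)}_i$ is supported where $z_{i,N+1}\neq 0$ and hence restricts to zero on $L(N)$; and the remaining $\rho^{(j)}_{N+1,i}$ with $j\le N$ agree with $\rho^{(j)}_{N,i}$. It follows that the flow $\Phi^{N+1}_t$ preserves $L(N)$ in both time directions, which gives (i) and (ii). For (iii), in any chart with $j_i\le N$ the ODE $\tfrac{d}{dt}|z_{i,N+1}|^2=4(N+1-j_i)|z_{i,N+1}|^2$ integrates to exponential growth, so a nonzero initial value never returns to zero under forward flow; in any chart with $j_i=N+1$ the coordinate $z_{i,N+1}^{(j_1\ldots j_\ell)}\equiv 1$ is already nonzero.

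The remaining point, and the main technical step, is the Smale transversality of $(f_N,\mathscr{V}_N)$. Near each $\lambda\in\Crit(\varphi_N)$ the partition-of-unity bump $\rho^{(j_1\ldots j_\ell)}_N$ equals $1$, so $f_N=\pi_N^*\varphi_N+\epsilon f$ in the local product coordinates and the local stable/unstable manifolds split accordingly. Globally, $\mathscr{V}_N$ projects to the negative pseudo-gradient of the perfect Morse function $\varphi_N$, whose stable/unstable manifolds are linear coordinate subspaces intersecting transversely. The Smale transversality of $(f_N,\mathscr{V}_N)$ then reduces, by a standard parametric argument along the finitely many base flow lines, to Smale transversality of the fiber flow; the latter splits as the height flow on $(\bS^1)^\ell$ (Morse--Smale by the product structure of $f_T$) with the $V_P$-flow on $P$. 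The Kupka--Smale theorem produces a generic $V_P$ achieving Smale transversality on $P$, and the compatibility $\mathscr{V}_{N+1}|_{L(N)}=\mathscr{V}_N$ already established makes the genericity conditions for different $N$ nested, so a single generic $V_P$ suffices for all $N$ simultaneously.
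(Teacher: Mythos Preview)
The paper states this proposition without proof, so there is no paper argument to compare against. Your sketch supplies exactly the verification the paper omits, and the overall strategy---checking the restriction identity $\mathscr{V}_{N+1}|_{L(N)}=\mathscr{V}_N$ from the explicit chart formulas, reading off conditions (2) and (3) from the degree count and the inclusion~\eqref{inclusion}, and deducing (1)(i)--(iii) from the restriction identity together with the exponential growth of $|z_{i,N+1}|^2$ under the base flow---is the natural one and is correct.

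Two small points worth tightening. First, what the proposition actually requires (and what the paper asserts just above it) is that $\mathscr{V}_N$ be a negative pseudo-gradient for $f_N=\epsilon\phi_N+\pi_N^*\varphi_N$, not merely that $V_N$ be one for $\pi_N^*\varphi_N$; you should check $df_N(\mathscr{V}_N)\le 0$ directly, which amounts to observing that the $|z|^2$-part of $V_N$ is the standard pseudo-gradient for $\varphi_N$ while the $\theta$-part is a convex combination of fiberwise pseudo-gradients for $f_T$ (and $V_P$ handles $f_P$). Second, in your argument for (1)(iii) the flow may leave the chart $\widetilde U_{j_1\ldots j_\ell}$, so the ODE $\tfrac{d}{dt}|z_{i,N+1}|^2=4(N+1-j_i)|z_{i,N+1}|^2$ is only valid locally; a cleaner formulation is that the condition $z_{i,N+1}=0$ is chart-independent and cuts out an invariant submanifold for the projected flow on $(\C\bP^{N+1})^\ell$, and the forward limit of any point with $z_{i,N+1}\neq 0$ is the critical point $e_{N+1}$ in the $i$-th factor, which lies outside $\C\bP^N$. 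Your Morse--Smale argument via family Morse theory over the base flow lines is the standard one (this is the mechanism behind Proposition~\ref{prop:admissible} as well), and the nesting of genericity conditions across $N$ is exactly right.
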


	For $p,q\in\mathrm{Crit}(f_N)$, let $\mathcal{M}(p,q)$ be the moduli space of gradient flow lines from $p$ to $q$, modulo reparametrizations.   Let $(C^{\bullet}(f_N;\Z),\delta)$ to be the Morse cochain complex
	\[
	C^{\bullet}(f_N;\Z):=\bigoplus_{p\in \mathrm{Crit}(f_N)} \Z\cdot p
	\]
	We have
	\begin{equation*}
	C^{\bullet}(f_N;\Z)=C^{\bullet}(f;\Z)\otimes \Z[\lambda_1,\ldots,\lambda_{\ell}]/\langle \lambda_1^{N+1},\ldots,\lambda_{\ell}^{N+1}\rangle.
	\end{equation*}
   For convenience, we will write the elements of $\Crit(f_N)$ as $p\otimes \lambda$, where $p\in\Crit(f)$ and $\lambda\in\Crit(\varphi_N)$. The following result will be used in the computation of the equivariant disc potentials.

	\begin{prop}
		\label{prop:uniqueflowline}
		For $N\ge 1$ and $X_1,\ldots,X_{\ell}$, we have
		\[
		\delta(X_i\otimes\bm{1}_{BT})=\bm{1}_{L}\otimes \lambda_i.
		\]
		In particular, there exists a unique gradient flow line from $X_i\otimes \bm{1}_{BT}$ to $\bm{1}_{L}\otimes \lambda_i$.
	\end{prop}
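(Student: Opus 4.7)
The plan is to exhibit a unique gradient flow line from $X_i \otimes \bm{1}_{BT}$ to $\bm{1}_L \otimes \lambda_i$; since $\dim \mathcal{M}(p,q) = |q| - |p| - 1 = 0$ the moduli space is automatically discrete, and uniqueness with a $+1$ sign yields the stated identity. I first reduce to a single-circle problem. The component $V_P$ of $\mathscr{V}_N$ is a negative pseudo-gradient for $f_P$ whose unique maximum $\bm{1}_P$ is a repelling fixed point of the forward flow; since both endpoints have $\vec p = \bm{1}_P$, the trajectory must satisfy $\vec p(t) \equiv \bm{1}_P$ identically. Likewise $V_N$ splits as a sum over the $\ell$ factors of $(\bS^{2N+1})^\ell$, and for every $i' \ne i$ both endpoints project to $(\theta_{i'} = 0, \bm{1}_{BT}^{(i')})$, which is totally repelling (maximum of $\varphi_N$ in the base, and $\theta_{i'} = 0$ is the unstable zero of the fiber ODE $d\theta_{i'}/dt = \sin\theta_{i'}$ near the base maximum). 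Only the constant trajectory has both asymptotes at a totally repelling point, so the problem reduces to counting flow lines in the $i$-th factor $\bS^{2N+1}$ from $(\theta_i = \pi, \bm{1}_{BT}^{(i)})$ to $(\theta_i^{(1)} = 0, \lambda_i^{(i)})$.

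In the chart $\widetilde{U}_{0\ldots 0}$ the base ODE $d|w_{i,k}|^2/dt = 4k |w_{i,k}|^2$, $d\arg w_{i,k}/dt = 0$ yields $|w_{i,k}| \sim e^{2kt}$, so for the orbit to land at $\lambda_i^{(i)}$ (equivalently $w_{i,k}/w_{i,1} \to 0$ for $k \ge 2$) one needs $w_{i,k}^0 = 0$ for every $k \ge 2$. Hence the base flow lines from $\bm{1}_{BT}^{(i)}$ to $\lambda_i^{(i)}$ form a single circle $\{w_{i,1}(t) = w_{i,1}^0 e^{2t},\; w_{i,k} = 0\;(k \ge 2)\}$, parameterized by $\alpha := \arg w_{i,1}^0 \in [0,2\pi)$, consistent with $\dim \mathcal{M}(\bm{1}_{BT}^{(i)}, \lambda_i^{(i)}) = 1$ downstairs. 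Along each such base trajectory the $\rho^{(j)}$ vanish for $j \ge 2$, and the lifted $\theta_i$-ODE becomes
\[
\frac{d\theta_i^{(0)}}{dt} = \rho^{(0)}(t) \sin \theta_i^{(0)} + \rho^{(1)}(t) \sin(\theta_i^{(0)} + \alpha),
\]
with $\rho^{(0)}(t) \to 1$ as $t \to -\infty$ and $\rho^{(1)}(t) \to 1$ as $t \to +\infty$, subject to $\theta_i^{(0)}(-\infty) = \pi$ imposed by the starting critical point $X_i$.

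Evaluating the right-hand side at $\theta_i^{(0)} \equiv \pi$ gives $-\rho^{(1)}(t)\sin\alpha$, so the constant solution $\theta_i^{(0)} \equiv \pi$ solves the ODE precisely when $\alpha \in \{0,\pi\}$; for every other $\alpha$ the trajectory strictly deviates from $\pi$ and, by the limiting ODE $d\theta/dt = \sin(\theta + \alpha)$ at $t = +\infty$, converges in the $\R$-lift to the stable fixed point $\theta_i^{(0)}(+\infty) = \pi - \alpha$. Passing to the chart around $\lambda_i$ via $\theta_i^{(1)} = \theta_i^{(0)} + \alpha$ then gives $\theta_i^{(1)}(+\infty) \equiv \pi \pmod{2\pi}$ for every $\alpha \ne \pi$ (so those lifts converge to $X_i \otimes \lambda_i$), while $\alpha = \pi$ alone produces $\theta_i^{(1)}(+\infty) = 2\pi \equiv 0$, i.e.\ convergence to $\bm{1}_L \otimes \lambda_i$. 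This identifies exactly one flow line. The main obstacle is disentangling the fiberwise $\theta$-dynamics from the chart-transition $\theta$-shift that realizes the Hopf monodromy; the symmetric solution $\theta_i^{(0)} \equiv \pi$ at $\alpha = \pi$ is what cleanly absorbs this shift. The $+1$ sign follows from a routine comparison of the induced orientation on $W^u \cap W^s$ with the standard orientation of the Hopf circle in the $i$-th factor.
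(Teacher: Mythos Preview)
Your argument follows the same route as the paper: reduce to a single $\bS^1$-factor of the Hopf construction, classify the base flow lines in $\C\bP^N$ as a circle parameterized by an angle, and analyze the lifted non-autonomous $\theta_i$-equation along each. Your identification of the distinguished angle as $\alpha=\pi$, via the constant solution $\theta_i^{(0)}\equiv\pi$ together with the chart transition $\theta_i^{(1)}=\theta_i^{(0)}+\alpha$, is correct and in fact more careful than the paper's own write-up on this point (the paper records $\vartheta=0$, but at $\vartheta=0$ one lands at $X_i\otimes\lambda_i$, not $\bm 1_L\otimes\lambda_i$).

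There is, however, a genuine gap in the logic of your opening sentence. The identity $\delta(X_i\otimes\bm 1_{BT})=\bm 1_L\otimes\lambda_i$ is an equality in $C^2(f_N;\Lambda_0)$, so exhibiting a unique trajectory to $\bm 1_L\otimes\lambda_i$ is not sufficient: you must also show that the coefficients of all \emph{other} degree-two critical points vanish. The paper carries this out explicitly. For the fiberwise saddles $(X_j\wedge X_k)\otimes\bm 1_{BT}$ one finds an empty moduli space when $i\notin\{j,k\}$ and two flow lines of opposite sign when $i\in\{j,k\}$. For $\bm 1_L\otimes\lambda_j$ with $j\ne i$, your own reduction argument, applied now to the $i$-th factor where both endpoints sit over $\bm 1_{BT}^{(i)}$ (so the base is constant and the fiber ODE is simply $d\theta_i/dt=\sin\theta_i$), shows that $\theta_i\equiv\pi$ is forced and hence the target value $\theta_i=0$ is never reached. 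These verifications are routine, but the proposition does not follow without them; your reduction ``for every $i'\ne i$ both endpoints project to $(\theta_{i'}=0,\bm 1_{BT}^{(i')})$'' already presupposes the target is $\bm 1_L\otimes\lambda_i$.
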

	
	\begin{proof}
	The possible outputs of $\delta(X_i\otimes\bm{1}_{BT})$ are of degree $2$ critical points of the form $(X_{j}\wedge X_k)\otimes\bm{1}_{BT}$, and $\bm{1}_L\otimes\lambda_j$. It is easy to see that $\mathcal{M}(X_i\otimes\bm{1}_{BT}, (X_{j}\wedge X_k)\otimes\bm{1}_{BT})=\emptyset$ if both $j\ne i$ and $k\ne i$, and $\mathcal{M}(X_i\otimes\bm{1}_{BT}, (X_{j}\wedge X_k)\otimes\bm{1}_{BT})$ consists of two points of opposite orientation if either $j=i$ or $k=i$. 
		
	Suppose $\Phi \colon \R\to L(N)$ is a flow line from $X_i\otimes\bm{1}_{BT}$ to $\bm{1}_{L}\otimes \lambda_j$. Its projection $\pi_N\circ \Phi \colon \R\to (\C\bP^N)^{\ell}$ is a flow line from $\bm{1}_{BT}$ to $\lambda_j$ for the vector field $V$ on $(\C\bP^N)^{\ell}$ given by
		\begin{equation}\label{equ_descriptionvector}
		V|_{U_{j_1\ldots j_{\ell}}}=4\sum_{i=1}^{\ell} \sum_{j\ne j_i} (j-j_i)|z_{i,j}^{(j_1\ldots j_d)}|^2 \cfrac{\partial}{\partial |z_{i,j}^{(j_1\ldots j_{\ell})}|^2},
		\end{equation}
        whose image is contained in $U_{0\ldots 0}$. 
		This means the image of $\Phi$ is contained in $\tilde{U}_{0\ldots 0}$, and we can therefore write 
		\begin{equation}\label{equ_componentsofflow}
		\Phi(t)=\left(\left(\theta_1^{(0\ldots0)}(t),\ldots,\theta_{\ell}^{(0\ldots 0)}(t)\right),\left(z_{\mu,1}^{(0\ldots0)}(t),\ldots,z_{\mu,N}^{(0\ldots0)}(t)\right)_{\mu=1,\ldots,{\ell}}\right),
		\end{equation}
		in term of the coordinates on $\tilde{U}_{0\ldots 0}$. 
		
		We analyze the components in~\eqref{equ_componentsofflow}. As $\pi_N\circ \Phi \colon \R\to (\C\bP^N)^{\ell}$ is a flow line from $\bm{1}_{BT}$ to $\lambda_j$ of the vector field $V$ in~\eqref{equ_descriptionvector}, the component $z_{\mu,\nu}^{(0\ldots 0)}(t)$ is equal to $0$ for $\mu\ne j$ and $\nu\ne 1$, and the component $z_{j,1}^{(0\ldots 0)}(t)$ is equal to $e^{2t+i\vartheta}$ for some $\vartheta\in [0,2\pi)$. Since $\Phi$ is a flow line of $\mathscr{V}_N$ from $X_i\otimes\bm{1}_{BT}$ to $\bm{1}_{L}\otimes \lambda_j$, we have $\theta^{(0\ldots 0)}_{\mu}(t)=0$ for $\mu\ne i$, and  $\theta_i^{(0\ldots 0)}(t)$ satisfies 
		\[
		\lim_{t\to -\infty} \theta_i^{(0\ldots 0)}(t)=\pi,
		\]
		and
		\[\lim_{t\to +\infty} \theta_i^{(0\ldots 0)}(t)+\delta_{ij}\vartheta=0,
		\]
		in addition to 
		\[
		\cfrac{d \theta_i^{(0\ldots 0)}(t)}{dt}=a(t)\sin\left(\theta_i^{(0\ldots 0)}(t)\right)+b(t) \sin\left(\theta_i^{(0\ldots 0)}(t)+\vartheta\right).
		\]
		Here $a(t)=\rho^{(0)}_{1}|_{U_0}\left(e^{2t+i\vartheta}\right)$ and $b(t)=\rho^{(1)}_{1}|_{U_0}\left(e^{2t+i\vartheta}\right)$, and $\{\rho^{(0)}_{1}, \rho^{(1)}_{1}\}$ is the partition of unity for $\C\bP^1$. 
		
		For existence of a flow line when $i=j$, we note that the flow line with $\theta_i^{(0\ldots 0)}(t)=\pi$ and $\vartheta=0$ has the desired asymptotics. As for uniqueness, assume without loss of generality that $\Phi(0)$ is in a neighborhood of $X_i\otimes\bm{1}_{BT}$ such that $\rho^{(0)}_{N}(t)=1$ and $\rho^{(1)}_{N}(t)=0$. In this neighborhood, we have explicitly 
		\[
		\theta_i^{(0\ldots 0)}(t)=2\cot^{-1}\left(e^{-t}\cot\left(\cfrac{\theta_i^{(0\ldots 0)}(0)}{2}  \right)\right).
		\]
		It is easy to see that 
		\[
		\lim_{t\to -\infty} \theta_i^{(0\ldots 0)}(t)=
		\begin{cases}
		\pi &\mbox{if $\theta_i^{(0\ldots 0)}(0)=\pi$} \\
		0 &\mbox{otherwise.}
		\end{cases}
		\]
		Solving for $$\cfrac{d \theta_i^{(0\ldots 0)}(t)}{dt}=0$$ gives
		\[
		\tan \theta_i^{(0\ldots 0)}(t)=\cfrac{b(t)\sin(\vartheta)}{a(t)+b(t)\cos(\vartheta)}.
		\]
		This means we have
		\[
		\lim_{t\to +\infty} \theta_i^{(0\ldots 0)}(t)+\delta_{ij}\vartheta= 
		\begin{cases}
		0 &\mbox{if $\vartheta=0$ and $i=j$} \\
		\pi &\mbox{otherwise.}
		\end{cases}
		\]
	 This also implies that there exists no flow line when $i\ne j$.
	\end{proof}

\subsection{Computing $T$-equivariant disc potentials}\label{equ_computingteqdisc}

	By applying the construction in Section~\ref{sec:G Morse model} and \ref{sec:partial_units} with the choice of Morse-Smale pairs $\{(f_N,\mathscr{V}_N)\}$ made in Section~\ref{sec:Morsethyappr}, one obtains the equivariant Morse model $(CF^{\bullet}_{T} (L;\Lambda_0)^{\dagger}, \mathfrak{m}^{T,\dagger})$ 
    associated to the pair $(L,T)$. Note that we have
    \[
    CF^{\bullet}_{T} (L;\Lambda_0)^{\dagger}= CF^{\bullet}(L;\Lambda_0)\otimes_ {\Lambda_0} \Lambda_0[\lambda_1,\ldots,\lambda_{\ell}],
    \]
    where 
    \[
    CF^{\bullet}(L;\Lambda_0)=C^{\bullet}(f;\Lambda_0)\oplus \Lambda_0 \cdot \wunit_L \oplus \Lambda_0 \cdot \gunit_L
    \]
    is the unital Morse complex associated to $f \colon L\to \R$. In this section, we  compute the $T$-equivariant disc potential of $L$ assuming the minimal Maslov index of $L$ is nonnegative.

	For simplicity of notations, we will suppress $``\blT"$ and denote the unique maximum $\bunit_L$ on $L$ by $\bm{1}^{\vphantom{\blacktriangledown}}_L$. Let $\{ X_{1},\ldots,X_{\ell},Y_1,\ldots,Y_{\nu} \}$ be a basis of $C^{1}(f;\Q)$ with $\{X_i\}$ and $\{Y_j\}$ degree one critical points of $f_T$ and $f_P$ respectively. We will abuse notations and write $X_i:=X_i\otimes \bm{1}_{BT}$ and $Y_i:=Y_i\otimes \bm{1}_{BT}$. We also put  $\bm{\lambda}_i:=\bm{1}_L\otimes \lambda_i$, and $\bm{1}:=\bm{1}_L\otimes \bm{1}_{BT}$.

	Let $b=\sum_{i=1}^{\ell} x_iX_i+\sum_{i=1}^{\nu} y_i Y_i$, $(x_i, y_i)\in\Lambda_{+}^2$. 
We consider the boundary deformation of $\mathfrak{m}^{T,\dagger}_{0}$ by $b$
\[
		\mathfrak{m}^{T,\dagger, b}_{0} (1) =  \mathfrak{m}^{T, b}_0(1) = \mathfrak{m}^{T}_{0} (1) +\mathfrak{m}^{T}_{1}(b)+\mathfrak{m}^{T}_{2}(b,b)+\cdots.
\]
	The first equality above follows  from the fact that the restriction of  $\mathfrak{m}^{T, \dagger}$ to $CF^{\bullet}_{T}(L;\Lambda_0)$ agrees with $\mathfrak{m}^{T}$.
	
	We compute the obstruction $\mathfrak{m}_0^{T,b}(1)$ by counting pearly trees in $L_T \subset X_T$\footnote{To be more precise, we should perform the calculation on the finite dimensional approximations $L(N)\subset X(N)$ (see~\eqref{eq:m^G_gamma}, \eqref{eq:m^G_beta} and \eqref{eq:m^G}). By abuse of notations, the $A_\infty$-structure maps on $L_T$ are regarded as the $A_\infty$-structure maps on the approximation spaces.}. Since $m^T_{k,\beta}(b,\ldots,b)$ is of degree $2-\mu(\beta)$, it vanishes unless $\mu(\beta)=2$ or $0$. 

	The outputs of $\mathfrak{m}_0^{T,b}(1)$ have degree either $0$ or $2$ depending on the Maslov indices of the contributing disc classes. 
	The possible outputs are of the following forms$\colon$
	\begin{enumerate}
		\item (Degree zero) $\bm{1}$, 
		\item (Degree two) $p\otimes \one_{BT}$, where $p\in\Crit(f)$ is a degree $2$ critical point,
		\item (Degree two) $\bm{\lambda}_i$. 
	\end{enumerate}
    Notice that all the critical points above are contained in $\Crit(f_1)$. Thus, $\mathfrak{m}^{T, b}_0(1)$ can be computed by counting pearly trees in $L(1)$.
		
\begin{prop} \label{prop:gen-m0}
	Suppose that $T\cong (\mathbb{S}^1)^\ell$ acts on $(X, \omega_X)$ preserving $\omega_X$. 
	Let $L\subset X$ be a $T$-invariant closed Lagrangian submanifold of product type  $L\cong T \times P$ such that $T$ acts freely on $(\mathbb{S}^1)^\ell$ and acts trivially on $P$. Suppose the minimal Maslov index of $L$ is nonnegative, then
	\begin{equation}\label{equation_m0tbexp}
		\mathfrak{m}_0^{T,b}(1) = \fm_{\vphantom{T}0}^b(1) \otimes \bm{1}_{BT} + \sum_{i=1}^\ell h_i(\vec{x},\vec{y})  \bm{\lambda}_i
	\end{equation}		
where $\fm$ is the $A_{\infty}$-structure on  $CF^{\bullet}(L;\Lambda_0)$, and $h_i(\vec{x},\vec{y}) \in \Lambda_0$. Moreover, if $\fm_0^b(1) = W(\vec{x},\vec{y})  \one_{L}$ for some $W(\vec{x},\vec{y})\in \Lambda_+$, then $(CF^{\bullet}_{T} (L;\Lambda_0)^{\dagger}, \mathfrak{m}^{T,\dagger})$ is weakly unobstructed.		
	\end{prop}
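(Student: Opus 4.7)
The plan is to compute $\mathfrak{m}_0^{T,b}(1)$ by counting pearly trees on the first approximation $L(1)\subset X(1)$ and to sort the contributions according to where the output flow terminates. First, a degree count: each factor of $b$ has degree $1$, so a pearly tree $\Gamma$ with $k$ inputs and total disc class $\beta$ produces a chain of degree $2-\mu(\beta)$. Since $L$ has nonnegative minimal Maslov index, the output degree is at most $2$. The outputs actually appearing in $\mathfrak{m}_0^{T,b}(1)$ lie in the image of $CF_T^{\bullet}(L;\Lambda_0)^{\dagger}\hookrightarrow CF^{\bullet}(L(1);\Lambda_0)^{\dagger}$, so by our concrete description of $\Crit(f_1)$ they must be of one of the three forms $\bm{1}$ (degree $0$), $p\otimes\bm{1}_{BT}$ for $p\in\Crit(f)$ of degree $2$, or $\bm{\lambda}_i$ (degree $2$).

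Next, I would show that the total contribution to $\bm{1}$ and to $p\otimes\bm{1}_{BT}$ reassembles into $\mathfrak{m}_0^b(1)\otimes\bm{1}_{BT}$. The crucial observation is that each $X_i$ and $Y_j$ in $b$ is a critical point of $f_1$ projecting to the unique maximum $\bm{1}_{BT}$ of $\varphi_1$, and $(p,\bm{1}_{BT})$ does too. Since the base component of $\mathscr{V}_1$ strictly decreases $\varphi_1$ off of $\Crit(\varphi_1)$, any flow line of $\mathscr{V}_1$ whose endpoints project to $\bm{1}_{BT}$ is itself contained in the fiber over $\bm{1}_{BT}$. Together with Proposition~\ref{prop:disc classes}, which forces every pseudo-holomorphic disc component of the pearly tree to lie in a single fiber of $\pi:X(1)\to T^*BT(1)$, this confines the whole pearly tree to $L\subset L(1)$. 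Corollaries~\ref{cor_mas} and~\ref{cor_reg}, together with the fiber compatibility of perturbations provided by Proposition~\ref{prop:extension}, then identify the contributing moduli spaces on $L(1)$ with those of the non-equivariant Morse model on $L$, with compatible orientations. This yields the $\mathfrak{m}_0^b(1)\otimes\bm{1}_{BT}$ summand.

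The remaining contributions fall in $\bigoplus_{i=1}^{\ell}\Lambda_0\cdot \bm{\lambda}_i$, so they assemble into coefficients $h_i(\vec{x},\vec{y})\in\Lambda_0$, producing~\eqref{equation_m0tbexp}. Pearly trees contributing to $h_i$ must involve Morse flow in the base direction from $\bm{1}_{BT}$ down to $\lambda_i$; the simplest such tree is the unique flow line of Proposition~\ref{prop:uniqueflowline}, producing the leading term $x_i\bm{\lambda}_i$ already appearing in $\mathfrak{m}_1^T(X_i)$.

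For the final assertion, assume $\mathfrak{m}_0^b(1)=W(\vec{x},\vec{y})\,\bm{1}_L$. Then~\eqref{equation_m0tbexp} becomes $\mathfrak{m}_0^{T,b}(1) = W(\vec{x},\vec{y})\,\bm{1}+\sum_i h_i(\vec{x},\vec{y})\,\bm{\lambda}_i$, which satisfies the hypothesis of Lemma~\ref{lem:G_unobstr} (equivalently Corollary~\ref{cor:L_L_G}). That lemma then produces $b^{\dagger}\in CF^{odd}_T(L;\Lambda_+)^{\dagger}$ with $\mathfrak{m}^{T,\dagger,b^{\dagger}}_0(1)\in H_T^{\bullet}(\mathrm{pt};\Lambda_0)\cdot\wunit$, establishing weak unobstructedness. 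The main technical obstacle is the fiber-separation argument in the second step: one must make precise that the Morse-flow confinement to the fiber over $\bm{1}_{BT}$, combined with the fiber-product Kuranishi structures fixed in Proposition~\ref{prop:extension}, reproduces the non-equivariant moduli count \emph{exactly}, including signs coming from the $G$-invariant relative spin structure.
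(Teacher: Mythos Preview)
Your proposal is correct and follows essentially the same approach as the paper. The paper's proof is very terse---it simply asserts that the first two types of outputs come from pearly trees confined to the fiber $L$ over $\bm{1}_{BT}$ and invokes Proposition~\ref{prop:extension} for the identification with $\fm_0^b(1)$, then applies Corollary~\ref{cor:L_L_G}; your write-up supplies exactly the details (the monotonicity of $\varphi_1$ under the base component of the flow, together with Proposition~\ref{prop:disc classes}) that the paper suppresses.
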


\begin{proof}
By the discussion above, $\mathfrak{m}_0^{T,b}(1)$ can be expressed as 
\[
	\mathfrak{m}_0^{T,b}(1)= g_{\bm{1}}(b)\cdot \bm{1}+ \sum g_{p}(b)\cdot p\otimes \one_{BT}+ g_{\lambda_i}(b)\cdot \bm{\lambda}_i,
\]	
where the coefficients $g_{\bm{1}}(b)$, $g_{p}(b)$ and $g_{\lambda_i}(b)$ are elements of $\Lambda_0$ depending on $b$. 
We note that both $\bm{1}$ and $p\otimes \one_{BT}$ are critical points of the Morse function $f$ on $L$. By the definition of the equivariant Morse model (Definition \ref{def:G_Morse_model}), it can be computed using the $A_{\infty}$-algebra  $(CF^{\bullet}(L;\Lambda_0), \fm)$. In particular, by using the degree condition, we classify on the possible configurations on $\Gamma$ in $\mathfrak{m}_0^{T,b}(1)$.
 Moreover, we have
\[
 g_{\bm{1}}(b)\cdot \bm{1}+ \sum g_{p}(b)\cdot p\otimes \one_{BT} =  \fm_{\vphantom{T}0}^b(1) \otimes \bm{1}_{BT} . 
\]
Since $b=\sum_{i=1}^{\ell} x_iX_i+\sum_{i=1}^{\nu} y_i Y_i$ for $(x_i, y_i)\in\Lambda_{+}^2$ by definition, we can rewrite $g_{\lambda_i}(b)$ as a function $h_i(\vec{x},\vec{y})$ in variables $x_i$ and $y_i$. In summary, $\mathfrak{m}_0^{T,b}(1)$ has the expression \eqref{equation_m0tbexp}. 


 The last assertion follows from Corollary \ref{cor:L_L_G}, namely, if $\fm_0^b(1) =W(\vec{x},\vec{y})\one_{L}$, then	
	\begin{equation}\label{equation_m0tbexp2}
	\mathfrak{m}_0^{T,b}(1) =\left( W(\vec{x},\vec{y})  +\sum_{i=1}^{\ell} h_i(\vec{x},\vec{y}) \lambda_i\right) \bm{1}.
	\end{equation}
	\end{proof}
	
	In the following, we compute $\fm_0^b(1)$ and $h_i(\vec{x},\vec{y})$ explicitly under additional assumptions. 
	We begin by simplifying $h_i(\vec{x},\vec{y})$ under the condition that the minimal Maslov index of $L$ is at least $2$.
	
		\begin{lemma} \label{lem:h_i}
		In the situation of Proposition \ref{prop:gen-m0}, if in addition, the minimal Maslov index of $L$ is at least $2$, then $h_i(\vec{x},\vec{y}) =x_i$ in~\eqref{equation_m0tbexp}.

	\end{lemma}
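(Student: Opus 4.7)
The plan is to identify the pearly tree contributions to the $\bm{\lambda}_i$-coefficient of $\mathfrak{m}_0^{T,b}(1)$ by a degree and Maslov index analysis, then evaluate the surviving contributions directly. Since $b \in CF^1_T(L;\Lambda_+)^{\dagger}$ has degree one and $\mathfrak{m}^T_{k,\beta}(b^{\otimes k})$ has degree $2-\mu(\beta)$, any contribution to the degree-two generator $\bm{\lambda}_i$ requires $\mu(\beta)=0$. Under the minimal Maslov index $\geq 2$ hypothesis, this forces every interior vertex of every contributing pearly tree to be decorated with the constant disc class $\beta_0$, and therefore
\[
h_i(\vec{x},\vec{y}) \;=\; \sum_{k \geq 1} \mathrm{coeff}_{\bm{\lambda}_i}\, \mathfrak{m}^T_{k,\beta_0}(b^{\otimes k}).
\]

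For $k = 1$, $\mathfrak{m}^T_{1,\beta_0}$ is the Morse coboundary $\delta$. By Proposition~\ref{prop:uniqueflowline}, the coefficient of $\bm{\lambda}_i$ in $\delta X_k$ equals $\delta_{ik}$. For $\delta Y_j$, any flow line in $L(N)$ from $Y_j = ((0,\ldots,0), y_j^{\mathrm{old}}, \bm{1}_{BT})$ to $\bm{\lambda}_k = ((0,\ldots,0), \bm{1}_P, \lambda_k)$ would project under $L \to P$ to a $V_P$-flow line from the degree-one critical point $y_j^{\mathrm{old}}$ to the maximum $\bm{1}_P$ of $f_P$; this is impossible for a negative pseudo-gradient. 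Hence $\delta Y_j$ has no $\bm{\lambda}_k$-component, and the $k=1$ contribution equals $x_i$.

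For $k \geq 2$, the claim is vanishing. The key observation is that for the constant disc class $\beta_0$, all boundary marked points of the interior-vertex disc evaluate to a single point of $L(N)$. Hence at the singular chain level, $\tilde{\mathfrak{m}}^T_{k,\beta_0}(\Delta_{p_1},\ldots,\Delta_{p_k})$ is supported on $\bigcap_{j=1}^k \overline{W^u(p_j)} \subset L(N)$, and Morse projection to $\bm{\lambda}_i$ amounts to intersecting with the $2$-dimensional submanifold $W^s(\bm{\lambda}_i)$. Since each $\overline{W^u(p_j)}$ has codimension one, the triple intersection has expected dimension $2-k$, which is strictly negative for $k \geq 3$ and hence generically empty; the same dimension count handles the non-minimal pearly trees $\Gamma \in \bm{\Gamma}_{k+1}$ with multiple interior vertices by applying it at each vertex.

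The main obstacle is the $k = 2$ case, where the expected dimension is zero. I propose to use the explicit form of $\mathscr{V}_N$ from Section~\ref{sec:Morsethyappr}: arguing as in the proof of Proposition~\ref{prop:uniqueflowline}, any point in $\overline{W^u(p_1)} \cap \overline{W^u(p_2)} \cap W^s(\bm{\lambda}_i)$ must lie on the unique flow line to $\bm{\lambda}_i$, which forces the fiber coordinates to satisfy $\theta_i = \pi$ and $\theta_k = 0$ for $k \neq i$, so that $p_1 = p_2 = X_i$. The resulting coincident-input configuration is non-transverse and, upon a generic perturbation of the Kuranishi data together with the symmetrization of $b \otimes b$ (graded commutativity of the Morse cup product on odd-degree classes), yields a vanishing signed count. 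Combining both steps gives $h_i(\vec{x},\vec{y}) = x_i$ as required.
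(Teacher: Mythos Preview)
Your reduction to Maslov index zero (hence to Morse flow trees) is correct, as is your treatment of the $k=1$ term via Proposition~\ref{prop:uniqueflowline} and the exclusion of $Y_j$-contributions. Your degeneracy argument for the single-vertex tree with $k \geq 3$ is also sound: after perturbing the evaluation maps, the output chain has dimension $d_N - 2$ but is supported on a set of dimension at most $d_N - 3$ (the extra $(k-2)$ dimensions coming from the marked-point configuration on the constant disc), so $\Pi$ annihilates it.

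However, your $k=2$ argument for the coincident-input case $p_1 = p_2 = X_i$ has a genuine gap. Graded commutativity of the Morse cup product holds only up to chain homotopy, not on the nose; and since $\bm{\lambda}_i = \delta(X_i)$ is $\delta$-exact, its coefficient in $\mathfrak{m}_{2,\beta_0}^T(X_i,X_i)$ is a chain-level quantity not constrained by cohomology-level antisymmetry. The assertion that ``generic perturbation'' of a non-transverse configuration yields a vanishing count is likewise unjustified: a generic perturbation produces \emph{some} well-defined integer, but there is no a priori reason it is zero. The same issue recurs for multi-vertex flow trees with $k\geq 3$ when all inputs are $X_i$: once $G$ is applied at an intermediate vertex, the resulting forward-orbit chain need not be degenerate, so your dimension count no longer forces $\Pi=0$.

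The paper closes this gap by a different mechanism. It exploits the residual $\bS^1$-action rotating the $i$-th circle factor of the fiber $L$, which commutes with the structure group of $L(1)\to BT(1)$ and is therefore globally defined on $L(1)$. One perturbs the flow lines from the first $k-1$ copies of $X_i$ by this rotation. Over the projection $\pi(\gamma)$ of the unique flow line $\gamma$ from $X_i$ to $\bm{\lambda}_i$, the rotated copies of $\gamma$ are disjoint from $\gamma$ itself, so the perturbed moduli $\cM_\Gamma(f_1;X_i,\ldots,X_i;\bm{\lambda}_i)$ are empty for every $\Gamma\in\bm{\Gamma}_{k+1}$ and every $k\geq 2$. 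This geometric perturbation, rather than a cohomological symmetry, is the missing ingredient.
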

	\begin{proof}
	The terms $h_i(\vec{x},\vec{y})$ are contributed by disc classes of Maslov index $0$ by degree reason. 
    Since $L$ has minimal Maslov index at least two, the only contribution comes from the trivial disc class $\beta_0$, hence the computation of the terms $\fm^T_k(b,\ldots,b)$ reduces to $\fm^T_{\Gamma}(b,\ldots,b)$, where $\Gamma\in \bm{\Gamma}_{k+1}$ is a stable planar rooted tree with all interior vertices decorated by $\beta_0$. In other words, the configurations we are counting are Morse flow trees in $L_1$.
	
	 By Proposition \ref{prop:uniqueflowline}, we have $\fm^T_1(X_i) = \bm{\lambda}_i$. We will show momentarily that the coefficients of $\bm{\lambda}_i$ in the terms $\mathfrak{m}_{k}^T(X_{\nu_1},\ldots,X_{\nu_k})$ are zero for $k\ge 2$ and $(\nu_1,\ldots,\nu_k) \in \{1,\ldots,\ell\}^k$. First, let us consider the case when $\nu_j\ne i$ for some $j$. In the proof of Proposition \ref{prop:uniqueflowline}, we have showed that there exists no flow line from $X_{i_j}$ to $\bm{\lambda}_i$. This means for generic small perturbations, the moduli spaces $\cM_{\Gamma}(f_1;X_{\nu_1},\ldots,X_{\nu_k};\bm{\lambda_i})$ are empty. 
	 
     Now, we consider $\fm^T_k(X_{i},\ldots,X_{i})$, $k\ge 2$. Since rotations on the $i$-th circle factor of $L$ commute with the structure group of $L_1\to \C\bP^1$, we have a global $\bS^1$-action on $L_1$ rotating the $i$-th circle factor of the fibers $L$. To achieve transversality for the moduli spaces $\cM_{\Gamma}(f_1;X_{i},\ldots,X_{i};\bm{\lambda_i})$, we can perturb the the flow lines from the first $k-1$ inputs using the $\bS^1$-action. We have the unique flow line $\gamma$ from $X_{i}$ to $\bm{\lambda}_i$ as in Proposition \ref{prop:uniqueflowline}. For a perturbed flow line from the first $k-1$ inputs to intersect with $\gamma$ (in order to form a flow tree), its projection in $BT(1)$ must coincide with $\pi(\gamma)$. However, over $\pi(\gamma)$, the flow lines from the first $k-1$ inputs are simply the rotations of $\gamma$ by $\bS^1$, which do not intersect with $\gamma$. Thus, $\cM_{\Gamma}(f_1;X_{i},\ldots,X_{i};\bm{\lambda_i})$ are empty for generic small perturbations.
     
     Similarly, since the fiber bundle $P_{T}=P\times BT$ is trivial, there exists no flow line from $Y_j$ to $\bm{\lambda}_i$ for any $i,j$. Therefore the coefficients of $\bm{\lambda}_i$ in the terms $\fm^T_k(b,\ldots,Y_j,\ldots,b)$ are zero. 
		\end{proof}
	
		Thus, in the setting of Lemma \ref{lem:h_i}, we have
		$$ \mathfrak{m}_0^{T,b}(1) = \fm_0^b (1) \otimes \one_{BT} + \sum_{i=1}^\ell x^i \bm{\lambda}_i. $$
		
		In \cite{FOOO-T}, it was shown that the moment map fibers of a compact semi-Fano toric manifold are weakly unobstructed in the de Rham model.  The following is the analogous statement in our setting. 
		
		\begin{lemma}
		\label{lem:toric_m_0}
			Let $L$ be a regular moment map fiber of a compact semi-Fano toric manifold $X$. Then $\fm_0^b(1)\in \Lambda_0= W^L\cdot \one_L$ for some $W^L(b)\in \Lambda_0$. 
		\end{lemma}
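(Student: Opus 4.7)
The plan is to combine a degree analysis, the vanishing of the Morse differential on $C^\bullet(f_T;\Lambda_0)$, and the weakly unobstructedness of toric Lagrangian moment map fibers established by Fukaya-Oh-Ohta-Ono in the singular chain model \cite{FOOO-T}, transferred to our Morse model via the unital $A_\infty$-homotopy equivalence of Theorem~\ref{thm_foocan}.

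First, I carry out a degree analysis. Since $b=\sum_{i=1}^d x_i X_i\in C^1(f;\Lambda_+)$ contains no $\wunit$ or $\gunit$ component, the pearly-tree description of $\fm^b_k$ shows that $\fm_0^b(1)$ lies entirely in the Morse part $C^\bullet(f;\Lambda_0)$. The term $\fm_{k,\beta}(b^{\otimes k})$ has degree $2-\mu(\beta)$, and since $L\cong T^d$ is a moment map fiber of a compact semi-Fano toric manifold, every effective disc class satisfies $\mu(\beta)\in 2\Z_{\ge 0}$. Thus the output lies in even degrees $\le 2$. Among the Morse critical points of $f_T=\sum_i\cos\theta_i$, only degrees $0$ and $2$ are attainable: the unique degree-$0$ critical point is $\bunit=\one_L$, and the degree-$2$ critical points are $X_i\wedge X_j$, $1\le i<j\le d$. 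Hence
\[
\fm_0^b(1)\;=\;W(b)\,\bunit\;+\;\sum_{1\le i<j\le d}c_{ij}(b)\,(X_i\wedge X_j)
\]
for some $W(b),c_{ij}(b)\in\Lambda_0$, and the task reduces to showing $c_{ij}(b)=0$.

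Next, I verify that the Morse coboundary $\delta$ on $C^\bullet(f_T;\Lambda_0)$ vanishes. For adjacent critical points $X_I$ and $X_{I\cup\{i_0\}}$, any flow line of $\mathscr{V}_0=\sum_i\sin\theta_i\,\partial_{\theta_i}$ between them must flip $\theta_{i_0}$ from $0$ to $\pi$ while keeping the other coordinates fixed, and there are exactly two such flow lines (going each way around the $i_0$-th circle) carrying opposite signs. They cancel in the signed count, so $\delta=0$, and consequently $H^\bullet(C^\bullet(f_T;\Lambda_0),\delta)=C^\bullet(f_T;\Lambda_0)$; in particular, distinct Morse critical points represent distinct cohomology classes.

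The main step is then to invoke \cite{FOOO-T}: in the singular chain model $(C^\bullet(L;\Lambda_0)^+,\tilde{\fm}^+)$, the moment map fiber of a semi-Fano toric manifold is weakly unobstructed with $\tilde{\fm}_0^{\tilde{b}}(1)\in\Lambda_0\cdot\bm{e}$, obtained by using $T^d$-equivariant Kuranishi structures together with $T^d$-invariant singular representatives (for instance, Poincar\'e duals of toric divisors restricted to $L$) of a basis of $H^1(L;\Q)$; any $T^d$-invariant chain output is then forced to lie in cohomological degree $0$. By Theorem~\ref{thm_foocan}, the unital $A_\infty$-homotopy equivalence between Morse and singular models induces the natural cohomological isomorphism, so the class of $\fm_0^b(1)$ in $H^\bullet(C^\bullet(f_T;\Lambda_0),\delta)$ must likewise lie in $\Lambda_0\cdot[\bunit]\subset H^0$. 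Combined with $\delta=0$, this forces $c_{ij}(b)=0$, yielding $\fm_0^b(1)\in\Lambda_0\cdot\bunit=\Lambda_0\cdot\one_L$. The main obstacle is this final transfer: one must trace the $A_\infty$-homotopy equivalence (and the pushforward of bounding cochains) carefully enough to identify the cohomology classes of $\fm_0^b(1)$ and $\tilde{\fm}_0^{\tilde{b}}(1)$. An alternative is to adapt FOOO-T's $T^d$-equivariance argument directly to the pearly-tree computation, using $T^d$-invariant Kuranishi perturbations on the $J$-holomorphic disc components and then applying $\Pi$ to pass to the Morse complex, which reaches the same conclusion after invoking $\delta=0$.
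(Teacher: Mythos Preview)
Your proposal takes a genuinely different route from the paper, and the transfer step you flag as ``the main obstacle'' is indeed a real gap. The difficulty is this: even granting FOOO-T's result in the singular (or de Rham) model, you need that \emph{this particular} $b=\sum x_i X_i$ in the Morse model is a weak bounding cochain, not merely that its pushforward $\ff_*(b)$ is, or that some gauge-equivalent element is. A unital $A_\infty$-homotopy equivalence sends weak bounding cochains to weak bounding cochains, but the converse direction (pulling back the property along $\ff_*$) is not automatic without further argument; the relation between $\fm_0^b(1)$ and $\tilde{\fm}_0^{\ff_*(b)}(1)$ involves higher $\ff_k$-terms that do not obviously vanish. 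Your fallback of running FOOO-T's $T^d$-equivariance directly in the pearly-tree model is more promising, but you would still need $T^d$-equivariant perturbations compatible with the Morse-model construction of Section~\ref{sec:Morse model}, which is not set up here.

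The paper's proof is much more elementary and avoids all of this. The key observation you underuse is that semi-Fano implies the minimal Maslov index of \emph{non-constant} discs is at least $2$ (not just $\mu(\beta)\in 2\Z_{\ge 0}$). Hence any contribution to a degree-$2$ output of $\fm_0^b(1)$ must come from the constant class $\beta_0$, i.e.\ from pure Morse flow trees with no holomorphic discs at all. This reduces the problem to a classical Morse-theoretic computation on $T^d$: $\fm_1(X_i)=0$ since $f$ is perfect; for $k\ge 2$ with a repeated input $X_\nu$, perturbing the corresponding hypertori by the $\bS^1$-action in the $\nu$-th factor makes them disjoint, so $\fm_k$ vanishes; for distinct inputs, the sum $\sum_{\sigma\in S_k}\fm_k(X_{\nu_{\sigma(1)}},\ldots,X_{\nu_{\sigma(k)}})$ vanishes by orientation considerations. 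Thus $c_{ij}(b)=0$ directly, with no appeal to FOOO-T or transfer. Your $\delta=0$ computation is correct and is essentially the paper's $\fm_1(X_i)=0$ step, but the rest of your argument replaces an easy local computation with heavy machinery whose application here is incomplete.
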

	
		\begin{proof}
		The only possible outputs of $\fm_0^b(1)$ are multiples of $\one_L$ and degree two critical points of $f$ (which are of the form $X_i\wedge X_j$), contributed by Maslov index zero and two disc classes, respectively. Since $X$ is semi-Fano, $L$ has minimal Maslov index $2$. This means only Morse flow trees contribute to the degree two critical points. We have $\fm_1(X_i)=0$, since $f$ is a perfect Morse function on $L$. For $\fm_k(X_{\nu_1},\ldots,X_{\nu_k})$ and $k\ge 2$, if there are two repeated inputs $X_{\nu_i}=X_{\nu_j}$, then by perturbing unstable hypertori of $X_{\nu_i}$ using the $\bS^1$-action rotating the $\nu_i$-th circle factor of $L$, the perturbed hypertori do not intersect and hence $\fm_k(X_{\nu_1},\ldots,X_{\nu_k})=0$. In the case of distinct inputs, we have 
		\[
		\sum_{\sigma\in S_k} \fm_k\left(X_{\nu_{\sigma(1)}},,\ldots,X_{\nu_{\sigma(k)}}\right)=0, \quad k\ge2,
		\]
		due to the orientations on the corresponding moduli spaces.
		
		\end{proof}
		
		\begin{remark}
			\label{rmk:perturbation}
		Before proceeding, a remark is in order addressing the perturbations used in the proof of Lemma \ref{lem:h_i}, and \ref{lem:toric_m_0}. We do not perturb the input singular chains when choosing perturbation for a fiber product $\cM_{k+1}(\beta;L(N);\vec{P})$ since the singular chains are fixed during the inductive construction and doing so would destroy the $A_{\infty}$-structure. On the other hand, since the Kuranishi structure on the disc moduli are weakly submersive, we can realize the perturbation of input singular chains by perturbing the respective evaluation maps.
		\end{remark}		
		
		Combining Lemma \ref{lem:h_i} and Lemma \ref{lem:toric_m_0},  to show that the equivariant toric superpotentials \eqref{equ_Giventalsuper} coincide with the equivariant disc potentials we defined in Definition \ref{def:G_disc_potential}, it remains to show that $W^L$ agrees with the Givental-Hori-Vafa superpotential $W^{\mathrm{toric}}$. The equality holds in the de Rham model due to divisor axiom (see \cite{Fukaya10}, Section 11 in \cite{FOOO-T}). On the other hand, in the singular chain model, we do not have divisor axiom, and the expression of $W^L$ depends on choice of perturbations. We will show below that we can choose certain perturbations such that we have the equality $W^L=W^{\mathrm{toric}}$.
		
		Recall that the holomorphic disc classes are generated by the basic disc classes $\beta_i$ for $1\le i\le m$ \cite{CO}.  Moreover, by \cite{FOOO-T}, a stable disc class of Maslov index two must be of the form $\beta_i+\alpha$ for some effective curve class $\alpha$ with $c_1(\alpha)=0$.  Let $n_1(\beta_i+\alpha)$ be the degree of the virtual fundamental class $\ev_*[\cM_1(\beta_i+\alpha)]$.  In the Morse model, it is given by the intersection number of $\ev_*[\cM_1(\beta_i+\alpha)]$ with the maximal point $\one_L$.

	\begin{theorem}[Equivariant toric superpotential]
		\label{thm:superpotential}
		Let $X$ be a compact semi-Fano toric manifold with $\dim_{\C} X=d$, and let $v_1,\ldots,v_m\in \Z^d$ be the primitive generators of the one-dimensional cones of the fan $\Sigma$ defining $X$. Let $L\subset X$ be a regular moment map fiber. Then, 
		\[
		\mathfrak{m}^{T,b}_{0}(1) = W^L (x_1,\ldots,x_d) \bm{1}+\sum_{i=1}^d x_i \bm{\lambda}_i=\left(W^L (x_1,\ldots,x_d)+\sum_{i=1}^d \lambda_i x_i \right)\one:=W^L_T(x_1,\ldots,x_d)  \one,
		\]
		where
		\begin{equation}
		\label{eq:W_disc}
		W^L (x_1,\ldots,x_d) = \sum_{i=1}^m \left(\sum_{\alpha: c_1(\alpha)=0} n_1(\beta_i + \alpha) \bT^{\omega_X(\alpha)}\right) \bT^{\omega_X(\beta_i)} \exp (v_i \cdot (x_1,\ldots,x_d)).
		\end{equation}		
		In particular in the Fano case,	
		\[
		\mathfrak{m}^{T,b}_{0}=\left(W^{\mathrm{toric}}(e^{x_1},\ldots,e^{x_d}) +\sum_{i=1}^d \lambda_i x_i \right)\one,
		\]
		where $W^{\mathrm{toric}}$ is the superpotential of Givental and Hori-Vafa \cite{Givental,HV,MS_big_book}.
	\end{theorem}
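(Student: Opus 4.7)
My plan is to treat this as a specialization of the general framework developed in Sections~\ref{sec:G Morse model}--\ref{sec:partial_units}, combined with the classical non-equivariant toric computation of Cho--Oh and Fukaya--Oh--Ohta--Ono. Since $L \cong T^d$ is a regular moment map fiber of a compact toric manifold, it is of pure product type with $\ell = d$ and no $P$-factor, so Proposition~\ref{prop:gen-m0} applies and yields
\[
\mathfrak{m}_0^{T,b}(1) \;=\; \fm_0^b(1)\otimes\one_{BT} \,+\, \sum_{i=1}^d h_i(\vec{x})\,\bm{\lambda}_i.
\]
Because $X$ is semi-Fano, the minimal Maslov index of $L$ equals $2$; hence Lemma~\ref{lem:h_i} applies and forces $h_i(\vec{x})=x_i$ for each $i$. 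Thus the equivariant part of the potential is exactly the tautological linear piece $\sum_i x_i\lambda_i$, which will be recognized as the equivariant part of the Givental--Hori--Vafa potential.

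It remains to compute the non-equivariant part $\fm_0^b(1)$. Lemma~\ref{lem:toric_m_0} already guarantees $\fm_0^b(1) = W^L(\vec x)\,\one_L$ for some $W^L(\vec x)\in\Lambda_0$, so the task is reduced to identifying this scalar with the formula~\eqref{eq:W_disc}. I would do this in two steps. First, restrict attention to the contributing disc classes: by \cite{FOOO-T}, every Maslov-two stable disc class bounded by a toric fiber has the form $\beta_i+\alpha$ where $\beta_i$ is a basic disc and $\alpha$ is an effective curve class with $c_1(\alpha)=0$; any higher Maslov index only produces outputs of degree $\le 0$ in $C^\bullet(f;\Lambda_0)$, and the degree-$2$ critical points $X_i\wedge X_j$ do not appear in $\fm_0^b(1)$ by Lemma~\ref{lem:toric_m_0}. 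Second, for each class $\beta_i+\alpha$ I compute the contribution to the coefficient of $\one_L$: the boundary $\partial\beta_i$ has homology class $v_i \in H_1(L;\Z) \cong \Z^d$ and the unstable hypertorus of $X_j$ (with $f=\sum\cos\theta_j$) is the Poincar\'e dual of $d\theta_j$, so the generating function over all boundary insertions of $b=\sum x_j X_j$ produces $\exp(v_i\cdot(x_1,\dots,x_d))$. Multiplying by the symplectic weight $\bT^{\omega_X(\beta_i)+\omega_X(\alpha)}$ and the one-pointed invariant $n_1(\beta_i+\alpha)$ (the intersection of $\ev_*[\cM_1(\beta_i+\alpha)]$ with $\one_L$) yields~\eqref{eq:W_disc}.

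The technical heart of the argument is justifying this computation in the Morse model. The original open Gromov--Witten calculation of Cho--Oh and FOOO--T is phrased in the de Rham or singular chain model, so I need to transfer it to the pearly tree count. For the insertion combinatorics, the key input is that each boundary marked point of $\ev_*[\cM_1(\beta_i+\alpha)]$ factors through the $S^1$-equivariant push-forward and thus, for torus-invariant basic classes $\beta_i$, fibers as a free orbit winding around $v_i$. The unital homotopy equivalence in Theorem~\ref{thm_foocan} identifies the coefficient of $\one_L$ computed in either model; concretely, expanding $\fm_k(b,\dots,b)$ and invoking the divisor-type equation for toric fibers (equivalently, that the evaluations of $\ev_*[\cM_1(\beta_i+\alpha)]$ pair with $X_j$ by $(v_i\cdot e_j)$) produces the exponential $\exp(v_i\cdot\vec x)$ after summing over $k$ with the factorial $\tfrac{1}{k!}$ absorbed by the symmetrization over boundary insertions. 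The main obstacle is precisely this book-keeping of the combinatorial factors and orientations when passing from singular chains to Morse flow trees; once verified, combining with the equivariant linear term $\sum_i x_i\lambda_i$ gives $W^L_T=W^L+\sum_i\lambda_i x_i$ and, in the Fano case $\alpha=0$, reduces to Givental's $W^{\mathrm{toric}}(e^{x_1},\dots,e^{x_d})+\sum_i\lambda_i x_i$ after the substitution $z_i=e^{x_i}$.
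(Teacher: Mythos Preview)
Your overall strategy matches the paper's: reduce the equivariant side to $\sum_i x_i\lambda_i$ via Proposition~\ref{prop:gen-m0} and Lemma~\ref{lem:h_i}, reduce to computing $W^L$ via Lemma~\ref{lem:toric_m_0}, and then identify the contributing classes as $\beta_i+\alpha$ with $c_1(\alpha)=0$.

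The one place where your argument is too loose, and where the paper does real work, is the derivation of the exponential factor $\exp(v_i\cdot\vec{x})$ in the Morse model. You appeal to a ``divisor-type equation'' and say the $1/k!$ is ``absorbed by the symmetrization over boundary insertions,'' but in the pearly-tree model there is no automatic symmetrization: the $\fm_k$ are defined by a fixed system of multisections, and different transversal choices give genuinely different expressions for $W^L$. The paper makes this explicit in the remark immediately following the proof, where an asymmetric perturbation on $\bP^1$ yields $\bT^{A/2}\bigl(\tfrac{1}{1-x_1}+(1-x_1)\bigr)$ rather than $\bT^{A/2}(e^{x_1}+e^{-x_1})$. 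So the exponential is not a model-independent consequence of homotopy equivalence with the de Rham model; it is the result of a \emph{specific} perturbation choice. The paper constructs this choice by pulling back the Kuranishi structure and multisection from $\cM_1(\beta_i+\alpha;L)$ along the forgetful map, enlarging the obstruction bundle by $\bigoplus_i T_{u(z_i)}L$, and then defining a new multisection as the $S_k$-average over small translates $v_{\sigma(1)},\ldots,v_{\sigma(k)}$ of the input hypertori in the $\partial\beta_i$-direction. This produces a zero locus weighted by $1/k!$ whose intersection with $\one_L$ is $\tfrac{n_1(\beta_i+\alpha)\prod_j m_j^{r_j}}{k!}$, and summing over the $\tfrac{k!}{\prod_j r_j!}$ arrangements yields the exponential. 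Your proposal correctly flags this step as the ``main obstacle'' but does not supply the mechanism; the paper's contribution here is precisely to build the perturbation that makes the divisor-axiom combinatorics hold on the nose.
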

	
	\begin{proof}
	Recall from Proposition \ref{prop:gen-m0} that $W^L\cdot \one_L=\fm^b_0(1)$. Since $L$ has minimal Maslov index $2$, by degree reason, a stable rooted tree $\Gamma\in\bm{\Gamma}_{k+1}$ contributing to the term $\fm_k(b,\ldots,b)$, $k\ge 1$, must have exactly one interior vertex decorated by a Maslov index $2$ disc class and the remaining interior vertices are decorated by the constant disc class $\beta_0$. 
	
	Let us abuse notation and denote by $X_j$ the unstable hypertorus of the degree one critical point $X_j$ of the perfect Morse function $f=\sum_{i=1}^{d}\cos(\theta_i)$ on $(\bS^1)^d$. We can choose an identification $L\cong (\bS^1)^d$ such that the boundaries of the finitely many holomorphic discs in basic disc classes $\beta_1,\ldots,\beta_m$, passing through the maximal point $\one_L$ do not intersect with $X_\mu \cap X_\nu$ for any $\mu\not=\nu$. This means the only configurations $\Gamma\in\bm{\Gamma}_{k+1}$, contributing to $\fm_k(b,\ldots,b)$ have exactly one interior vertex, which are decorated by a Maslov index $2$ disc class $\beta_i+\alpha$. It remains to consider contributions from the moduli spaces of the form $\cM_{k+1}(\beta_i+\alpha;L;X_{\mu_1},\ldots,X_{\mu_k} 	)$, $(\mu_1,\ldots,\mu_k)\in \{1,\ldots,d\}^k$.	
	
	We note that $\cM_{k+1}(\beta_i+\alpha;L;X_{\mu_1},\ldots,X_{\mu_k})^{\fs}=\emptyset$ for generic perturbations $\fs$ unless $\partial \beta_i$ intersects all the input hypertori $X_\mu$. Therefore, it suffices to assume that this is the case. Let us denote by $r_j$ the number of times $X_j$ appears in the inputs, and by $m_j$ is the intersection multiplicity of $\partial\beta_i\pitchfork X_j$. In order to obtain the expression \eqref{eq:W_disc}, we choose the following Kuranishi structure and perturbation for  $\cM_{k+1}(\beta_i+\alpha;L;X_{\mu_1},\ldots,X_{\mu_k})$.
	
	Let $\fs=\{\fs_p\}$ be the multisection chosen for $\cM_1(\beta_i+\alpha;L)$, which gives rise to the open Gromov-Witten invariant $n_1(\beta_i+\alpha)$. (We note that $n_1(\beta_i+\alpha)$ is in fact independent of the choice of a transversal multisection due to the absence of disc bubbles.) For $k\ge 1$, let $\forget \colon \cM_{k+1}(\beta_i+\alpha;L)\to\cM_1(\beta_i+\alpha;L)$ be the forgetful map which forgets the $k$ input marked points and stabilizes. To relate the fiber product $\cM_{k+1}(\beta_i+\alpha;L;X_{\mu_1},\ldots,X_{\mu_k})$ with $n_1(\beta_i+\alpha)$, we first pullback the Kuranishi structure and multisection from $\cM_1(\beta_i+\alpha;L)$ to $\cM_{k+1}(\beta_i+\alpha;L)$ via $\forget$. Let $p^+=[\Sigma,\vec{z},u]\in \cM_{k+1}(\beta_i+\alpha;L)$ such that $\forget(p^+)=p\in\cM_1(\beta_i+\alpha;L)$. Let $V^+_{p^+}$, $E^+_{p^+}$, and $\fs_{p^+}^+:V^+_{p^+}\to (E^+_{p^+})^{\ell}/S_{\ell}$ be pullback of the Kuranishi neighborhood, obstruction bundle, and multisection at $p$ via $\forget$ (see \cite[Lemma 7.3.8]{FOOO} for the precise definitions). To make the Kuranishi structure on $\cM_{k+1}(\beta_i+\alpha;L)$ weakly submersive, we enlarge Kuranishi neighborhood $V_{p^+}$ and the obstruction bundle $E_{p^+}$ at $p^+$ to be of the form
	\begin{enumerate}
	    \item $E_{p^+}=E^+_{p^+}\bigoplus_{i=1,\ldots,k} T_{u(z_i)}L$.
		
    \item $V_{p^+}=V^+_{p^+}\times U_{p^+}$, where $U_{p^+}\subset \bigoplus_{i=1,\ldots,k} T_{u(z_i)}$ is an open subset containing $0$.
	\end{enumerate}
Then, $\fs^+_{p^+}$ naturally induces a multisection $\fs'_{p^+}: V_{p^+}\to (E_{p^+})^{\ell}/S_{\ell}$ which is characterized as follows. Let $\widetilde{\fs_{p^+}} \colon V_{p^+}\to (E_{p^+})^{\ell}$ be the lift of $\fs_{p^+}$ and write
\[
\widetilde{\fs_{p^+}}=\left((s^1_{p^+,1},s^{2,1}_{p^+},\ldots,s^{2,k}_{p^+}),\ldots,(s^1_{p^+,\ell},s^{2,1}_{p^+},\ldots,s^{2,k}_{p^+}) \right).
\]
Here, the restriction of $(s^1_{p^+,1},\ldots,s^1_{p^+,\ell}):V_{p^+}\to (E^+_{p^+})^{\ell}$ to $V^+_{p^+}\times\{0\}$ agrees with the lift of $\fs_{p^+}^+$ and is constant along the $U_{p^+}$-direction; $(s^{2,1}_{p^+},\ldots,s^{2,k}_{p^+}):V_{p^+}\to \bigoplus_{i=1,\ldots,k} T_{u(z_i)}$ is the tautological section given by the embedding $U_{p^+}\subset \bigoplus_{i=1,\ldots,k} T_{u(z_i)}$.

Now, let $u_1,\ldots,u_k$ be distinct, generic, small vectors in the direction of $\partial \beta_i$. (Since $TL$ is trivial, we can view $u_1,\ldots,u_k$ as constant vector fields on $L$.)
We define a new multisection $\fs'=\{\fs'_{p^+}\}$ by 
\[
\widetilde{\fs'_{p^+}} \colon V_{p^+}\to E_{p^+}^{\ell\cdot k!},
\] 
\[
\widetilde{\fs'_{p^+}}=\left((s^1_{p^+,1},s^{2,1}_{p^+}+u_{\sigma(1)},\ldots,s^{2,k}_{p^+}+u_{\sigma(k)}),\ldots,(s^1_{p^+,\ell},s^{2,1}_{p^+}+u_{\sigma(1)},\ldots,s^{2,k}_{p^+}+u_{\sigma(k)})\right)_{\sigma \in S_k},
\]
and $\fs'_{p^+}=\widetilde{\fs'_{p^+}}/S_{\ell\cdot k!}$. Then, $\fs'$ induces a transversal multisection on the fiber product $\cM_{k+1}(\beta_i+\alpha;L;X_{\mu_1},\ldots,X_{\mu_k})$ whose zero locus is given by 
\[
\cM_{k+1}(\beta_i+\alpha;L)^{\fs'}\times_{L^k} \prod_{\nu=1,\ldots k} X_{\mu_\nu}=\coprod_{1,\ldots,\prod_{j=1}^d m_j^{r_j}} \fs^{-1}(0)
\]
weighted by $1/k!$. Its intersection with $\one_L$ gives the coefficient
	$\frac{n_1(\beta_i+\alpha)\prod_{j=1}^d m_j^{r_j}}{k!}$. (See Figure \ref{fig_hypertori} for an illustration of such perturbations.) Summing over all the possible inputs of $k$ hypertori and the $\frac{k!}{\prod_{j=1}^d (r_j!)}$ arrangements among them, we obtain the term $n_1(\beta_i+\alpha) \exp (v_i \cdot (x_1,\ldots,x_d))$.
\end{proof}	

\begin{figure}[h]
	\begin{center}
		\includegraphics[scale=0.65]{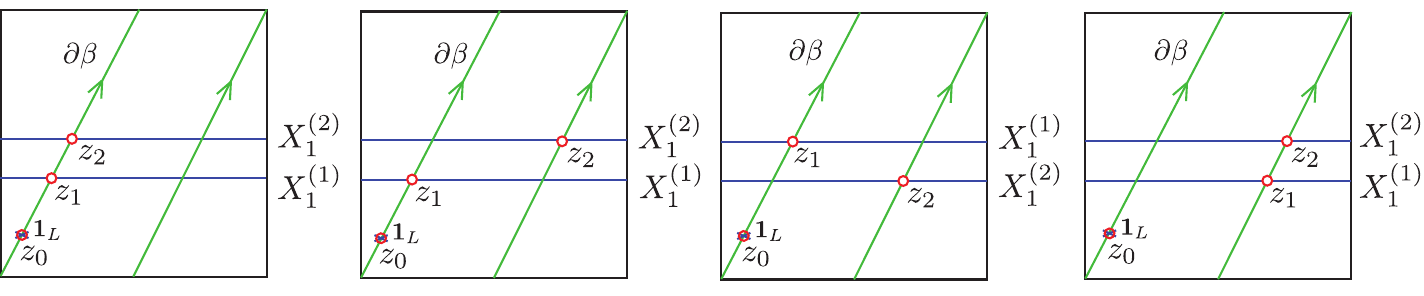}
		\caption{All possible configurations contributing to $\fm_{2,\beta}(X_1,X_1)$ in the case $\partial \beta\cdot X_1=2$, after requiring $z_0$ to intersect $\one_L$. Here $X^{(i)}_1$ denotes the composition of $X_1$ with the time-$1$ flow of $v_i$.}
		\label{fig_hypertori}
	\end{center}
\end{figure}

	\begin{remark}
		In the above theorem, we have chosen the perturbations for the fiber products to be the average of all configurations of the perturbed unstable hypertori of $X_i$.  Indeed, we can choose other perturbations, which would give different expressions of the (non-equivariant) disc potential.
		
		Take $\bP^1$ as an example.  Let $L$ be the equator.  Let us perturb $X_1$ in the counterclockwise order with respect to the left hemisphere.  Then the non-equivariant disc potential will read
		\[
		 W^L = \bT^{A/2} \left( \frac{1}{1-x_1} +(1-x_1) \right)
		\]
		instead of the well-known expression $\bT^{A/2} \left( e^{x_1} + e^{-x_1} \right)$.
	\end{remark}

	\begin{figure}[h]
	\begin{center}
		\includegraphics[scale=0.37]{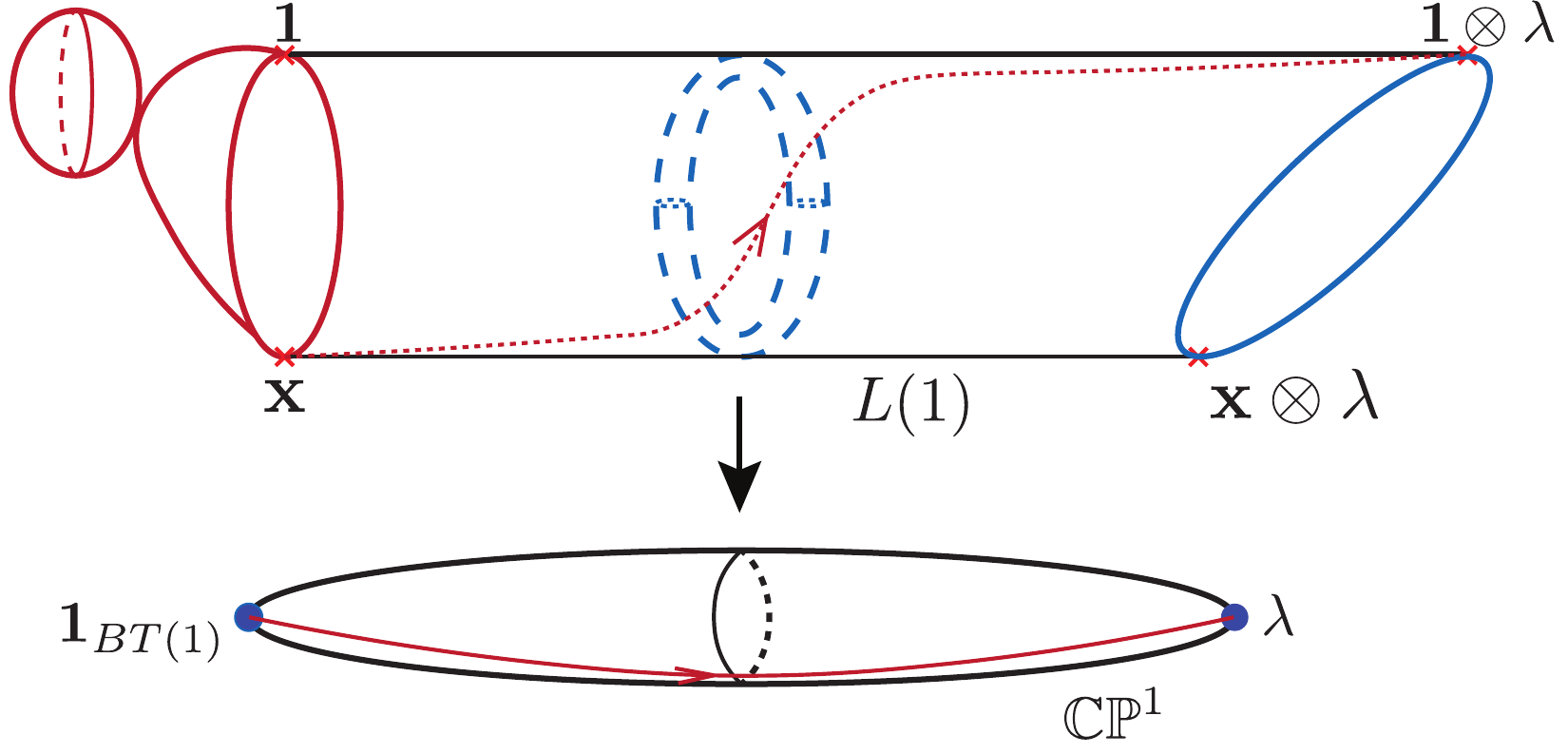}
		\caption{The geometric inputs of $W^{L}_T$.  The non-constant discs (with sphere bubbles) contribute to $\bunit$, and the flow line contributes to $\blambda$.}
		\label{fig_L_T}
	\end{center}
	\end{figure}	

	In the Fano case, $n_1(\beta_i+\alpha)=0$ whenever $\alpha \not=0$ by dimension reason.  Moreover $n_1(\beta_i)=1$ \cite{CO}.  More generally, the non-equivariant disc potential for compact semi-Fano toric manifold has been computed by \cite{CLLT} using Seidel representations.  The coefficients are given by the inverse mirror map.  We recall it in the following theorem.
	
	\begin{theorem} [\cite{CLLT}] \label{thm:inverse mirror map}
		Let $X$ be a compact semi-Fano toric manifold. Let $v_1,\ldots,v_m$ be the primitive generators of the one dimensional cones of the fan defining $X$. We denote by $D_i$ the toric prime divisor corresponding to $v_i$. Then
		\[
		\sum_{\alpha: c_1(\alpha)=0} n_1(\beta_i + \alpha) \bT^{\omega_X(\alpha)} = \exp(g_i(\check{q}(q))),
		\]
		where
		\begin{equation}\label{eqn:funcn_g}
		g_i(\check{q}):=\sum_{C}\frac{(-1)^{(D_i\cdot C)}(-(D_i\cdot C)-1)!}{\prod_{j\neq i} (D_j\cdot C)!}\check{q}^{C}, \quad \check{q}^{C}:=\bT^{\omega_X(C)} ,
		\end{equation}
		and the summation is over all effective curve classes $C\in H_2^{\mathrm{eff}}(X)$ satisfying
		\[
		-K_X\cdot C=0, D_i\cdot C<0 \text{ and } D_j\cdot C \geq 0 \text{ for all } j\neq i
		\]
		and $\check{q}=\check{q}(q)$ is the inverse of the mirror map $q=q(\check{q})$.
	\end{theorem}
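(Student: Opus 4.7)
The statement is due to Chan--Lau--Leung--Tseng, so the natural plan is to recover their argument via Seidel representations. The strategy is to express the same quantum cohomology class in two different ways and compare coefficients.

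First, for each primitive generator $v_i$, I would introduce the Seidel element $S_i \in QH^{\bullet}(X)$ attached to the Hamiltonian $\bS^1$-action whose moment map pairs with $v_i$. By definition $S_i$ is obtained by counting pseudo-holomorphic sections of the Hamiltonian $X$-bundle over $\bP^1$ associated to this circle action. A degeneration of $\bP^1$ into two discs meeting at a node relates $S_i$ to open Gromov--Witten invariants of a toric fiber $L \subset X$: each half becomes a holomorphic disc with boundary on $L$. The ``top half'' must realize the basic disc class $\beta_i$ (the unique basic class whose moment map attains its maximum on $D_i$), while the ``bottom half'' contributes Maslov-zero corrections. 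Summing over all such stable configurations yields an expression of the form
\[
S_i = \bT^{\omega_X(\beta_i)} \left( \sum_{\alpha\,:\,c_1(\alpha)=0} n_1(\beta_i+\alpha) \bT^{\omega_X(\alpha)} \right) D_i + \textrm{lower order terms}.
\]

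Second, I would compute the very same $S_i$ closed-stringly via the small $J$-function of $X$. For a semi-Fano toric manifold, the mirror theorems of Givental and of Lian--Liu--Yau identify the $J$-function with an explicit hypergeometric $I$-function after the mirror change of variables $q \mapsto \check{q}(q)$. Extracting the $D_i$-component of the closed-string expression and applying the residue identity that produces the coefficients $\frac{(-1)^{(D_i\cdot C)}(-(D_i\cdot C)-1)!}{\prod_{j\neq i}(D_j \cdot C)!}$ yields precisely $\exp(g_i(\check{q}))$; only curve classes $C$ satisfying $-K_X \cdot C = 0$, $D_i \cdot C < 0$, and $D_j \cdot C \geq 0$ for $j \neq i$ survive, since other terms vanish by degree or produce zero residues.

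Matching the two computations of $S_i$ yields the theorem. The hardest step will be the first one: carrying out a rigorous open--closed degeneration for Seidel sections requires a careful analysis of the virtual fundamental class and elimination of spurious bubbling at the poles of $\bP^1$, which is delicate in the semi-Fano (non-Fano) regime where Maslov-zero discs genuinely exist. Once this identification is secured, the remaining verification is combinatorial bookkeeping on the toric $I$-function and its inverse mirror map.
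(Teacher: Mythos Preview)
The paper does not prove this theorem at all: it is stated with attribution to \cite{CLLT} and no proof is given, only the remark preceding it that ``the non-equivariant disc potential for compact semi-Fano toric manifold has been computed by \cite{CLLT} using Seidel representations.'' Your outline via Seidel elements and the open--closed degeneration is precisely the strategy of \cite{CLLT} that the paper is quoting, so there is nothing further to compare.
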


	A similar result also holds for toric semi-Fano Gorenstein orbifolds \cite{CCLLTtoorbit}.  However we stick with the manifold case for simplicity.  Moreover, compactness of $X$ can be replaced by requiring $X$ to be semi-projective, so that the disc moduli spaces are compact.  A toric manifold $X$ is said to be semi-projective if it has a torus-fixed point, and the natural map $X\to \Spec{H^0(X,\mathcal{O}_X)}$ is projective \cite[Section 7.2]{CLS_toricbook}. By combining Theorem \ref{thm:superpotential} and \ref{thm:inverse mirror map}, we get the following:
	
	\begin{corollary} \label{cor:tor}
		For a toric fiber of a semi-projective and semi-Fano toric manifold, the equivariant disc potential equals to
		\[
		 W_T^L = \sum_{i=1}^m  \exp(g_i(\check{q}(q))) \bT^{\omega_X({\beta_i})} \exp (v_i \cdot (x_1,\ldots,x_d))+\sum_{i=1}^d \lambda_i x_i,
		\]
		where $g_i(\check{q}(q))$ is given by the inverse mirror map in \eqref{eqn:funcn_g}.
	\end{corollary}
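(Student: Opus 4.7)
The plan is to combine Theorem \ref{thm:superpotential} with Theorem \ref{thm:inverse mirror map} term by term. Theorem \ref{thm:superpotential} expresses the equivariant disc potential in the form
\[
W^L_T(x_1,\ldots,x_d) = \sum_{i=1}^m \left(\sum_{\alpha\colon c_1(\alpha)=0} n_1(\beta_i + \alpha)\, \bT^{\omega_X(\alpha)}\right) \bT^{\omega_X(\beta_i)} \exp(v_i \cdot (x_1,\ldots,x_d)) + \sum_{i=1}^d \lambda_i x_i,
\]
and Theorem \ref{thm:inverse mirror map} identifies each inner sum $\sum_{\alpha\colon c_1(\alpha)=0} n_1(\beta_i + \alpha)\, \bT^{\omega_X(\alpha)}$ with $\exp(g_i(\check{q}(q)))$. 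So the proof amounts to verifying that both theorems apply in the semi-projective semi-Fano (rather than strictly compact semi-Fano) setting, and then performing the substitution.

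Concretely, I would first check that the hypotheses of Theorem \ref{thm:superpotential} extend: compactness of $X$ was used only to ensure that the relevant disc moduli spaces $\mathcal{M}_{k+1}(\beta;L)$ are compact so that the open Gromov--Witten invariants $n_1(\beta_i + \alpha)$ are well-defined. Since $X$ is semi-projective, the natural map $X \to \Spec H^0(X,\mathcal{O}_X)$ is projective, which suffices for Gromov compactness of stable disc moduli in each class $\beta_i + \alpha$ with fixed energy. Moreover the semi-Fano condition guarantees that $L$ has minimal Maslov index $2$, so the proof of Lemma \ref{lem:h_i} (equivariant part) and Lemma \ref{lem:toric_m_0} (purely non-equivariant part $\mathfrak{m}_0^b(1) \in \Lambda_0 \cdot \mathbf{1}_L$) go through unchanged. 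Together these yield
\[
\mathfrak{m}_0^{T,b}(1) = W^L(x_1,\ldots,x_d)\,\mathbf{1} + \sum_{i=1}^d x_i\,\boldsymbol{\lambda}_i,
\]
with $W^L$ of the form in \eqref{eq:W_disc}.

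Next, I would invoke Theorem \ref{thm:inverse mirror map} (which holds for compact semi-Fano toric manifolds and whose extension to semi-projective semi-Fano is essentially formal, since both sides are defined by the same Seidel-representation/mirror-map argument of \cite{CLLT}, and only the classes $C$ with $-K_X \cdot C = 0$ and bounded intersections with $D_j$ contribute to each coefficient $g_i$). Substituting the identity $\sum_{\alpha\colon c_1(\alpha)=0} n_1(\beta_i + \alpha)\, \bT^{\omega_X(\alpha)} = \exp(g_i(\check{q}(q)))$ into the expression from Theorem \ref{thm:superpotential} gives the desired formula for $W^L_T$.

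The main obstacle, such as it is, is verifying the Maurer--Cartan packaging in the semi-projective case: one must ensure that $\mathfrak{m}_0^{T,b}(1)$ lies in $\Lambda_0 \cdot \mathbf{1} + \sum_i \Lambda_0 \cdot \boldsymbol{\lambda}_i$ so that Lemma \ref{lem:G_unobstr} applies and produces $b^\dagger$ realizing $W^L_T$ as the equivariant disc potential in the sense of Definition \ref{def:G_disc_potential}. Since $L$ has minimal Maslov index $2$ under the semi-Fano hypothesis, Lemma \ref{lem:G_unobstr} gives $W^{\triangledown}(b) = W(b)$ and $\phi_\lambda^{\triangledown}(b) = \phi_\lambda(b)$, so no non-trivial denominators appear and the formula for $W^L_T$ is exactly as stated, completing the proof.
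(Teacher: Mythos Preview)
Your proposal is correct and follows exactly the paper's approach: the corollary is obtained by combining Theorem \ref{thm:superpotential} (which gives $W^L_T$ in terms of the $n_1(\beta_i+\alpha)$) with Theorem \ref{thm:inverse mirror map} (which identifies $\sum_\alpha n_1(\beta_i+\alpha)\bT^{\omega_X(\alpha)}$ with $\exp(g_i(\check{q}(q)))$), together with the observation that compactness may be replaced by semi-projectivity so that the disc moduli remain compact. Your additional remarks about Lemma \ref{lem:G_unobstr} and the minimal Maslov index two condition are accurate elaborations of what the paper leaves implicit.
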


	

\section{$\bS^1$-equivariant disc potential for the immersed two-sphere} \label{sec:imm}

In this section, we study the $\bS^1$-equivariant Floer theory of an immersed Lagrangian two-sphere $L_0$ which has a single nodal point. This will serve as a prototypical example for the authors' subsequent work \cite{HKLZ19} with Hong on the $\bS^1$-equivariant Floer theory of immersed fibers of Lagrangian torus fibrations in toric Calabi-Yau manifolds.


	\subsection{Equivariant Floer theory for Lagrangian immersions}
	\label{sec:Morse-Bott}
	In this section we describe a generalization of the equivariant Morse model to immersed Lagrangians with clean self-intersections. 
	
	In \cite{AJ}, Akaho and Joyce developed Lagrangian Floer theory for immersed Lagrangians $L$ with transverse self-intersection (which is a nodal point), generalizing the work of Fukaya-Oh-Ohta-Ono \cite{FOOO} in the embedded case.
	Their construction can be further generalized to the case where $L$ is an immersed Lagrangian with clean self-intersection as we describe below. See \cite{Sch-clean, CW15, Fuk_gauge} for the development of Floer theory of clean intersections in different settings. 
	
Let $(X,\omega_X)$ be as in Section \ref{sec:simplicial} and let $L$ be a closed, connected, relatively spin, immersed Lagrangian submanifold with clean self-intersections. We denote by $\iota:\tilde{L}\to X$ an immersion with image $L$ and by $\cI$ the self-intersection. As in Akaho-Joyce \cite{AJ}, the inverse image of $\cI$ under the immersion $\iota$ is assumed to be the disjoint union $\cI^-\coprod \cI^+\subset \tilde{L}$ of \emph{two branches}, each of which is diffeomorphic to $\cI$.  Then the following fiber product
\[
\tilde{L} \times_L \tilde{L}=\coprod_{i=-1,0,1} R_i,
\]
consists of the diagonal component $R_0$, and the two \emph{immersed sectors}
	\[
	\begin{split}
	R_{1}=\{(p_{-}, p_+)\in \tilde{L} \times \tilde{L}  \mid p_-\in\cI^-, p_+\in\cI^+, \iota(p_-)=\iota(p_+)  \},\\
	R_{-1}=\{(p_{+}, p_{-})\in \tilde{L} \times \tilde{L} \mid p_+\in\cI^+, p_-\in\cI^-, \iota(p_+)=\iota(p_-) \},
	\end{split}
	\]
	 We have canonical isomorphisms $R_0\cong\tilde{L}$ and $R_{-1}\cong R_{1}\cong \cI$. We also have the involution $\sigma \colon R_{-1}\coprod R_{1} \to R_{-1}\coprod R_{1}$ swapping the two immersed sectors, i.e, $\sigma(p_{-},p_{+})=(p_{+},p_{-})$.
	 
	 	Let $J_X$ be a compatible almost complex structure of $(X,\omega_X)$. For a map $\alpha \colon \{0,\ldots, k\}\to \{-1,0,1\}$, we consider quintuples $(\Sigma,\vec{z},u,\tilde{u},l)$ where
	 \begin{itemize}
	 	\item $\Sigma$ is a prestable genus $0$ bordered Riemann surface,
	 	\item $\vec{z}=(z_0,\ldots,z_k)$ are distinct counter-clockwise ordered smooth points on $\partial\Sigma$, 
	 	\item $u \colon (\Sigma,\partial \Sigma)\to (X,L)$ is a $J$-holomorphic map with $(\Sigma,\vec{z},u)$ stable,
	 	\item $\tilde{u} \colon \bS^1 \setminus \{\zeta_i:=l^{-1}(z_i):\alpha(i)\ne 0\} \to \tilde{L}$ is a local lift of $u|_{\partial\Sigma}$, i.e., 
	 	\[
	 	\iota\circ \tilde{u}=u\circ l \mbox{ and } \left(\lim_{\theta\to 0^-}\tilde{u}(e^{\mathbf{i}\theta} \zeta_i),\lim_{\theta \to 0^+}\tilde{u}(e^{\mathbf{i}\theta} \zeta_i)\right)\in R_{\alpha(i)},
	 	\]
	 	where $\alpha(i)\ne 0$ and $\mathbf{i}:=\sqrt{-1}$. 
	 	\item $l \colon \bS^1 \to \partial \Sigma$ is an orientation preserving continuous map (unique up to a reparametrization) characterized by that the inverse image of a smooth point is a point and the inverse image of a singular point consists of two points. 
	 \end{itemize}
	 Let $[\Sigma,\vec{z},u,\tilde{u},l]$ be the equivalence class of $(\Sigma,\vec{z},u,\tilde{u},l)$ given by the automorphisms. 
	 For $\beta\in H_2(X,L;\Z)$ and $\alpha$ a map as described above, we denote by $\cM_{k+1}(\alpha,\beta)$ the moduli space of (equivalence classes of) such quintuples $[\Sigma,\vec{z},u,\tilde{u},l]$ satisfying $u_*([\Sigma])=\beta$. The moduli spaces come with the following evaluation maps $\ev_i \colon \cM_{k+1}(\alpha,\beta) \to \tilde{L} \times_L \tilde{L}$ defined by
	 \[
	 \ev_i([\Sigma,\vec{z},u,\tilde{u},l])=\begin{cases} \tilde{u}(z_i)\in R_0 & \alpha(i)=0 \\
	 \left(\lim_{\theta\to 0^-}\tilde{u}(e^{\mathbf{i}\theta} \zeta_i),\lim_{\theta \to 0^+}\tilde{u}(e^{\mathbf{i}\theta} \zeta_i)\right)\in R_{\alpha(i)} & \alpha(i)\ne 0,
	 \end{cases}
	 \]
	 at the input marked points $i=1,\ldots, k$, and 
	 \[
	 \ev_0([\Sigma,\vec{z},u,\tilde{u},l])=\begin{cases} \tilde{u}(z_0)\in R_0 & \alpha(0)=0 \\
	 \sigma\left(\lim_{\theta\to 0^-}\tilde{u}(e^{\mathbf{i}\theta} \zeta_0),\lim_{\theta \to 0^+}\tilde{u}(e^{\mathbf{i}\theta} \zeta_0)\right)\in R_{-\alpha(0)} & \alpha(0)\ne 0,
	 \end{cases}
	 \]
	 at the output marked point. 
	 
	 For the convenience of writing, we will call an element of $\cM_{k+1}(\alpha,\beta)$ a \emph{stable polygon} if $\alpha(i)\ne 0$ for some $i\in\{0,\ldots,k\}$. In this case, the \emph{corners} of a polygon are the boundary marked points $z_i$ with $\alpha(i)\ne 0$. If $\alpha(i)=0$ for all $i$, we will simply refer to an element of $\cM_{k+1}(\alpha,\beta)$ as a \emph{stable disc}. Note that $\alpha$ dictates the branch jumps at the boundary marked points.

	The Kuranishi structures on $\cM_{k+1}(\alpha,\beta;L)$ are chosen to be weakly submersive. 
For $\vec{P}=(P_1,\ldots, P_k), P_1,\ldots,P_k\in S^{\bullet}(\tilde{L}\times_{L}\tilde{L};\Q)$, we denote by $\cM_{k+1}(\alpha,\beta,\vec{P})$ the fiber product 
	\[
	\cM_{k+1}(\alpha,\beta,\vec{P})=\cM_{k+1}(\alpha,\beta,L) \times_{(\tilde{L} \times_L \tilde{L})^k} \vec{P}
	\]
	in the sense of Kuranishi structures. We can then construct an $A_{\infty}$-structure $\tilde{\fm}$ on a countably generated subcomplex $C^{\bullet}(L;\Lambda_0)\subset S^{\bullet}(\tilde{L}\times_{L}\tilde{L};\Lambda_0)$ as in Section \ref{sec:simplicial}. 
    We define the maps $\tilde{\fm}_{k,\beta} \colon C^{\bullet}(L;\Lambda_0)^{\otimes k}\to C^{\bullet}(L;\Lambda_0)\subset S^{\bullet}(\tilde{L}\times_{L}\tilde{L};\Lambda_0)$ by
	\[
	\tilde{\fm}_{1,\beta_0}(P)=(-1)^n \partial P,
	\]
	and by
	\begin{equation*}
	\tilde{\fm}_{k,(\alpha,\beta)}(P_1,\ldots,P_k)=(\ev_0)_*\left(\mathcal{M}_{k+1}(\alpha,\beta;\vec{P})^{\fs}\right),
	\end{equation*}
	\begin{equation*}
	\tilde{\fm}_{k,\beta}(P_1,\ldots,P_k)=\sum_{\alpha } \tilde{\fm}_{k,(\alpha,\beta)}(P_1,\ldots,P_k)
	\end{equation*}
	for $(k,\beta)\ne (1,\beta_0)$. Notice that $\fm_{k,(\alpha,\beta)}(P_1,\ldots,P_k)=\emptyset$ unless $P_i$ is a singular chain on $R_{\alpha(i)}$. The $A_{\infty}$-structure map $\tilde{\fm}_k$ is given by
	\begin{equation*}
	\tilde{\fm}_k(P_1,\ldots,P_k)=\sum_{\beta} \bT^{\omega_X(\beta)}\tilde{\fm}_{k,\beta}(P_1,\ldots,P_k).
	\end{equation*}
	
	Similarly, we have a unital $A_{\infty}$-algebra $(C^{\bullet}(L;\Lambda_0)^+, \tilde{\fm}^+)$ that is homotopy equivalent to $(C^{\bullet}(L;\Lambda_0), \tilde{\fm})$ obtained via the homotopy unit construction.
	
	\begin{remark}
	We choose for each connected component $\cI_i\subset \cI$, a base point $p_i$, and a path of Lagrangian subspaces in $T_{p_i}X$ connecting $d\iota (T_{p_{i,-}}L)$ and $d\iota (T_{p_{i,+}}L)$, $\iota(p_{i,-})$=$\iota(p_{i,+})=p_i$. Then, the dimensions of the moduli spaces and the grading of singular chains on the immersed sectors depend on the choices of paths in the same manner as in \cite{AJ}. On the other hand, for our application in Section \ref{subsec_compubs1}, the immersed Lagrangian will be graded by a holomorphic volume form and these will be determined uniquely once paths of graded Lagrangian subspaces are chosen.  
	\end{remark}

Let $f \colon \tilde{L}\times_L \tilde{L}\to \R$ be a Morse function with a unique maximum point $\bunit$ on the diagonal component $\tilde{L}$. Let $C^{\bullet}(f;\Lambda_0)$ be the cochain complex generated by the critical points of $f$
\[
C^{\bullet}(f;\Lambda_0)=\bigoplus_{p\in \Crit(f)} \Lambda_0 \cdot p,
\]
and set
\[
CF^{\bullet}(L;\Lambda_0)=C^{\bullet}(f;\Lambda_0)\oplus \Lambda_0 \cdot \wunit \oplus \Lambda_0 \cdot \gunit. 
\]
Let
\[
\mathcal{X}_{-1}(L)=\{\Delta_p \mid p\in\Crit(f)\},
\]
where $\Delta_p$ is defined by \eqref{eq:delta_p}. Let $C^{\bullet}_{(-1)}(L;\Lambda_0)\subset S^{\bullet}(\tilde{L}\times_{L}\tilde{L};\Lambda_0)$ be the subcomplex generated elements in $\mathcal{X}_{-1}(L)$, and set
\[
C^{\bullet}_{(-1)}(L;\Lambda_0)^+=C^{\bullet}_{(-1)}(L;\Lambda_0)\oplus \Lambda_0  \cdot \bm{e}^+ \oplus \Lambda_0 \cdot \bm{f}. 
\]
We again identify $CF^{\bullet}(L;\Lambda_0)$ with $C^{\bullet}_{(-1)}(L;\Lambda_0)^+$ via the assignment $p\mapsto \Delta_p$ and equip $CF^{\bullet}(L;\Lambda_0)$ with a unital $A_{\infty}$-structure $\fm$ via homological peturbation as in Section \ref{sec:Morse model}. 

The $A_{\infty}$-structure map $\fm_k$ counts stable pearly trees with interior vertices decorated by stable polygons $(\alpha,\beta)$. Each edge of a tree corresponds to a negative gradient flow line of the Morse function $f$ (in a connected component of $\tilde{L}\times_{L}\tilde{L}$). A flow line connected to the $i^{\mathrm{th}}$ input marked point of a polygon $(\alpha,\beta)$ must be contained $R_{\alpha(i)}$, for otherwise the pearly tree does not exist. See Figure \ref{fig_comparison_1} for some (non-) examples.

For a pair $(L_1,L_2)$ of closed, connected, relatively spin, embedded Lagrangian submanifolds intersecting cleanly, the union $L=L_1\cup L_2$ is an immersed Lagrangian with clean self-intersections with $\tilde{L}=L_1\coprod L_2$. We choose the splitting $\iota^{-1}(\cI)=\cI^-\coprod \cI^+$ so that $\cI^-\subset L_1$ and $\cI^+\subset L_2$.

We can define a pearl complex $(CF^{\bullet}(L_1,L_2;\Lambda_0),\fm_1^{L_1,L_2})$ for the Lagrangian intersection Floer theory of $(L_1,L_2)$ as follows: 
\[
CF^{\bullet}(L_1,L_2;\Lambda_0)=\bigoplus_{p\in \Crit(f|_{R_{1}}) } \Lambda_0 \cdot p
\]
is the subcomplex of $CF^{\bullet}(L;\Lambda_0)$ generated by critical points of $f$ in $R_1$, and $\fm_1^{L_1,L_2}$ counts stable pearly trees $\Gamma\in\bm{\Gamma}_2$ with input and output vertices in $R_1$.




For the equivariant Morse model, we consider a $G$-equivariant Lagrangian immersion $\iota \colon \tilde{L} \to X$ with image $L=\iota(\tilde{L})\subset X$ a $G$-invariant immersed Lagrangian $L$.  
We can choose an admissible collection of Morse-Smale pairs $\{(f_N,\mathscr{V}_N)\}_{N\in \bN}$ on the finite dimensional approximations of $\tilde{L}_G \times_{L_G} \tilde{L}_G$ and proceed to define $A_{\infty}$-algebras $(CF^{\bullet}_G(L;\Lambda_0),\fm^{G})$ and $(CF^{\bullet}_G(L;\Lambda_0)^{\dagger},\fm^{G,\dagger})$ following Section \ref{sec:Morse model} and \ref{sec:partial_units}. This gives rise to interesting equivariant disc potentials even in the case when $L$ is unobstructed as we will see in Section~\ref{subsec_compubs1}.

	\begin{figure}[h]
	\begin{center}
		\includegraphics[scale=0.7]{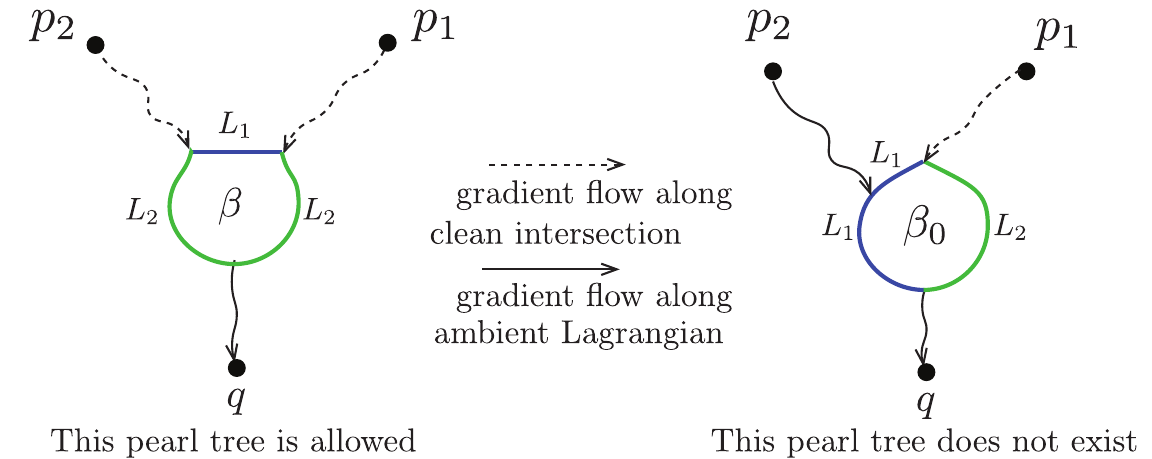}
		\caption{$L$ can be seen locally as a union of two cleanly intersecting Lagrangians $L_1$ and $L_2$. The pearly tree shown on the right does not exist due to inconsistency of Lagrangian boundary labels.}
		\label{fig_comparison_1}
	\end{center}
	\end{figure}

\subsection{$\bS^1$-equivariant disc potential via the gluing method}\label{subsec_compubs1}
First, we make the following observation which is important for finding isomorphisms between objects in the Fukaya category.

\begin{lemma} \label{lemma:b}
	Let $L_1,L_2$ be two graded Lagrangians in the Fukaya category that cleanly intersect with each other, and $\alpha \in CF^0(L_1,L_2)$.  Suppose that $CF^j(L_1,L_2)=0$ for all $j<0$.  For any $b'_i \in CF^1(L_i,\Lambda_+)$ and $w_i \in \Lambda_0$, we put $b_i=b'_i+w_i\cdot \gunit_{L_i}$. Then ,
	\begin{equation} \label{eq:m_1_b'}
	\mathfrak{m}^{b_1', b_2'}_1(\alpha) = \mathfrak{m}^{b_1, b_2}_1(\alpha).
	\end{equation}
	
	Similarly, if further $CF^j(L_2,L_1)=0$ for all $j<0$ and $\beta \in CF^0(L_2,L_1)$, then 
	\begin{align} \label{eq:m_2_b'}
	\mathfrak{m}^{b_1',b_2',b_1'}_2(\beta,\alpha) &= \mathfrak{m}^{b_1, b_2,b_1}_2(\beta,\alpha), \nonumber \\ 
	\mathfrak{m}^{b_2',b_1',b_2'}_2(\alpha,\beta) &= \mathfrak{m}^{b_2, b_1,b_2}_2(\alpha,\beta).
	\end{align}
\end{lemma}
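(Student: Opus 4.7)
The strategy is to expand $e^{b_i} = e^{b_i' + w_i \gunit_{L_i}}$ as a formal sum of tensor words in the letters $b_i'$ and $w_i \gunit_{L_i}$, so that the difference $\mathfrak{m}_1^{b_1, b_2}(\alpha) - \mathfrak{m}_1^{b_1', b_2'}(\alpha)$ (and likewise its $\mathfrak{m}_2$ counterparts) becomes a sum of $\mathfrak{m}_k$-terms on inputs containing at least one copy of some $\gunit_{L_j}$. I will show that every such term vanishes purely by a degree count combined with the degree-concentration of the relevant Floer complexes.

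For \eqref{eq:m_1_b'}, a contributing term has the shape
\[
\mathfrak{m}_k(x_1, \dots, x_{n_1}, \alpha, x_{n_1+1}, \dots, x_{n_1+n_2}),
\]
with $x_i \in \{b_1', \gunit_{L_1}\}$ for $i \le n_1$ and $x_i \in \{b_2', \gunit_{L_2}\}$ for $i > n_1$, so $k = n_1 + n_2 + 1$. Since $|\alpha| = 0$, $|b_j'| = 1$, $|\gunit_{L_j}| = -1$, and $\mathfrak{m}_k$ has cohomological degree $2 - k$, the output sits in degree $1 - 2 n_g$, where $n_g \ge 1$ is the total number of $\gunit$-insertions in this term. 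Hence the output lies in $CF^{\le -1}(L_1, L_2)$, which is zero by hypothesis.

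For \eqref{eq:m_2_b'}, the analogous expansion of $\mathfrak{m}_2^{b_1, b_2, b_1}(\beta, \alpha) - \mathfrak{m}_2^{b_1', b_2', b_1'}(\beta, \alpha)$ produces terms whose outputs lie in $CF^{*}(L_1, L_1)$; an identical computation gives output degree $-2 n_g$ with $n_g \ge 1$. Here I will invoke the structure of the unital Morse model from Section~\ref{sec:Morse model}: the complex $CF^{\bullet}(L_1; \Lambda_0)$ is generated by Morse critical points of $f$ (in degrees $0, \dots, \dim L_1$), the strict unit $\wunit$ (in degree $0$), and the homotopy $\gunit$ (in degree $-1$), so $CF^{j}(L_1; \Lambda_0) = 0$ for $j \le -2$. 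Each such term therefore vanishes, and the same argument with the roles of $L_1$ and $L_2$ swapped yields the second identity in \eqref{eq:m_2_b'}.

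The main subtle point is recognizing that the \emph{self}-Floer complex, and not only the bi-Lagrangian complex featured in the hypothesis, must vanish sufficiently far below degree zero; this is precisely why the proof of \eqref{eq:m_2_b'} depends on the particular choice of unital Morse model, in which $\gunit$ is the unique generator below degree zero. A secondary point is that allowing a coefficient $w_i \in \Lambda_0$ rather than $\Lambda_+$ is harmless: every tensor word of positive $\gunit$-count contributes zero, so the formal expansion truncates in the $\gunit$-direction and its $\bT$-energy convergence is identical to that of the pure $b_i'$-expansion.
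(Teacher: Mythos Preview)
Your proof is correct and follows essentially the same degree-counting argument as the paper: the extra terms containing at least one $\gunit$ land in negative degree in $CF^\bullet(L_1,L_2)$ (for \eqref{eq:m_1_b'}) or in degree $\le -2$ in $CF^\bullet(L_i;\Lambda_0)$ (for \eqref{eq:m_2_b'}), and therefore vanish. Your version is in fact more explicit than the paper's, both in tracking the dependence on the number $n_g$ of $\gunit$-insertions and in spelling out why $CF^{\le -2}(L_i;\Lambda_0)=0$ in the unital Morse model; your remark on convergence for $w_i\in\Lambda_0$ is also a useful addition not made explicit in the paper.
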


\begin{proof}
For \eqref{eq:m_1_b'}, since the degree of $\gunit_{L_i}$ is $(-1)$, the extra terms
\[
\mathfrak{m}^{b_1', b_2'}_1(\alpha) - \mathfrak{m}^{b_1, b_2}_1(\alpha)=\sum_{\substack{k\ge 2 \\ k_1+k_2+1=k}} \fm_{k}(\underbrace{b_1',\ldots,b_1'}_{k_1},\alpha,\underbrace{b_2',\ldots,b_2'}_{k_2})-\fm_{k}(\underbrace{b_1,\ldots,b_1}_{k_1},\alpha,\underbrace {b_2,\ldots,b_2}_{k_2})
\]	
have degrees $2-k + (-1)\cdot (k-1) = 3-2k < 0$  in $CF^{\bullet}(L_1,L_2)$ and hence must vanish. Similarly for \eqref{eq:m_2_b'}, the extra terms in 	
\[
\mathfrak{m}^{b_1',b_2',b_1'}_2(\beta,\alpha) - \mathfrak{m}^{b_1, b_2,b_1}_2(\beta,\alpha)\in CF^{\bullet}(L_1;\Lambda_0)
\]
and 
\[
\mathfrak{m}^{b_2',b_1',b_2'}_2(\alpha,\beta) - \mathfrak{m}^{b_2, b_1,b_2}_2(\alpha,\beta)\in CF^{\bullet}(L_2;\Lambda_0)
\]
have degrees at most $(-2)$ and hence vanish for degree reason.
\end{proof}

In the setting of Section \ref{sec:partial_units}, the above lemma has a direct $G$-equivariant analog, by replacing $\mathfrak{m}_k$ with $\mathfrak{m}^G_k$ and setting $w_i = w_i^0 + \sum_{\deg \lambda=2} \phi_\lambda \cdot \lambda \in H^{\bullet}_G(\pt;\Lambda_0)$, as a consequence of Proposition \ref{thm:pull-out-lambda}.  

\begin{defn}
An \textit{isomorphism} between $(L_1,b_1)$ and $(L_2,b_2)$, where $b_i$ are weak bounding cochains, consists of $\alpha \in CF^0(L_1,L_2)$ and $\beta \in CF^0(L_2,L_1)$ satisfying 
\begin{align} \label{eq:iso_eq}
&\mathfrak{m}^{b_1, b_2}_1(\alpha) = \mathfrak{m}^{b_2, b_1}_1(\beta)= 0, \nonumber\\ 
&\mathfrak{m}^{b_1, b_2,b_1}_2(\beta,\alpha) =  \wunit_{L_1} + \mathfrak{m}^{b_1}_1 (\gamma_1), \nonumber \\ 
&\mathfrak{m}^{b_2, b_1,b_2}_2(\alpha,\beta) =  \wunit_{L_2} + \mathfrak{m}^{b_2}_1 (\gamma_2),
\end{align}
for some 
$\gamma_i \in CF^{\bullet}(L_i;\Lambda_0)$, $i=1,2$.
\end{defn}
Since $\mathfrak{m}^{b_i}_1(\gunit_{L_i}) = \wunit_{L_i} -  (1-  h(b_i) )\bunit_{L_i}$ for some $h(b_i) \in \Lambda_+$, by Lemma \ref{lemma:b}, it suffices to check that 
\[
\mathfrak{m}^{b_1', b_2'}_1(\alpha)=0, \quad \mathfrak{m}^{b_2', b_1'}_1(\beta)=0, \quad  \mathfrak{m}^{b_1', b_2',b_1'}_2(\beta,\alpha) = \bunit_{L_1}, \quad  \mathfrak{m}^{b_2', b_1',b_2'}_2(\alpha,\beta) = \bunit_{L_2}.
\] 

The existence of an isomorphism between $(L_1,b_1)$ and $(L_2,b_2)$ implies that they are quasi-isomorphic as objects in the Fukaya category (see e.g. \cite[Theorem~4.2]{CHL18}). 

Similarly, an isomorphism between $(L_1,G,b_1)$ and $(L_2,G,b_2)$ consists of $\alpha\in CF^{0}_G(L_1,L_2)$ and $\beta\in CF^0_G(L_2,L_1)$ such that the obvious analog of equations \eqref{eq:iso_eq} hold.

Let us now consider the following immersed two-sphere in 
\[
X := \{ (a, b, z) \in \C^2 \times \C^\times \mid ab = 1 + z \}.
\] 
We equip $X$ with the symplectic form inherited from the standard symplectic form on $\C^3$.
Let $\Pi \colon X \to \C^\times$ be the projection to the $z$-component. 
Regarding $\mathbb{S}^1$ as the unit circle in the complex plane $\C$, the space $X$ has a fiberwise Hamiltonian $\mathbb{S}^1$-action given by
$\eta \cdot (a, b, z) \mapsto \left(\eta \cdot a, \eta^{-1} \cdot b, z\right).$
One obtains the special Lagrangian torus fibration 
\begin{equation}\label{equ_specialLagtorusfib}
(|a|^2-|b|^2, |z|)
\end{equation} with respect to the holomorphic volume form 
\[
\Omega_X = \cfrac{da \wedge db}{z}.
\]
See \cite{Harvey-Lawson,Goldstein, Gross-eg}.
In particular, a singular fiber which will be denoted by $L_0$ occurs over the point $(|a|^2-|b|^2 = 0, |z| = 1)$ is an immersed two-sphere which intersects itself transversally at the point $(0, 0, -1)$. 

Let $\iota \colon \widetilde{L_0}=\bS^2\to X$ be a Lagrangian immersion with image the immersed two-sphere $L_0$.  In this case, the self-intersection $r\in L_0$ is a single nodal point fixed by the $\mathbb{S}^1$-action, and the two branches of the immersed loci are given by $U=(p,q)$, $V=(q,p) \in \bS^2\times \bS^2$, $p\ne q$, and $\iota(p)=\iota(q)=r$.

We have finite dimensional approximations of the fiber product 
\[
(\bS^2)_{\bS^1}\times_{(L_0)_{\bS^1}} (\bS^2)_{\bS^1}\cong \C\bP^{\infty} \coprod (\bS^2)_{\bS^1} \coprod \C\bP^{\infty} .
\]
Let $f_{\infty}: (\bS^2)_{\bS^1}\times_{(L_0)_{\bS^1}} (\bS^2)_{\bS^1} \to \R$ be a Morse function such that its restriction to each of the $\C\bP^{\infty}$-components is the perfect Morse function $\varphi \colon B\bS^1=\C\bP^{\infty}\to \R$ (see \eqref{eq:CP_infty}), and the restriction to the diagonal component $(\bS^2)_{\bS^1}$ is the sum of a horizontal lift of the perfect Morse function $\varphi$ and function on the total space $(\bS^2)_{\bS^1}\to \C\bP^{\infty}$ which restricts to a perfect Morse function $f_{\bS^2} \colon \bS^2 \to \R$ on generic fibers. We require $\Crit(f_{\bS^2})$ to be away from $p,q\in\bS^2$.

We will denote the critical points of $f_{\infty}|_{(\bS^2)_{\bS^1}}$ by $\one_{\bS^2} \otimes \lambda^{\ell}$ and $\pt_{\bS^2} \otimes \lambda^{\ell}$, $\ell\geq 0$, where $\one_{\bS^2}$ and $\pt_{\bS^2}$ are the maximal and minimal critical points of $\phi$, respectively. We will also denote the critical points of $f_{\infty}$ on the non-diagonal components by $U \otimes \lambda^{\ell}$ and $V \otimes \lambda^{\ell}$, $\ell\ge 0$, respectively.

\begin{figure}[h]
	\begin{center}
		\includegraphics[scale=0.35]{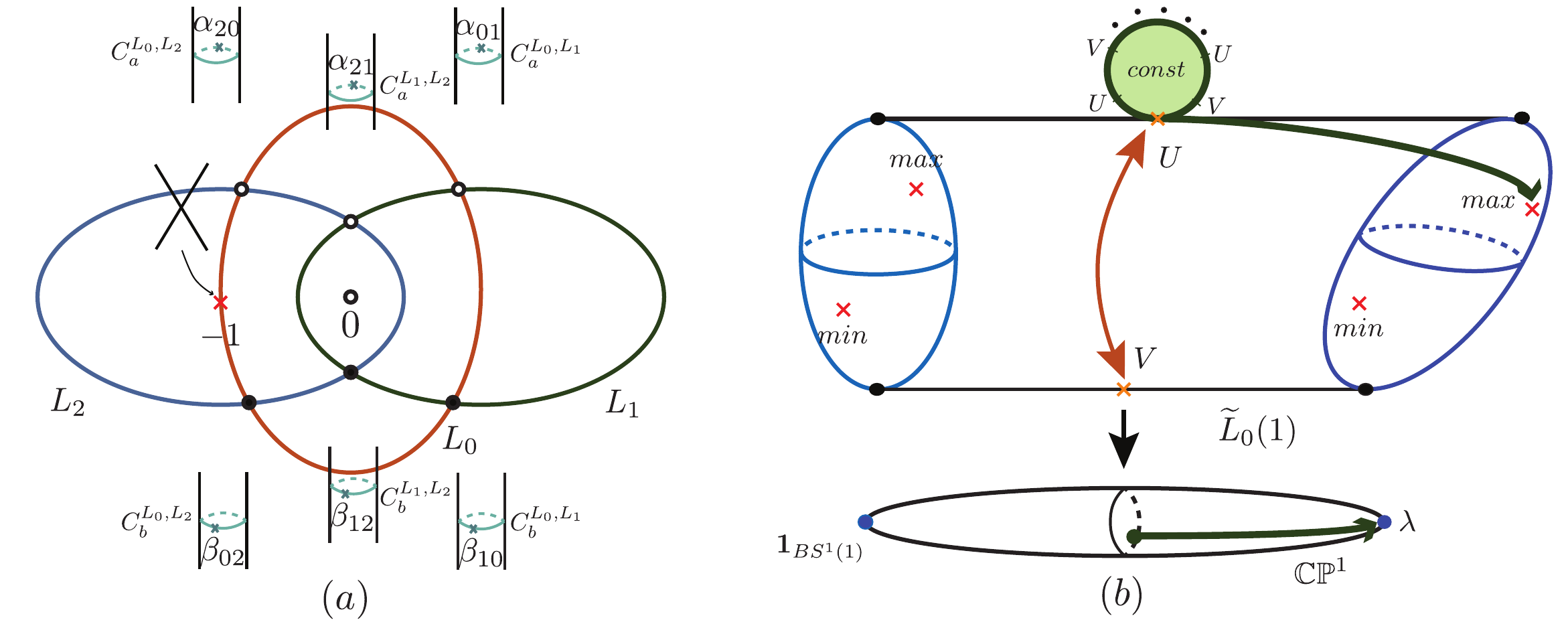}
		\caption{Comparison of three Lagrangians and constant disc contribution.}
		\label{fig_comparison}
	\end{center}
\end{figure}

Following Section~\ref{sec:partial_units} and \ref{sec:Morse-Bott}, we obtain an $A_\infty$-algebra $(CF_{\bS^1}^\bullet(L_0;\Lambda_0)^{\dagger},\fm^{\bS^1,\dagger})$ associated to $(L_0,\bS^1)$. We have
\[
CF_{\bS^1}^\bullet(L_0;\Lambda_0)^{\dagger}=CF^\bullet(L_0;\Lambda_0)\otimes_{\Lambda_0} \Lambda_0[\lambda],
\]
where $CF^\bullet(L_0;\Lambda_0)$ is the unital Morse complex associated to $f_{\infty}$. 
By abuse of notations, degree one immersed generators (graded with respect to the $\bS^1$-invariant holomorphic volume form $\Omega_{X}$) will simply be denoted by $U:=U\otimes 1$, and $V:=V\otimes 1$.
We now consider the formal deformations $b_0=uU+vV$ of $(L_0,\bS^1)$ for $(u,v)\in \Lambda^2_0$ with $\val (uv) >0$.

\begin{lemma}
	The formal deformations of $(L_0,\bS^1)$ by  $b_0=uU+vV$ are weakly unobstructed, i.e., $\mathfrak{m}_0^{\bS^1,\dagger, b_0}(1)\in H^{\bullet}_{\bS^1}(\pt;\Lambda_0)\cdot \one$.
\end{lemma}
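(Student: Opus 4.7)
The plan is to combine a grading constraint with the $\bS^1$-equivariant extension of the pinched-to-smooth-torus gluing isomorphism of \cite{HKL}.

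First, a grading and branch analysis cuts down the possible non-zero terms of $\fm_0^{\bS^1,\dagger,b_0}(1)$. Since $|U|=|V|=1$ (graded via $\Omega_X$) and $\fm_{k,(\alpha,\beta)}^{\bS^1,\dagger}$ is of degree $2-k-\mu(\beta)$, the output of $\fm_{k,(\alpha,\beta)}^{\bS^1,\dagger}(b_0^{\otimes k})$ has degree $2-\mu(\beta)$, which is even and, for a non-vanishing output, non-negative. Branch consistency of a pseudo-holomorphic polygon forces $\ev_0$ into the sector $R_{2j-k}$, where $j$ is the number of $U$-inputs. Every generator on $R_{\pm 1}$ is of the form $U\otimes\lambda^\ell$ or $V\otimes\lambda^\ell$, hence of odd degree, so the output must lie on the diagonal $R_0$; we must have $k$ even with $j=k/2$. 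Non-negativity of the minimal Maslov index of $L_0$ and the degree constraint further restrict to $\mu(\beta)\in\{0,2\}$: degree $0$ outputs are automatically multiples of $\bunit$, while degree $2$ outputs a priori lie in the span of $\bunit\otimes\lambda$ and $\pt_{\bS^2}\otimes 1$.

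The remaining and central task is to rule out the summand $\pt_{\bS^2}\otimes 1$ in the degree $2$ output. This is carried out via the $\bS^1$-equivariant analogue of the gluing technique from \cite{CHL18,HKL}. Since the Hamiltonian $\bS^1$-action $\eta\cdot(a,b,z)=(\eta a,\eta^{-1}b,z)$ fixes exactly the nodal point of $L_0$ and preserves each nearby smooth SYZ fiber $L_t$ of~\eqref{equ_specialLagtorusfib}, the non-equivariant gluing morphisms $\alpha\in CF^0(L_0,L_t)$, $\beta\in CF^0(L_t,L_0)$ can be promoted to $\bS^1$-equivariant morphisms $\alpha\in CF^0_{\bS^1}(L_0,L_t)$, $\beta\in CF^0_{\bS^1}(L_t,L_0)$. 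These satisfy the $\bS^1$-equivariant analogue of the isomorphism relations~\eqref{eq:iso_eq}, producing an isomorphism $(L_0,\bS^1,b_0)\simeq(L_t,\bS^1,b_t)$ in the equivariant Fukaya category for a suitable weak bounding cochain $b_t$ on $L_t$.

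By the product-type computation of Section~\ref{equ_computingteqdisc} (Corollary~\ref{cor:L_L_G} applied to $L_t\cong\bS^1\times\bS^1$ with the $\bS^1$-action free on the first factor and trivial on the second), $(L_t,\bS^1,b_t)$ is equivariantly weakly unobstructed with curvature in $H^\bullet_{\bS^1}(\mathrm{pt};\Lambda_0)\cdot\bunit_{L_t}$. Transporting this property along the equivariant gluing isomorphism via an $\bS^1$-equivariant version of Lemma~\ref{lemma:b} then forces $\fm_0^{\bS^1,\dagger,b_0}(1)\in H^\bullet_{\bS^1}(\mathrm{pt};\Lambda_0)\cdot\bunit$, as claimed. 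The hardest part is the construction in the previous paragraph: one must set up $\bS^1$-equivariant Kuranishi structures on the mixed moduli spaces defining $CF^\bullet_{\bS^1}(L_0,L_t)$ and $CF^\bullet_{\bS^1}(L_t,L_0)$, verify equivariant transversality compatibly with the admissible Morse--Smale data of Section~\ref{sec:Morsethyappr}, and check that the $A_\infty$-relations for an isomorphism survive equivariantly. Freeness of the $\bS^1$-action on $L_t$ together with the fixed-point structure on $L_0$ makes this feasible, but the bookkeeping over the finite-dimensional approximations $L(N)$ is delicate.
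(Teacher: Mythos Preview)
Your grading and branch analysis in the first paragraph is fine and matches the paper's reduction: the only possible obstruction beyond $H^\bullet_{\bS^1}(\pt;\Lambda_0)\cdot\bunit$ is a multiple of $\pt_{\bS^2}\otimes 1$, coming from the non-equivariant $\fm_0^{b_0}(1)$. The difference is in how this term is eliminated.

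The paper's argument is direct and does \emph{not} use gluing. One observes that $X$ carries the anti-symplectic involution $\sigma(a,b,z)=(\bar b,\bar a,\bar z)$, which preserves $L_0$ and swaps the immersed generators $U\leftrightarrow V$. Choosing orientations compatible with $\sigma$ as in \cite{HKL}, the contributions of constant polygons at the node cancel in pairs, giving $\fm_0^{b_0}(1)=0$ outright. Combined with the decomposition $\fm_0^{\bS^1,\dagger,b_0}(1)=\fm_0^{b_0}(1)\otimes 1 + W(u,v)\,\bunit\otimes\lambda$ (which follows from Proposition~\ref{prop:extension} exactly as in Proposition~\ref{prop:gen-m0}), this yields the lemma.

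Your proposed route has a genuine gap. The isomorphism relations~\eqref{eq:iso_eq} are formulated for $b_i$ that are already weak bounding cochains; in the paper the gluing between $(L_0,b_0)$ and $(L_t,b_t)$ is only established \emph{after} this lemma, precisely because one first needs to know $(L_0,b_0)$ is (weakly) unobstructed. Trying to reverse the order is circular. Lemma~\ref{lemma:b} does not help here: it is a degree-vanishing statement allowing one to ignore $\gunit$-terms in $\fm_1,\fm_2$ computations, not a mechanism for transporting weak unobstructedness. Even if you drop the weak-unobstructedness hypothesis and write down the curved $A_\infty$ relation $(\fm_1^{b_0,b_t})^2(\alpha)+\fm_2(\fm_0^{b_0}(1),\alpha)\pm\fm_2(\alpha,\fm_0^{b_t}(1))=0$, knowing $\fm_0^{b_t}(1)\in\Lambda_0\cdot\bunit_{L_t}$ only gives a constraint of the form $\fm_2(\fm_0^{b_0}(1),\alpha)\in\Lambda_0\cdot\alpha$, which does not by itself force the $\pt_{\bS^2}$-coefficient of $\fm_0^{b_0}(1)$ to vanish. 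The missing ingredient is exactly the involution symmetry.
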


\begin{proof}
	Since $L_0$ has minimal Maslov index two, $\mathfrak{m}_0^{\bS^1,\dagger, b_0}(1)$ can be expressed as 
	\[
	\mathfrak{m}_0^{\bS^1,\dagger, b_0}(1) =\mathfrak{m}_0^{b_0}(1) \otimes 1+ W(u,v) \cdot  \one_{\bS^2} \otimes \lambda
	\]
	by the same argument in the proof Proposition~\ref{prop:gen-m0}.
	Although $L_0$ does not bound any non-constant holomorphic discs, extra care is necessary for $\mathfrak{m}_0^{b_0}(1) $ as constant polygons might bubble off at the immersed loci. 
	We consider the anti-symplectic involution $\sigma \colon X \to X$ given by $(a,b, z) \mapsto (\overline{b}, \overline{a}, \overline{z})$, which preserves $L_0$ and swaps two immersed generators $U$ and $V$.
	In \cite{HKL}, by taking orientations respecting the involution, the constant polygon contributions are canceled out. 
	Thus $(L_0,b_0=uU+vV)$ is unobstructed, that is, $\mathfrak{m}_0^{b_0}(1)=0$. Therefore, 
	\[
	\mathfrak{m}_0^{\bS^1,\dagger, b_0}(1) = W(u,v) \cdot  \one_{\bS^2} \otimes \lambda.
	\]
\end{proof}

More precisely, the $\one_{\bS^2} \otimes \lambda$ appeared above is $\bunit_{\bS^2} \otimes \lambda=\blambda$ in the notation of Section \ref{sec:partial_units}.  By Lemma \ref{lem:G_unobstr}, the above $b_0$ can be modified (by adding terms involving $\gunit$) to $b_0^{\dagger}$ such that $\mathfrak{m}_0^{\bS^1,\dagger, b_0^{\dagger}}(1)= W^\wT(u,v) \cdot   \wlambda$.  By abuse of notations, we shall simply replace $b_0$ with $b_0^{\dagger}$ and denote $\wunit_{\bS^2}$ by $\one_{\bS^2}$, $\wunit=\wunit_{\bS^2}\otimes 1$ by $\one$, and $W^\wT$ by $W$.

\begin{remark}	
	This line of thoughts is similar to the work in \cite{HKL}.
	In this example $L_0$ does not bound non-constant holomorphic discs by maximal principle.  However, there are still highly interesting non-trivial contributions coming from constant polygons with corners at $U$ and $V$.  Note that for each constant polygon, $U$ and $V$ corners must occur in pairs (in order to return to the original branch).  This is the reason that the formal deformations of $(L_0,\bS^1)$ by $b_0=uU+vV$ are convergent for $(u,v)\in \Lambda_0^2$ with $\val (uv) >0$. 
\end{remark}

In order to compute $W(u,v)$, we compare $(L_0,\bS^1)$ with the $\bS^1$-equivariant Floer theory of Chekanov and Clifford tori $L_{1}$ and $L_{2}$ respectively.  
For the special Lagrangian torus fibration~\eqref{equ_specialLagtorusfib}, the Lagrangian torus over a point in the chamber $|z| < 1$ (resp. $|z| > 1$) is called a \emph{Chekanov} (resp. \emph{Clifford}) torus.
Since any pair of distinct fibers in~\eqref{equ_specialLagtorusfib} does \emph{not} intersect, they cannot be isomorphic. 
We apply Lagrangian isotopies to Chekanov torus and Clifford torus without intersecting the wall $z = 0$ to make them intersect as in Figure~\ref{fig_comparison} (a). 
Those isotoped Lagrangians are still $\Z$-graded and called Chekanov torus and Clifford torus respectively. 


The equivariant theory for the Lagrangian tori $L_{i}$ for $i=1,2$, is understood in the same way as in the toric case in Section~\ref{sec:toric}, except that in this local case they do not bound any non-constant holomorphic disc.  

We equip $L_1$ and $L_2$ with the \emph{non-trivial} spin structure along the $\bS^1$-orbit direction.  This will introduce extra systematic contributions to the orientation bundle of the moduli space of strips bounded by $L_1$ and $L_2$ and other Lagrangians.  The reason for doing this will be explained below.

We fix a perfect Morse function on each $L_i$ for $i=1,2$, such that the (compactified) unstable submanifolds of the degree one generators $X_i,Y_i$ are the hypertori chosen in \cite[Section~3.3]{HKL}.  The unstable submanifold of $X_i$ is transverse to the $\bS^1$-orbits in $L_i$.  We also have a Morse function on $(L_i)_{\bS^1}$ as in Section \ref{sec:toric}, whose critical points are labeled by $\eta \otimes \lambda^{\ell}$, $\ell\ge 0$ where $\eta$ is one of the following
\[
 \one_{L_i},\quad X_i, \quad Y_i, \quad X_i\wedge Y_i. 
\] 

We consider the formal deformations of  $L_i$ for $i=1,2$, by $b_i=x_i X_i+y_i Y_i$, $x_i,y_i\in \Lambda_+$. (Here $X_i:=X_i\otimes 1$ and  $Y_i:=Y_i\otimes 1$, by abuse of notations.) By Proposition~\ref{prop:gen-m0} and Lemma \ref{lem:h_i}, the equivariant disc potential of $(L_i,\bS^1)$ is given by
\[
W^{L_i}_{\bS^1} =  \lambda x_i.
\]
Since $L_i$ does not bound any non-constant holomorphic discs (particularly Maslov index zero discs), we have $W^{L^{(i)}}_{\bS^1} =(W^{L^{(i)}}_{\bS^1})^{\wT}$.

Each pair of Lagrangians $L_i$ and $L_j$, $0\le i<j\le 2$, intersect cleanly at two circles $C^{L_i,L_j}_\alpha$ and $C^{L_i,L_j}_\beta$ (see Figure \ref{fig_comparison}) on which $\bS^1$ acts freely. We fix a perfect Morse function on their clean intersections and denote by $CF^{\bullet}(L_i,L_j),$ and $CF^{\bullet}(L_j,L_i)$ the corresponding Lagrangian Floer intersection complexes. 

Let 
\begin{equation} \label{eq:iso012}
\alpha_{ij} \in CF^{0}(L_i,L_j), \quad \beta_{ji} \in CF^{0}(L_j,L_i)
\end{equation}
be the generators corresponding to the maximal critical points on $C^{L_i,L_j}_\alpha$ and $C^{L_i,L_j}_\beta$, respectively. Then, in the non-equivariant setting, we have the following gluing formula between the formal deformation space of the Lagrangians:

\begin{lemma}{\cite[Theorem~3.7]{HKL}}
The three formally deformed Lagrangians $(L_0, b_0)$, $(L_1, b_1)$ and $(L_2, b_2)$ are isomorphic, with the pairwise isomorphisms provided by \eqref{eq:iso012}, if and only if
	\begin{equation}\label{equ_isorel}
	\begin{cases}
	uv = 1-\exp (x_i) &\mbox{for $i = 1, 2$}, \\
	u = \exp (y_1), \\
	v = \exp (-y_2).
	\end{cases}
	\end{equation}
(The minus sign in front of $\exp(x_i)$ is due to the non-trivial spin structure we chose for $L_i$.)
\end{lemma}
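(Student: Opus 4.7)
\medskip
\noindent\textbf{Proof proposal.}
The plan is to verify the defining equations of a pair of mutually inverse isomorphisms in the Fukaya category, namely
\begin{align*}
&\fm_1^{b_i',b_j'}(\alpha_{ij}) = 0, \quad \fm_1^{b_j',b_i'}(\beta_{ji}) = 0,\\
&\fm_2^{b_i',b_j',b_i'}(\beta_{ji},\alpha_{ij}) = \bunit_{L_i}, \quad \fm_2^{b_j',b_i',b_j'}(\alpha_{ij},\beta_{ji}) = \bunit_{L_j},
\end{align*}
for the three ordered pairs $(i,j) \in \{(0,1),(0,2),(1,2)\}$, with $b_0' = uU + vV$ and $b_i' = x_iX_i + y_iY_i$ for $i=1,2$. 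By Lemma~\ref{lemma:b}, one may safely work with the primed bounding cochains and ignore all the homotopy unit corrections (the degree constraint $CF^{<0}(L_i,L_j) = 0$ holds because each clean intersection is a circle, with the lowest-degree generator having degree zero).

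The key geometric input is the classification of the relevant holomorphic polygons. The only non-constant $J$-holomorphic discs in the moduli problem come from the local model $X = \{ab = 1+z\} \subset \mathbb{C}^2 \times \mathbb{C}^\times$. Projecting via $\Pi \colon X \to \mathbb{C}^\times$ to the $z$-coordinate, each bigon with boundary on $L_i \cup L_j$ (for $i\ne j$) projects to an immersed bigon in $\mathbb{C}^\times$, and each triangle with boundary on $L_i\cup L_j \cup L_k$ projects to a triangle. The base projections of the Chekanov torus, Clifford torus, and singular fiber $L_0$ are explicit curves in $\mathbb{C}^\times$ meeting near $z=\pm 1$ (see Figure~\ref{fig_comparison}). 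I would enumerate the base bigons/triangles, lift them using the conic fibration, and compute each boundary weight by counting how often the boundary wraps around the $\bS^1$-orbits (contributing $\exp(y_i)$ factors) and passes through the toric hypertori $X_i$ (contributing $\exp(x_i)$ factors), as well as the number of corners at $U$ and $V$ (contributing $u$ and $v$ factors). The non-trivial spin structure on $L_1$ and $L_2$ accounts for the minus sign in $1-\exp(x_i)$.

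I expect the isomorphism equations between $(L_1,b_1)$ and $(L_2,b_2)$ to involve the two base bigons between the corresponding base circles, giving $\fm_1^{b_1',b_2'}(\alpha_{12}) = (\exp(y_1)-\exp(-y_2))\cdot(\text{generator})$, hence the relations $u = \exp(y_1)$ and $v = \exp(-y_2)$ once the $L_1 \leftrightarrow L_0$ and $L_0 \leftrightarrow L_2$ pairs are analyzed. For the pair $(L_i,b_i) \sim (L_0,b_0)$, the $\fm_1$ computation involves both an ordinary bigon and constant bigons at the node $r$; after cancelling via the anti-symplectic involution $\sigma$ as in~\cite{HKL}, one obtains the single relation $uv = 1-\exp(x_i)$. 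The $\fm_2$ equations $\fm_2(\beta,\alpha) = \bunit_{L_i}$ are automatic once the $\fm_1$ relations hold: they follow from the standard identification of the Floer cohomology of a pair of Hamiltonian isotopic Lagrangians with the self-Floer cohomology of one of them, together with the fact that the product of a generator with its Poincar\'e dual gives the unit.

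The main obstacle will be to carefully track the orientation and spin structure contributions in the count of constant polygons at the immersed node, since these are what produce the sign in $1 - \exp(x_i)$ rather than $1 + \exp(x_i)$. This computation parallels the argument in~\cite[Theorem~3.7]{HKL}, and the $\bS^1$-equivariance plays no role at this stage because all the relevant moduli spaces live in a single fiber of the Borel construction; the equivariant upgrade enters only later when we evaluate the equivariant disc potential on the resulting deformation space.
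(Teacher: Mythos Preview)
The paper does not give its own proof of this lemma: it is stated as a citation of \cite[Theorem~3.7]{HKL} and no argument is supplied. So there is nothing in the present paper to compare your proposal against; the authors simply import the result and immediately use it (via the next lemma) to deduce the equivariant gluing formula by a degree argument.

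Your outline is a reasonable sketch of how the cited proof in \cite{HKL} proceeds --- enumerate holomorphic strips and triangles via projection to the $z$-plane, lift through the conic fibration, and read off the boundary weights from intersections with the hypertori and from corners at $U,V$. A couple of points where your sketch is loose: the relations $u=\exp(y_1)$ and $v=\exp(-y_2)$ do not come from the $(L_1,L_2)$ pair alone but from the $(L_0,L_1)$ and $(L_0,L_2)$ isomorphism equations separately (you acknowledge this but the logic is slightly tangled); and the claim that the $\fm_2(\beta,\alpha)=\bunit$ equations are ``automatic'' from a Hamiltonian isotopy argument is not quite right here, since $L_0$ is immersed and not Hamiltonian isotopic to $L_1$ or $L_2$ --- in \cite{HKL} these $\fm_2$ identities are checked directly by exhibiting the relevant triangles. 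But since the paper itself defers entirely to \cite{HKL}, these are remarks about the cited source rather than about a proof you were meant to reproduce.
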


We note that the isomorphism between $(L_1, b_1)$ and $(L_2, b_2)$ is given by wall-crossing, i.e., we have $\exp (x_1)=\exp (x_2)$ and $\exp (y_1)=\exp (y_2)(1-\exp (x_1))$. This was used in the proof of \cite[Theorem~3.7]{HKL} to deduce that $uv = 1-\exp (x_i)$ for $i=1,2$.

We now turn to the gluing formula in the equivariant setting. For $i,j\in\{0,1,2\}$ and $i\ne j$, let $CF^{\bullet}_{\bS^1}(L_i,L_j)$ be the $\bS^1$-equivariant Lagrangian Floer intersection complex with 
\[
CF^{\bullet}_{\bS^1}(L_i,L_j)=CF^{\bullet}(L_i,L_j)\otimes_{\Lambda_0} \Lambda_0[\lambda].
\]

\begin{lemma}
$(L_0,\bS^1,b_0)$, $(L_1,\bS^1,b_1)$ and $(L_2,\bS^1,b_2)$ are isomorphic, with the pairwise isomorphisms provided by 
\begin{equation} \label{eq:iso_s1}
\alpha_{ij}\otimes 1 \in CF^{0}_{\bS^1}(L_i,L_j), \quad \beta_{ji}\otimes 1 \in CF^{0}_{\bS^1}(L_j,L_i),
\end{equation}
if and only if $\eqref{equ_isorel}$ holds.
\end{lemma}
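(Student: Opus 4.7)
The plan is to reduce the equivariant isomorphism check to the non-equivariant gluing formula~\eqref{equ_isorel} via two ingredients: the $\bS^1$-analog of Lemma~\ref{lemma:b} and Proposition~\ref{prop:disc classes}. First, the equivariant version of Lemma~\ref{lemma:b} (which follows from Theorem~\ref{thm:pull-out-lambda} combined with the same degree-counting argument used there) shows that the isomorphism equations
\[
\fm^{\bS^1,\dagger,b_i,b_j}_1(\alpha_{ij}\otimes 1)=0, \quad \fm^{\bS^1,\dagger,b_i,b_j,b_i}_2(\beta_{ji}\otimes 1,\alpha_{ij}\otimes 1)=\wunit_{L_i}+\fm^{\bS^1,\dagger,b_i}_1(\gamma_i),
\]
and their $(i,j)$-swapped counterparts, are equivalent to the corresponding equations with $b_i$ replaced by the non-$\gunit$ part $b_i'$, namely $b_0'=uU+vV$ and $b_i'=x_iX_i+y_iY_i$ for $i=1,2$.

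Next, by Proposition~\ref{prop:disc classes}, every non-constant $J_{X(N)}$-holomorphic polygon contributing to $\fm^{\bS^1,\dagger}_k$ is contained in a single fiber of $X(N)\to T^*B\bS^1(N)$, so the holomorphic contributions to the equivariant maps are identical to those entering the non-equivariant maps $\fm^{b_i',b_j'}_1(\alpha_{ij})$ and $\fm^{b_i',b_j',b_i'}_2(\beta_{ji},\alpha_{ij})$, tensored with $1\in\Lambda_0[\lambda]$. By \cite[Theorem~3.7]{HKL} these reproduce exactly~\eqref{equ_isorel}. It remains to analyze the purely Morse-theoretic contributions (constant discs glued via gradient trees) in the Borel construction, and show they can either be shown to vanish or be absorbed into the error term $\fm_1^{\bS^1,\dagger,b_i}(\gamma_i)$.

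The key observation is a degree argument in the spirit of Lemma~\ref{lem:h_i}: the inputs $\alpha_{ij}\otimes 1$, $\beta_{ji}\otimes 1$, and $b_i'$ carry no equivariant parameters, and any pearly tree producing a $\lambda^k$-term with $k\ge 1$ in the output would need to match a Morse generator of degree $1-2k$ (for $\fm^{\bS^1,\dagger}_1$) or $-2k$ (for $\fm^{\bS^1,\dagger}_2$). Since the Floer complexes $CF^{\bullet}(L_i,L_j)$ are non-negatively graded by the standing hypothesis of Lemma~\ref{lemma:b}, the $\fm^{\bS^1,\dagger}_1$-equations force all $\lambda^k$-corrections to vanish automatically. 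For $\fm^{\bS^1,\dagger}_2$, the only equivariant terms that could appear are multiples of $\gunit\otimes\lambda^k$, and these can be absorbed into $\fm_1^{\bS^1,\dagger,b_i}(\gamma_i)$ by a suitable choice of $\gamma_i$, using $\fm_1^{\bS^1,\dagger}(\gunit\otimes\lambda^k) = \wunit\otimes\lambda^k-(1-h)\bunit\otimes\lambda^k$ from~\eqref{eq:glambda} together with Theorem~\ref{thm:pull-out-lambda}.

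After the equivariant corrections are accounted for, the remaining non-equivariant equations are exactly those of~\cite[Theorem~3.7]{HKL}, which yields the equivalence with~\eqref{equ_isorel}. The main technical obstacle will be the careful bookkeeping of the constant-disc/Morse-tree contributions in the Borel construction and verifying that the $\gunit\otimes\lambda^k$ corrections to all four $\fm_2$-type equations can be consistently absorbed; this is analogous to the cancellation of contributions in Lemma~\ref{lem:h_i}, and as there, it will rely on the absence of non-constant Maslov-zero discs bounded by $L_0, L_1, L_2$ together with the explicit form of the admissible Morse-Smale pairs fixed in Section~\ref{sec:Morsethyappr}.
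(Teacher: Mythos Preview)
Your overall strategy---reducing to the non-equivariant setting via degree counting---is correct and matches the paper's approach, but you over-engineer it considerably and introduce an error along the way.

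The paper's proof is a one-line degree argument: since $\lambda$ has degree $2$, any $\lambda^k$-term ($k\geq 1$) in the output of $(\fm_1^{\bS^1})^{b_i,b_j}(\alpha_{ij}\otimes 1)$ would require a generator in $CF^{1-2k}(L_i,L_j)$, and any such term in the output of $(\fm_2^{\bS^1})^{b_i,b_j,b_i}(\beta_{ji}\otimes 1,\alpha_{ij}\otimes 1)$ would require a generator in $CF^{-2k}(L_i;\Lambda_0)$. Both vanish (the intersection Floer complex is non-negatively graded, and the only negative-degree generator in $CF^\bullet(L_i;\Lambda_0)$ is $\gunit$ in degree $-1$). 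Hence the equivariant operations on these specific inputs literally coincide with the non-equivariant ones, and the lemma follows immediately from \cite[Theorem~3.7]{HKL}.

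Your detours through the equivariant analog of Lemma~\ref{lemma:b}, Proposition~\ref{prop:disc classes}, and the analogy with Lemma~\ref{lem:h_i} are unnecessary: the deformations $b_i$ here already carry no $\gunit$-component, and the degree argument handles \emph{all} contributions (holomorphic and Morse-theoretic) simultaneously without any need to separate them. More seriously, your claim that ``the only equivariant terms that could appear [in $\fm_2^{\bS^1,\dagger}$] are multiples of $\gunit\otimes\lambda^k$'' is incorrect: $\gunit\otimes\lambda^k$ has degree $2k-1$, not $0$, so it cannot appear in the degree-$0$ output. The correct observation is simply that $CF^{-2k}(L_i;\Lambda_0)=0$ for $k\geq 1$, so no $\lambda$-corrections arise at all and there is nothing to absorb into $\fm_1^{\bS^1,\dagger,b_i}(\gamma_i)$.
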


\begin{proof}
This is simply due to degree reason. Since the equivariant parameter $\lambda$ has  degree two, we have
	\[
\begin{cases}
(\fm_1^{\bS^1,\dagger})^{b_i,b_j}(\alpha_{ij}\otimes 1)= \fm_1^{b_i,b_j}(\alpha_{ij}), \\
(\fm_1^{\bS^1,\dagger})^{b_j,b_i}(\beta_{ji}\otimes 1)= \fm_1^{b_j,b_i}(\beta_{ji}), \\
(\fm_2^{\bS^1,\dagger})^{b_i,b_j,b_i}(\beta_{ji}\otimes 1, \alpha_{ij}\otimes 1)=\fm_2^{b_i,b_j,b_i}(\beta_{ji}, \alpha_{ij}), \\
(\fm_2^{\bS^1,\dagger})^{b_j,b_i,b_j}(\alpha_{ij}\otimes 1, \beta_{ij}\otimes 1)=\fm_2^{b_j,b_i,b_j}(\alpha_{ij}, \beta_{ji}) .
\end{cases}
\]
This implies that the gluing formula remains the same in the equivariant setting. Note that we have implicitly identified $CF^{\bullet}(L_i;\Lambda_0)$ with $CF^{\bullet}(L_i;\Lambda_0)\otimes 1\subset CF^{\bullet}_{\bS^1}(L_i;\Lambda_0)$ in the above equations.
\end{proof}

Since the disc potentials are compatible with the gluing formulas (see \cite[Theorem~4.7]{CHL18}), we can compute $W^{L_0}_{\bS^1}$ using the above lemma and the fact that $W^{L_1}_{\bS^1} = \lambda x_1$.

\begin{theorem} \label{thm:equiv-imm}
	The $\bS^1$-equivariant potential $W^{L_0}_{\bS^1}$ of the immersed two-sphere $L_0$ is given by
	\[
	W^{L_0}_{\bS^1} =\lambda W(u,v)=\lambda \log (1-uv) = -\lambda \sum_{j=1}^\infty \frac{(uv)^j}{j}.
	\]
\end{theorem}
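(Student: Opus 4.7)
The plan is to leverage the gluing isomorphism just established to transport the known equivariant disc potential of the Chekanov (or Clifford) torus to $L_0$. By the previous lemma, the triples $(L_0,\bS^1,b_0)$ and $(L_1,\bS^1,b_1)$ are isomorphic in the equivariant Fukaya category provided the relations
\[
uv = 1-\exp(x_1), \qquad u = \exp(y_1)
\]
hold. So I would parametrize: solve the first relation for $x_1 = \log(1-uv)$, which is a well-defined element of $\Lambda_+$ since $\val(uv)>0$.

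The key input is the compatibility of disc potentials with isomorphisms in the Fukaya category, namely the analogue of \cite[Theorem~4.7]{CHL18} in our equivariant setting. Concretely, if $(L_i,G,b_i)$ and $(L_j,G,b_j)$ are isomorphic via $(\alpha_{ij},\beta_{ji})$, then $W^{L_i}_G(b_i) = W^{L_j}_G(b_j)$. The proof of this fact is formal: differentiating the two equations $\fm^{G,\dagger,b_i,b_j}_1(\alpha_{ij})=0$ and $\fm^{G,\dagger,b_j,b_i}_1(\beta_{ji})=0$ against the $A_\infty$-identity and using the defining relations for the isomorphism (together with the strict-unit property of $\wunit$ established in Section~\ref{sec:partial_units}) forces the two curvature scalars to coincide. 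I would just invoke this compatibility directly rather than reprove it.

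Combining these two ingredients yields the formula: the equivariant disc potential of $L_1$ is $W^{L_1}_{\bS^1}=\lambda x_1$ (computed as in Section~\ref{equ_computingteqdisc}; note that $L_1$ bounds no non-constant holomorphic discs locally in $X$, so the only contribution is the equivariant term from the unique flow line of Proposition~\ref{prop:uniqueflowline}). Substituting $x_1 = \log(1-uv)$ gives
\[
W^{L_0}_{\bS^1}(uU+vV) = \lambda\log(1-uv) = -\lambda\sum_{j=1}^\infty \frac{(uv)^j}{j}.
\]

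The main subtlety I would have to address carefully is that the gluing relation~\eqref{equ_isorel} was derived in \cite{HKL} in the non-equivariant setting, and I must justify that exactly the same relation governs isomorphism in the equivariant Fukaya category. The previous lemma already does this by a degree argument: the $\lambda$-degree of the equivariant parameter is $2$, so the morphism equations $(\fm^{\bS^1})^{b_i,b_j}_1(\alpha_{ij}\otimes 1)$ and $(\fm^{\bS^1})^{b_i,b_j,b_i}_2(\beta_{ji}\otimes 1,\alpha_{ij}\otimes 1)$ reduce purely to their non-equivariant counterparts since any $\lambda$-contribution would raise the output degree beyond that of $\alpha_{ij}\otimes 1$ or the units $\wunit_{L_i}$. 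So the equivariant gluing equations coincide with \eqref{equ_isorel}, and the substitution argument goes through. The only other bookkeeping is to confirm that $W^{\wT}=W$ for $L_0$ and $L_i$, which holds because the minimal Maslov index is at least two in both cases (so Lemma~\ref{lem:G_unobstr}'s correction factors $h,h_\lambda$ vanish).
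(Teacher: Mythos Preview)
Your proposal is correct and follows essentially the same approach as the paper. The only difference is packaging: you invoke the general compatibility of disc potentials with isomorphisms (\cite[Theorem~4.7]{CHL18}) as a black box, while the paper spells out that very computation explicitly---applying the $A_\infty$-identity to $((\fm_1^{\bS^1})^{b_1,b_0})^2(\alpha_{10}\otimes 1)$, using $\fm_1^{b_1,b_0}(\alpha_{10})=0$ to kill the left side, and the strict-unit property to reduce the right side to $\lambda W(u,v)\,\alpha_{10}=\lambda x_1\,\alpha_{10}$.
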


\begin{proof}
We can show this by either comparing $L_0$ with $L_1$ or $L_2$. Suppose we pick the former. By the $A_{\infty}$-relation, we have
\[
((\fm_1^{\bS^1,\dagger})^{b_1,b_0})^2(\alpha_{10}\otimes 1)=\lambda W(u,v) (\fm_2^{\bS^1,\dagger})^{b_0,b_0,b_1}(\alpha_{10}\otimes 1,\one_{\bS^2}\otimes 1)- \lambda x_1 (\fm_2^{\bS^1,\dagger})^{b_0,b_1,b_1}(\one_{L_1}\otimes 1,\alpha_{10}\otimes 1).
\]	
\eqref{equ_isorel} forces the (LHS) to vanish, and thus
	\[
\lambda W(u,v) (\fm_2^{\bS^1,\dagger})^{b_0,b_0,b_1}(\alpha_{10}\otimes 1,\one_{\bS^2}\otimes 1)=\lambda x_1 (\fm_2^{\bS^1,\dagger})^{b_0,b_1,b_1}(\one_{L_1}\otimes 1,\alpha_{10}\otimes 1).
	\]
	On the other hand, we have
	\[
	(\fm_2^{\bS^1,\dagger})^{b_0,b_0,b_1}(\alpha_{10}\otimes 1,\one_{\bS^2}\otimes 1)
	= (\fm_2^{\bS^1,\dagger	})^{b_0,b_1,b_1}(\one_{L_1}\otimes 1,\alpha_{10}\otimes 1) = \alpha_{10}\otimes 1.
	\]
	Therefore, 
	\[
	W(u,v) = x_1.
	\]
	Since $\exp (x_1) = 1-uv$, we get
	\[
	W(u,v)=   \log (1-uv) = - \sum_{j=1}^\infty \frac{(uv)^j}{j}.
	\]
\end{proof}

\begin{remark}
	Note that if we did not take the non-trivial spin structure, then the gluing formula reads $\exp (x_1) = -1+uv$.  However, the LHS lies in $1 + \Lambda_+$, while the RHS lies in $-1 + \Lambda_+$. Since the equation has no solution, the formally deformed Lagrangians $(L_1,\bS^1,b_1)$ equipped with the trivial spin structure cannot be isomorphic to $(L_0,\bS^1,b)$. Moreover, $(L_1,\bS^1,b_1)$ equipped with the two different spin structures are not quasi-isomorphic as objects of the Fukaya category. This is reason why we equipped $L_1$ with the non-trivial spin structure in the first place.
\end{remark}

\bibliographystyle{amsalpha}
	\bibliography{geometry}	
\end{document}